\newtheorem{theorem}{Theorem}[section]
\newtheorem{corollary}[theorem]{Corollary}
\newtheorem{lemma}[theorem]{Lemma}
\newtheorem{proposition}[theorem]{Proposition}
\newtheorem{remark}[theorem]{Remark}
\numberwithin{equation}{section}
\begin{document}
\title{Stationary solutions to\\ coagulation-fragmentation equations} 
%\thanks{}
\author{Philippe Lauren\c{c}ot}
\address{Institut de Math\'ematiques de Toulouse, UMR~5219, Universit\'e de Toulouse, CNRS \\ F--31062 Toulouse Cedex 9, France}
\email{laurenco@math.univ-toulouse.fr}

\keywords{coagulation, fragmentation, stationary solution, mass conservation}
\subjclass{45K05, 45M99}

\date{\today}

%%%%%%%%%%%%%%%%
%%%%%%%%%%%%%%%%
\begin{abstract}
Existence of stationary solutions to the coagulation-fragmentation equation is shown when the coagulation kernel $K$ and the overall fragmentation rate $a$ are given by $K(x,y) = x^\alpha y^\beta + x^\beta x^\alpha$ and $a(x)=x^\gamma$, respectively, with $0\le \alpha\le \beta \le 1$, $\alpha+\beta\in [0,1)$, and $\gamma> 0$. The proof requires two steps: a dynamical approach is first used to construct stationary solutions under the additional assumption that the coagulation kernel and the overall fragmentation rate are bounded from below by a positive constant. The general case is then handled by a compactness argument.
\end{abstract}
%%%%%%%%%%%%%%%%
%%%%%%%%%%%%%%%%

\maketitle

%
%     HEADLINES
%
\pagestyle{myheadings}
\markboth{\sc{Philippe Lauren\c cot}}{\sc{Stationary solutions to coagulation-fragmentation equations}}

%%%%%%%%%%%%%%%%
%%%%%%%%%%%%%%%%
\section{Introduction}\label{sec1}
%%%%%%%%%%%%%%%%
%%%%%%%%%%%%%%%%

The coagulation-fragmentation equation is a mean-field model describing the time evolution of the size distribution function $f$ of a system of particles increasing their size by pairwise merging or reducing it by splitting, no matter being loss during these processes. Denoting the coagulation kernel, the overall fragmentation rate, and the daughter distribution function by $K$, $a$, and $b$, respectively, the coagulation-fragmentation equation reads
\begin{subequations}\label{a1}
\begin{align}
\partial_t f & = \mathcal{C}f + \mathcal{F}f\ , \qquad (t,x) \in (0,\infty)^2\ , \label{a1a} \\
f(0) & = f^{in}\ , \qquad x\in (0,\infty)\ , \label{a1ai}
\end{align}
where the coagulation term $\mathcal{C}f$ is given by
\begin{equation}
\mathcal{C}f(x) := \frac{1}{2} \int_0^x K(y,x-y) f(x-y) f(y)\ \mathrm{d}y - \int_0^\infty K(x,y) f(x) f(y)\ \mathrm{d}y\ , \qquad x>0\ , \label{a1b}
\end{equation}
and the fragmentation term $\mathcal{F}f$ by
\begin{equation}
\mathcal{F}f(x) := - a(x) f(x) + \int_x^\infty a(y) b(x,y) f(y)\ \mathrm{d}y\ , \qquad x>0\ . \label{a1c}
\end{equation}
\end{subequations}
The first term in \eqref{a1b} accounts for the formation of particles of size $x>0$ as a consequence of the merging of two smaller particles with respective sizes $y\in (0,x)$ and $x-y$. The second term in \eqref{a1b} and the first term in \eqref{a1c} describe the depletion of particles of size $x>0$ due to coalescence with other particles and fragmentation, respectively. Finally, the breakup of a particle of size $y>x$ produces fragments of various sizes ranging in $(0,y)$, including fragments of size $x$ according to the distribution $b(x,y)$ as indicated by the second term in \eqref{a1c}. We further assume that there is no loss of matter during the breakage process, which amounts to require that $b$ satisfies
\begin{equation}
\int_0^y x b(x,y)\ \mathrm{d}x = y \ , \qquad y>0\ , \;\text{ and }\; b(x,y)=0\ , \qquad x>y>0\ . \label{a2}
\end{equation}
Since there is also no loss of matter during coalescence, the total mass of the system is expected to be invariant throughout time evolution; that is, 
\begin{equation}
\int_0^\infty x f(t,x)\ \mathrm{d}x = \int_0^\infty x f(0,x)\ \mathrm{d}x\ , \qquad t\ge 0\ . \label{a3}
\end{equation}
Though this property may fail to be true when, either the coagulation is too strong compared to the fragmentation, a phenomenon known as \textsl{gelation}, or the overall fragmentation rate $a$ is unbounded as $x\to 0$, a phenomenon known as \textsl{shattering}, both are excluded in the forthcoming analysis and we refer to \cite{EMP02, ELMP03, Jeon98, Laur00, Leyv83, LeTs81} and \cite{ArBa04, Fili61, McZi87}, respectively, for detailed information on these issues.

Our interest in this paper is rather related to the possible balance between coagulation and fragmentation, which are competing mechanisms. Indeed, the latter increases the number of particles and reduces the mean size of particles, while the former acts in the opposite direction. It is then of interest to figure out the outcome of this competition and, in particular, whether it could lead to stationary solutions. This is the issue we aim at investigating herein. 

The first example of coagulation-fragmentation equation featuring steady state solutions is the case of constant coefficients \cite{AiBa79}
\begin{equation}
\begin{split}
\partial_t f(t,x) & = \int_0^x \left[ f(t,x-y) f(t,y) - A_0 f(t,x) \right]\ \mathrm{d}y \\
& \quad - 2 \int_0^\infty \left[ f(t,x) f(t,y) - A_0 f(t,x+y) \right]\ \mathrm{d}y \ , \qquad (t,x)\in (0,\infty)^2\ ,
\end{split} \label{AB}
\end{equation}
which is obtained with the choice
\begin{equation}
K(x,y) = 2\ , \qquad a(x) = A_0 x\ , \qquad b(x,y) = \frac{2}{y}\ , \qquad 0<x<y\ , \label{a5}
\end{equation}
in \eqref{a1}. For any $z>0$, the function $Q_z$ defined by $Q_z(x) := A_0 e^{x\ln{z}}$, $x>0$, is a stationary solution to \eqref{AB} and $Q_z$ has finite total mass if and only if $z\in (0,1)$. The example \eqref{a5} is actually a particular case of coagulation and fragmentation coefficients satisfying the so-called \textit{detailed balance condition}: there are a non-negative symmetric function $F$ defined on $(0,\infty)^2$ and a non-negative function $Q$ defined on $(0,\infty)$ such that
\begin{subequations}\label{a6}
\begin{equation}
a(x) = \frac{1}{2} \int_0^x F(x_*,x-x_*)\ \mathrm{d}x_*\ , \qquad a(y) b(x,y) = F(x,y-x)\ , \qquad 0<x<y\ , \label{a6a}
\end{equation}
\begin{equation}
K(x,y) Q(x) Q(y) = F(x,y) Q(x+y)\ , \qquad (x,y)\in (0,\infty)^2\ . \label{a6b}
\end{equation}
\end{subequations}
Note that we recover \eqref{a5} from \eqref{a6} by setting $F\equiv 2 A_0$ and $Q\equiv A_0$. Thanks to \eqref{a6}, the equation \eqref{a1} reads
\begin{equation}
\begin{split}
\partial_t f(t,x) & = \frac{1}{2} \int_0^x \left[ K(x-y,y) f(t,x-y) f(t,y) - F(y,x-y) f(t,x) \right]\ \mathrm{d}y \\
& \quad - \int_0^\infty \left[ K(x,y) f(t,x) f(t,y) - F(x,y) f(t,x+y) \right]\ \mathrm{d}y \ , \qquad (t,x)\in (0,\infty)^2\ ,
\end{split} \label{DB}
\end{equation}
and $Q_z: x\mapsto Q(x) e^{x\ln{z}}$ is a stationary solution to \eqref{DB} for all $z\in (0,\infty)$. Whether $Q_z$ has finite total mass then depends on both the value of $z$ and the integrability properties of $Q$. We refer to \cite{Carr92, CadC94, LaMi02c, LaMi03} for a more detailed account on the various situations that may happen.

Coagulation and fragmentation coefficients satisfying the detailed balance condition \eqref{a6} are however far from being generic and different approaches have to be designed to investigate the existence of stationary solutions to \eqref{a1} when \eqref{a6} fails to hold. When the coagulation and fragmentation coefficients are given by 
\begin{equation}
K(x,y) = k_0 + k_1(x+y)\ , \qquad a(x) = A_0 x\ , \qquad b(x,y) = \frac{2}{y}\ , \qquad 0<x<y\ , \label{a7}
\end{equation}
the existence of a stationary solution to \eqref{a1} having total mass $\varrho>0$ is proved in \cite{DuSt96a} for all $\varrho>0$, the proof relying on a fixed point argument performed on the stationary version of \eqref{a1a}. It uses in an essential way the specific form of the coefficients and does not seem to extend to handle more general cases. Uniqueness and local stability of steady states are also established in \cite{DuSt96a}. In the same vein but with a completely different approach, a complete description of stationary solutions to \eqref{a1} is obtained in \cite[Theorem~5.1 \& Remark~5.2]{DLP17} when
\begin{equation}
K(x,y) = k_0 (xy)^{\lambda/2}\ , \qquad a(x) = A_0 x^{\lambda/2}\ , \qquad b(x,y) = \frac{2}{y}\ , \qquad 0<x<y\ , \label{a8}
\end{equation}
for some $\lambda\in [0,2]$, $k_0>0$, and $A_0>0$. Two steps are needed to obtain this result: first, when $\lambda=0$, $k_0=2$, and $A_0=1$, given an integrable stationary solution $f$ to \eqref{a1}, its Bernstein transform
$$
U(s) := \int_0^\infty \left( 1 - e^{-sx} \right) f(x)\ \mathrm{d}x\ , \qquad s\ge 0\ , 
$$
solves the integro-differential equation 
\begin{equation}
U(s)^2 + U(s) = \frac{2}{s} \int_0^s U(r)\ \mathrm{d}r\ , \qquad s>0\ , \qquad U(0) = 0\ . \label{a9}
\end{equation}
This equation turns out to have an explicit solution $U_\star$ which is the Bernstein transform of a non-negative function $f_\star\in L^1((0,\infty),(1+x)\mathrm{d}x)$ satisfying 
\begin{equation}
\int_0^\infty f_\star(x)\ \mathrm{d}x = \int_0^\infty x f_\star(x)\ \mathrm{d}x = 1\ , \label{a10}
\end{equation}
and any solution $U$ to \eqref{a9} is a dilation of $U_\star$; that is, there is $\mu>0$ such that $U(s) = U_\star(\mu s)$ for $s\ge 0$. Moreover, 
\begin{equation}
f_\star(x) \mathop{\sim}_{x\to 0} \frac{x^{-2/3}}{\Gamma(1/3)} \;\;\text{ and }\;\; f_\star(x) \mathop{\sim}_{x\to\infty} \frac{9}{8} \frac{x^{-3/2}}{\Gamma(1/2)} e^{-4x/27}\ .
\label{a11}
\end{equation}
In particular, $f_\star$ features an integrable singularity as $x\to 0$. To handle the case $\lambda>0$ in \eqref{a8}, it suffices to note that, if $f$ is a stationary solution to \eqref{a1} corresponding to coagulation and fragmentation rates given by \eqref{a7} for some $\lambda\in [0,2]$, $k_0>0$, and $A_0>0$, then $x\mapsto k_0 x^{\lambda/2} f(x)/2A_0$ is a stationary solution to \eqref{a1} corresponding to coagulation and fragmentation rates given by \eqref{a7} with $\lambda=0$, $k_0=2$, and $A_0=1$. Consequently, there is $\mu>0$ such that 
\begin{equation}
f(x) = \frac{2 A_0 \mu}{k_0} x^{-\lambda/2} f_\star(\mu x)\ , \qquad x\in (0,\infty)\ . \label{a12}
\end{equation} 
It readily follows from \eqref{a11} and \eqref{a12} that $f$ also features a singularity as $x\to 0$ which is not integrable if $\lambda>2/3$. However, the total mass of $f$ is finite for all $\lambda\in [0,2]$. Stability of stationary solutions is also investigated in \cite{DLP17} when $\lambda=0$, $k_0=2$, and $A_0=1$. 

The just described results only deal with very specific coagulation and fragmentation coefficients, and the approaches used in both cases exploit their particular structure. They are thus rather unlikely to extend to a wider setting. As far as we know, the only result handling a fairly general class of coagulation and fragmentation coefficients is to be found in \cite{EMRR05}, the coagulation and fragmentation coefficients being given by
\begin{subequations}\label{a13}
\begin{equation}
K(x,y) = x^{-\alpha} y^\beta + x^\beta y^{-\alpha}\ , \qquad (x,y)\in (0,\infty)^2\ , \label{a13a}
\end{equation}
and 
\begin{equation}
a(x) = a_0 x^\gamma\ , \qquad b(x,y) = \frac{1}{y} B\left( \frac{x}{y} \right)\ , \qquad 0<x<y\ , \label{a13b}
\end{equation}
where
\begin{equation}
(\alpha,\beta) \in [0,1]^2\ , \qquad \beta-\alpha \in [0,1)\ , \qquad \gamma\ge 0\ , \qquad a_0>0\ ,\label{a13c}
\end{equation} 
and 
\begin{equation}
B \;\text{ is a non-negative function in }\; L^1((0,1),(z+z^{-2\alpha}) \mathrm{d}z)\ .\label{a13d}
\end{equation}
\end{subequations}
Assuming further that $(\beta,\gamma)\ne (1,0)$ and $(\alpha,\gamma)\ne (0,0)$, the existence of a non-negative stationary solution to \eqref{a1} with total mass $\varrho$ is shown in \cite[Theorem~4.1]{EMRR05} for all $\varrho>0$. Furthermore, this stationary solution belongs to $L^1((0,\infty),x^m\mathrm{d}x)$ for all $m\ge -2\alpha$ and, under the additional assumption that $B\in L^\infty(0,1)$, it belongs to $L^p(0,\infty)$ for all $p\in [1,\infty)$. The approach developed to prove this result is of a completely different nature and actually relies on a dynamical approach. Roughly speaking, the basic idea is to find a suitable functional setting in which the initial value problem \eqref{a1} is well-posed, along with a closed and convex set $\mathcal{Z}$ which is compact for the associated topology and is positively invariant for the dynamical system associated to \eqref{a1} (in the sense that $f(t)\in\mathcal{Z}$ for all $t>0$ as soon as $f(0)\in\mathcal{Z}$). If a fixed point theorem is available in this functional setting, then a classical argument guarantees the existence of at least one stationary solution, see \cite[Theorem~16.5]{Am90}, \cite[Proof of Theorem~5.2]{GPV04}, and \cite[Theorem~1.2]{EMRR05}, for instance. Though this method merely gives the existence of a steady state solution without any information on uniqueness or stability, it is far more flexible than the previous ones and we shall partially employ it in the forthcoming analysis. Let us mention that it is also the cornerstone of the construction of mass-conserving self-similar solutions to the coagulation equation \cite{EMRR05, FoLa05, NiVe13b}. 

According to the previous description, no result on the existence of steady state solutions seems to be available for the classical coagulation kernel 
\begin{subequations}\label{a14}
\begin{equation} 
 K(x,y) = K_0 \left( x^\alpha y^\beta + x^\beta y^\alpha \right)\ , \qquad (x,y)\in (0,\infty)^2\ , \label{a14a}
\end{equation} 
with
\begin{equation}
0 \le \alpha \le \beta \le 1\ , \qquad \lambda := \alpha + \beta \in [0,1)\ , \label{a14b}
\end{equation}
\end{subequations}
and the purpose of this paper is to fill this gap for a rather large class of fragmentation coefficients. More precisely, we assume that there are
\begin{subequations}\label{a15}
\begin{equation} 
\gamma> 0\ , \qquad a_0>0\ , \qquad p_0>1\ , \label{a15a}
\end{equation}
and a non-negative function 
\begin{equation}
B\in L^1((0,1),z\mathrm{d}z)\cap L^{p_0}(0,1)\ , \qquad \int_0^1 z B(z)\ \mathrm{d}z = 1\ , \label{a15c}
\end{equation}
such that
\begin{equation}
a(x) = a_0 x^\gamma\ , \qquad b(x,y) = \frac{1}{y} B\left( \frac{x}{y} \right)\ , \qquad 0<x<y\ . \label{a15b}
\end{equation}
\end{subequations}
Note that the class of coagulation kernels \eqref{a14} includes the sum kernels corresponding to $\alpha=0$ and $\beta=\lambda\in [0,1)$ and the product kernels corresponding to $\alpha=\beta=\lambda/2\in [0,1/2)$. The constraint on $B$ in \eqref{a15c} stems from the conservation of matter \eqref{a2} during fragmentation events. Examples of daughter distribution functions satisfying \eqref{a15c} include the power-law breakup distribution
\begin{equation}
B(z) = B_{1,\nu}(z) := (\nu+2) z^\nu\ , \qquad z\in (0,1)\ , \qquad \nu>-1\ , \label{a15d}
\end{equation} 
and the parabolic breakup distribution
\begin{equation}
B(z) = B_{2,\nu}(z) := (\nu+2)(\nu+1) z^{\nu-1} (1-z)\ , \qquad z\in (0,1)\ , \qquad \nu>0\ . \label{a15e}
\end{equation}
Indeed, $B_{1,\nu}$ given by \eqref{a15d} satisfies \eqref{a15c} for any $p_0>1$ when $\nu\ge 0$ and for any $p_0\in (1,1/|\nu|)$ when $\nu\in (-1,0)$. Similarly, $B_{2,\nu}$ given by \eqref{a15e} satisfies \eqref{a15c} for any $p_0>1$ when $\nu\ge 1$ and $p_0\in (1,1/(1-\nu))$ when $\nu\in (0,1)$. 

Before stating the main result, let us introduce some notation. Throughout the paper, for $m\in\mathbb{R}$, we set
\begin{equation}
X_m := L^1((0,\infty),x^m\mathrm{d}x)\ , \qquad M_m(h) := \int_0^\infty x^m h(x)\ \mathrm{d}x\ , \quad h\in X_m\ , \label{a16}
\end{equation}
and denote the positive cone of $X_m$ by $X_m^+$. We also denote the space $X_m$ endowed with its weak topology by $X_{m,w}$. 

%%%%%%%%%%%%%%%%
\begin{theorem}\label{thm1}
Assume that the coagulation and fragmentation coefficients satisfy \eqref{a14} and \eqref{a15}. Given $\varrho>0$ there exists at least a stationary (weak) solution $\varphi \in X_1^+$ to \eqref{a1} with the following properties:
\begin{itemize}
	\item [\textbf{(s1)}] $M_1(\varphi)=\varrho$;
	\item [\textbf{(s2)}] there are $p_1\in (1,p_0)$ and $m_1\in (\lambda,1)$ such that 
	\begin{equation*}
	\varphi \in L^{p_1}((0,\infty),x^{m_1+\gamma}\mathrm{d}x) \cap \bigcap_{m>\lambda} X_m\ ;
	\end{equation*}
	\item[\textbf{(s3)}] for all $\vartheta\in \Theta_1 := \{ h\in W^{1,\infty}(0,\infty)\ :\ h(0)=0\}$, 
\begin{align*}
& \frac{1}{2} \int_0^\infty \int_0^\infty K(x,y) \left[ \vartheta(x+y) - \vartheta(x) - \vartheta(y) \right] \varphi(x)  \varphi(y)\ \mathrm{d}y\mathrm{d}x \\
& \qquad = \int_0^\infty a(y)  \varphi(y) \left[ \vartheta(y) - \int_0^y \vartheta(x) b(x,y)\ \mathrm{d}x \right]\ \mathrm{d}y\ . 
\end{align*}
\end{itemize}
\end{theorem}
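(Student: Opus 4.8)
The plan is to follow the two-step strategy announced in the abstract. First, for $\e\in(0,1)$, replace $K$ and $a$ by the regularized coefficients $K_\e:=K+\e$ and $a_\e:=a+\e$, keeping $b$ unchanged; then $(K_\e,a_\e,b)$ still satisfies the structural assumptions \eqref{a14}--\eqref{a15} and, in addition, $K_\e\ge\e$ and $a_\e\ge\e$. For such coefficients the Cauchy problem for \eqref{a1} is well posed and mass-conserving in $X_0^+\cap X_1^+$ (the sublinear growth of $K_\e$ as $x+y\to\infty$, inherited from $\lambda<1$, rules out gelation, while the boundedness of $a_\e$ near $x=0$ rules out shattering), and the associated solution semigroup is weakly continuous on bounded, uniformly integrable subsets of the invariant affine hyperplane $\{h\in X_1^+:M_1(h)=\varrho\}$. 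Since $\lambda<1$ and $\gamma>0$, testing the evolution equation against the truncated powers $x\mapsto(x\wedge R)^m$ and letting $R\to\infty$ shows that fragmentation dominates coagulation on the moments $M_m$, $m\ge1$, which yields an absorbing set for each of them; complemented by a weighted $L^{p_1}$ estimate (discussed below), this produces a closed, convex subset $\mathcal{Z}_\e$ of $\{h\in X_1^+:M_1(h)=\varrho\}$ which is weakly compact in $X_1$ and positively invariant for the flow. The classical dynamical-systems argument (as in \cite[Theorem~16.5]{Am90}, \cite[Proof of Theorem~5.2]{GPV04}, \cite[Theorem~1.2]{EMRR05}) then provides a stationary solution $\varphi_\e\in\mathcal{Z}_\e$ to the regularized equation, with $M_1(\varphi_\e)=\varrho$ and the analogue of \textbf{(s3)}.

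The second step is the passage to the limit $\e\to0$, which hinges on bounds for $\varphi_\e$ that are \emph{uniform} in $\e$. Inserting $\vartheta(x)=(x\wedge R)^m$ into the stationary identity \textbf{(s3)} written for $\varphi_\e$, using $(x+y)^m\ge x^m+y^m$ for $m\ge1$, and letting $R\to\infty$, one bounds the coagulation gain --- of homogeneity $\lambda+m<1+m$ --- by lower-order moments and absorbs it into the fragmentation loss $\propto M_{m+\gamma}(\varphi_\e)$ by interpolation against the mass constraint $M_1(\varphi_\e)=\varrho$ and Young's inequality; this gives $M_m(\varphi_\e)+M_{m+\gamma}(\varphi_\e)\le C_m$ for every $m>1$, uniformly in $\e$ (the remainders carrying a factor $\e$ are controlled, e.g.\ $\e M_0(\varphi_\e)^2$ stays bounded, so $\e M_0(\varphi_\e)\to0$). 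Next, estimating the weighted $L^{p_1}$ norm $\big(\int\varphi_\e^{p_1}x^{m_1}\,\mathrm{d}x\big)^{1/p_1}$ along the regularized flow, one controls the coagulation contribution by moments of the solution, while Hölder's inequality, combined with $b(x,y)=y^{-1}B(x/y)$ and $B\in L^{p_0}(0,1)$, controls the fragmentation gain; for an appropriate choice of $p_1\in(1,p_0)$ and $m_1\in(\lambda,1)$ the fragmentation loss $\propto\int\varphi_\e^{p_1}x^{m_1+\gamma}\,\mathrm{d}x$ dominates, yielding a bound for $\varphi_\e$ in $L^{p_1}((0,\infty),x^{m_1+\gamma}\,\mathrm{d}x)$ that is again uniform in $\e$. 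Reinjecting this bound into the stationary identity and into a Hölder estimate near $x=0$, one upgrades the moment control to $M_m(\varphi_\e)\le C_m$ for \emph{all} $m>\lambda$, uniformly in $\e$. Hence $\{\varphi_\e\}$ is bounded in $X_1$ (indeed $\|\varphi_\e\|_{X_1}=\varrho$) and uniformly integrable with respect to $x\,\mathrm{d}x$ --- tight as $x\to\infty$ thanks to the moments $m>1$, with no loss of mass as $x\to0$ thanks to the moments $m\in(\lambda,1)$, and with no concentration on compact subsets of $(0,\infty)$ thanks to the weighted $L^{p_1}$ bound --- so, by the Dunford--Pettis theorem, a subsequence satisfies $\varphi_{\e_j}\rightharpoonup\varphi$ in $X_{1,w}$. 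The weighted $L^{p_1}$ bound turns this into strong convergence in $X_m$, $m<1$, on every set $\{x\le R\}$, which suffices to pass to the limit in the bilinear coagulation term and in the linear fragmentation term of \textbf{(s3)}; uniform integrability yields $M_1(\varphi)=\varrho$, i.e.\ \textbf{(s1)}, weak lower semicontinuity of the norms transfers the above bounds to $\varphi$, i.e.\ \textbf{(s2)}, and the limit identity is exactly \textbf{(s3)}.

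The main obstacle is the uniform near-origin control --- the weighted $L^{p_1}$ estimate and the ensuing bounds on $M_m(\varphi_\e)$ for $m\in(\lambda,1)$. Indeed, for the kernel \eqref{a14} the coagulation rate \emph{vanishes} as $x\to0$, unlike the kernel of \cite{EMRR05}, which blows up there when $\alpha>0$; hence coagulation cannot by itself prevent an accumulation of mass at the origin, and it is precisely the extra integrability $B\in L^{p_0}(0,1)$ with $p_0>1$ that compensates for this in the $L^p$-type computation. Two further points require care: the passage to the limit in the quadratic coagulation term, which relies on upgrading the weak $X_1$-convergence to local strong convergence (once more via the weighted $L^{p_1}$ bound), and the verification that all the $\e$-dependent remainders produced by the regularization vanish as $\e\to0$.
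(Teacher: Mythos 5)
Your overall two-step architecture (regularize $K$ and $a$ by additive lower bounds, construct an invariant weakly compact convex set, apply the Tychonov-type fixed point theorem, then let $\e\to 0$ using $\e$-uniform estimates) agrees with the paper. However, two of the key steps are either unjustified or do not in fact work, and these are precisely the points the paper spends the most effort on.

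First, the uniform bounds on $M_m(\varphi_\e)$ for $m\in(\lambda,1)$ do not follow from the weighted $L^{p_1}$ bound by a H\"older estimate near the origin, as you claim. H\"older applied to $L_{m_1+\gamma,p_1}(\varphi_\e)\le C$ on $(0,1)$ gives $\int_0^1 x^m\varphi_\e\,\mathrm{d}x<\infty$ only when
$m>(m_1+\gamma+1-p_1)/p_1$. Optimizing over the admissible range of $(m_1,p_1)$ subject to the constraints $p_1<(m_1+1)/(\lambda+1)$ and $p_1\le(m_1+\gamma)/\gamma$ (which are needed to make the $L^{p_1}$ estimate itself work) shows that this lower threshold cannot be pushed below $\lambda+\gamma/p_0>\lambda$; since $p_0$ is only assumed $>1$, the range $m\in(\lambda,\lambda+\gamma/p_0]$ is left uncovered. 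In the paper these moments are controlled by a completely different mechanism (Lemma~\ref{lemb2}): for $m<1$ the coagulation contribution has the sign of $\chi_m\le 0$, and because $K_{j,\e}(x,y)\ge 2K_0(xy)^{\lambda/2}$ one obtains, via the convexity inequality $x^m+y^m-(x+y)^m\ge 2(1-m)xy(x+y)^{m-2}$ and a dyadic decomposition, a \emph{quadratic} absorption $-C\,M_m^2$ that yields a uniform-in-$\e$ bound. This coagulation absorption is the decisive ingredient you are missing; without it there is no $\e$-uniform control of moments of order near $\lambda$, and hence no proof of the full statement~\textbf{(s2)}.

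Second, your passage to the limit in the coagulation term invokes local strong convergence in $X_m$, $m<1$, deduced from weak $X_1$-convergence together with the $L^{p_1}$ bound. This inference is false: boundedness in a reflexive $L^{p}$-space upgrades weak $L^1$-convergence only to weak $L^p$-convergence, not to strong convergence in any $L^q$ (consider $1+\sin(nx)$ on a bounded interval). The paper sidesteps the issue entirely by factoring the kernel: writing
$\chi_\vartheta(x,y)\,x^\alpha y^\beta = \dfrac{\chi_\vartheta(x,y)}{x^{(2\beta+1-\lambda)/2}\,y^{(2\alpha+1-\lambda)/2}}\,x^{(1+\lambda)/2}\,y^{(1+\lambda)/2}$
and noting that the first factor is bounded by $2\|\vartheta'\|_\infty$ (since $\chi_\vartheta(x,y)\le 2\|\vartheta'\|_\infty\min\{x,y\}$, the two exponents are in $[0,1]$, and they sum to $1$), it suffices to have weak convergence $\varphi_{\e_k}\rightharpoonup\varphi$ in $X_{(1+\lambda)/2}$ to pass to the limit in the product. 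Your argument as written does not justify the limit of the quadratic term.

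Two further, less critical, issues deserve mention. To make the moment and weighted $L^p$ computations rigorous, the paper first truncates the coefficients at a level $j$ (giving $K_{j,\e}$ bounded), derives the a priori estimates at that level, and only then lets $j\to\infty$; testing directly against $(x\wedge R)^m$ and $\varphi^{p-1}$ for unbounded coefficients requires an approximation step you have omitted. Finally, the uniform-in-$\e$ bound on $L_{m_1+\gamma,p_1}(\varphi_\e)$ (the $+\gamma$ in the weight is what matters) is not obtained directly; the direct Gronwall bound (\ref{b15}) scales like $\e^{-2}$. The paper instead derives a time-averaged inequality (\ref{b16}) in which the fragmentation loss $\Lambda_j$ dominates, and exploits stationarity of $\varphi_\e$ — $\frac1t\int_0^t L_{m_1+\gamma,p_1}(\varphi_\e)\,\mathrm{d}s = L_{m_1+\gamma,p_1}(\varphi_\e)$ — to convert this into the $\e$-uniform bound (\ref{c70}) by letting $t\to\infty$. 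Your sketch gives the correct final inequality at the formal level, but the route to making it $\e$-uniform and rigorous hinges on this time-averaging step.
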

%%%%%%%%%%%%%%%%

It is worth pointing out here that Theorem~\ref{thm1}~\textbf{(s2)} does not exclude a non-integrable singularity of $\varphi$ as $x\to 0$, a situation which may indeed occur, as we shall see below. This feature is not encountered for the coagulation and fragmentation coefficients given by \eqref{a13} and considered in \cite{EMRR05} when $\alpha<0$, as the unboundedness of the coagulation kernel for small sizes implies the vanishing of the stationary solution as $x\to 0$. This possible singular behaviour for small sizes is actually the main difficulty to be overcome in the analysis carried out below and requires a more involved approach, which we describe now.

The proof of Theorem~\ref{thm1} is carried out in two steps. We fix $\varrho>0$. Using the dynamical approach already alluded to, given $\varepsilon\in (0,1)$, we first construct a stationary solution $\varphi_{\varepsilon}\in X_1$ to 
\begin{equation}
\begin{split}
\partial_t f & = \mathcal{C}_\varepsilon f + \mathcal{F}_\varepsilon f\ , \qquad (t,x)\in (0,\infty)^2\ , \\
f(0) & = f^{in}\ , \qquad x\in (0,\infty)\ , 
\end{split} \label{CFe}
\end{equation}
satisfying $M_1(\varphi_{\varepsilon})=\varrho$, where the coagulation and fragmentation operators $\mathcal{C}_\varepsilon$ and $\mathcal{F}_\varepsilon$ are given by \eqref{a1b} with $K_\varepsilon := K+2 \varepsilon K_0$ instead of $K$ and \eqref{a1c} with $a_\varepsilon := a + a_0 \varepsilon^2$ instead of $a$, respectively. For this choice of coagulation and fragmentation coefficients, we actually build a closed convex and sequentially weakly compact subset $\mathcal{Z}_\varepsilon$ of $X_1$ such that solutions to \eqref{CFe} starting from an initial condition in $\mathcal{Z}_\varepsilon$ remain in $\mathcal{Z}_\varepsilon$ for all positive times. Recalling that, according to the Dunford-Pettis theorem, sequential weak compactness in $X_1$ requires to prevent concentration and escape of mass for small and large sizes, finding $\mathcal{Z}_\varepsilon$ amounts to derive time-independent estimates in $X_{m_0}\cap X_m\cap L^{p_2}(0,\infty)$ for some suitably chosen $m_0<1<m$ and $p_2>1$. While some of the moment estimates can be obtained directly for $\varepsilon=0$ (Section~\ref{sec2.1}), it does not seem to be possible to derive uniform integrability estimates without the positive lower bounds on $K_\varepsilon$ and $a_\varepsilon$ (Section~\ref{sec2.3}). Besides the construction of $\mathcal{Z}_\varepsilon$ (Section~\ref{sec3.4}), we also show the well-posedness of \eqref{CFe} in Section~\ref{sec3.1}, as well as the continuous dependence of solutions to \eqref{CFe} in $X_{1,w}$ with respect to the initial condition (Section~\ref{sec3.3}). To justify rigorously the computations performed in Section~\ref{sec2}, an additional approximation is needed and we shall actually work with truncated versions of $K_\varepsilon$ and $a_\varepsilon$. Thanks to this analysis, it remains to apply \cite[Theorem~1.2]{EMRR05} to obtain the existence of a stationary solution $\varphi_\varepsilon \in \mathcal{Z}_\varepsilon$ to \eqref{CFe} (Section~\ref{sec3.5}). To complete the proof of Theorem~\ref{thm1}, we are left with taking the limit $\varepsilon\to 0$. To this end, we realize that, since we have payed special attention to the dependence on $\varepsilon$ of the estimates derived in Section~\ref{sec2}, there is a sequentially weakly compact subset $\mathcal{Z}$ in $X_1$ such that $\mathcal{Z}_\varepsilon\subset \mathcal{Z}$ for all $\varepsilon\in (0,1)$, see Section~\ref{sec4}. Consequently, there are $\varphi\in\mathcal{Z}$ and a subsequence $(\varphi_{\varepsilon_k})_{k\ge 1}$ of $(\varphi_\varepsilon)_{\varepsilon\in (0,1)}$ such that $\varphi_{\varepsilon_k}\rightharpoonup \varphi$ in $X_1$. We finally combine this convergence with the properties of $\mathcal{Z}$ and $(\varphi_{\varepsilon_k})_{k\ge 1}$ to prove that $\varphi$ is a stationary weak solution to \eqref{a1} as described in Theorem~\ref{thm1} (Section~\ref{sec4}).

\medskip

Theorem~\ref{thm1} only provides the finiteness of the moments of $\varphi$ of order larger than $\lambda$ and thus does not provide much information on its behaviour for small sizes. In fact, the small size behaviour described in Theorem~\ref{thm1}~\textbf{(s2)} does not seem to be accurate. Indeed, formal asymptotics indicate that, if $\varphi$ is a stationary weak solution to \eqref{a1} satisfying the properties~\textbf{(s1)}-\textbf{(s3)} stated in Theorem~\ref{thm1} and 
\begin{subequations}\label{tt}
\begin{equation}
\varphi(x) \sim A x^{-\tau} \;\text{ as }\; x\to 0 \label{t0}
\end{equation}
for some $A>0$ and $\tau>0$, then $\tau$ can be identified and depends on the values of $\alpha$, $\beta$, $\gamma$, and possibly on $B$. Specifically,
\begin{itemize}[label=$-$]
	\item if $\gamma>\alpha$, then
	\begin{equation}
	\tau = \alpha + 1 + m_\star\ , \label{t1}
	\end{equation}
	where $m_\star$ is defined in \eqref{a17} below;
	\item if $\gamma=\alpha<\beta$, then 
	\begin{equation}
	\tau = \alpha+1\ ; \label{t2}
	\end{equation}
	\item if $\gamma=\alpha=\beta$ and $B=B_{1,\nu}$, see \eqref{a15d}, then
	\begin{equation}
	\tau = \alpha + \frac{2}{\nu+3} < \alpha+1\ ; \label{t3}
	\end{equation}
	\item if $\gamma<\alpha$, then
	\begin{equation}
	\tau = \lambda+1-\gamma\ . \label{t4}
	\end{equation}
\end{itemize}
\end{subequations}
In particular, the prediction \eqref{t3} perfectly agrees with \eqref{a12} when $\gamma=\alpha=\beta=\lambda/2\in [0,1/2)$ and $\nu=0$ ($B=B_{1,0}$). On the one hand, \eqref{tt} implies that $\varphi$ may have a non-integrable singularity as $x\to 0$ and, in particular, it is not expected to belong to $X_\alpha$ when $\gamma<\alpha$. On the other hand, different behaviours are predicted in \eqref{tt}, which vary according to the sign of $\gamma-\alpha$, and seem to be sensitive to the behaviour of $B(z)$ as $z\to 0$ when $\gamma=\alpha=\beta$. We shall not attempt a complete proof of \eqref{tt} herein but, as a first step in that direction, we provide additional integrability properties of $\varphi$ which complies with \eqref{tt}.

%%%%%%%%%%%%%%%%
\begin{proposition}\label{prop2}
Consider $\varrho>0$ and let $\varphi$ be a stationary weak solution to \eqref{a1} satisfying the properties~\textbf{(s1)}-\textbf{(s3)} stated in Theorem~\ref{thm1}.
\begin{itemize}
\item[\textbf{(m1)}] If $\gamma>\alpha$, then $\varphi\in X_m$ for any $m>\alpha+ m_\star$, where
\begin{equation}
m_\star := \inf\left\{ m\in \mathbb{R}\ : \ B\in L^1((0,1),z^m\mathrm{d}z) \right\} \le \frac{1-p_0}{p_0} < 0\ . \label{a17}
\end{equation}
Moreover, if $m_\star>-\infty$ and $B\notin L^1((0,1),z^{m_\star}\mathrm{d}z)$, then $\varphi\notin X_{\alpha+m_\star}$;
\item[\textbf{(m2)}] if $\gamma=\alpha<\beta$, then $\varphi\in X_m$ for any $m\ge \beta$;
\item[\textbf{(m3)}] if $\gamma=\alpha=\beta$, then $\varphi\in X_m$ for any $m\ge \alpha$;
\item[\textbf{(m4)}] if $\gamma<\alpha$, then $\varphi\in X_m$ for any $m>\lambda-\gamma$.
\end{itemize}
\end{proposition}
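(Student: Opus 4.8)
The plan is to probe the weak formulation \textbf{(s3)} with regularizations of power functions and to combine the resulting moment identities with the a priori bounds of \textbf{(s2)}; the thresholds $\alpha+m_\star,\beta,\alpha,\lambda-\gamma$ in \textbf{(m1)}--\textbf{(m4)} are exactly the values $\tau-1$ predicted by the formal asymptotics \eqref{t1}--\eqref{t4}, which is a useful guide. Throughout I set $\mu_\star:=\inf\{m\in\RR : \varphi\in X_m\}$, so $\mu_\star\le\lambda$ by \textbf{(s2)} and $\varphi\in X_m$ for every $m>\mu_\star$ while all moments of sufficiently large order are finite; I also record first, by testing \textbf{(s3)} against $\vartheta(x)=\min\{(x-R)_+,1\}$ and letting $R\to\infty$, that $\varphi$ is not compactly supported, hence $\int_1^\infty x^\beta\varphi(x)\,\mathrm{d}x>0$. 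For fixed $m\in(m_\star,1)$, inserting into \textbf{(s3)} the functions $\vartheta_{m,j}\in\Theta_1$ that are affine on $(0,1/j)$, equal $x^m$ on $(1/j,j)$ and are suitably truncated on $(j,\infty)$, and passing to the limit $j\to\infty$ (each integrand keeps a fixed sign because $\vartheta_{m,j}$ is subadditive with $x\mapsto\vartheta_{m,j}(x)/x$ nonincreasing), I obtain the moment identity
\begin{equation*}
\frac12\int_0^\infty\!\!\int_0^\infty K(x,y)\bigl[x^m+y^m-(x+y)^m\bigr]\varphi(x)\varphi(y)\,\mathrm{d}y\,\mathrm{d}x=a_0\Bigl(\int_0^1 z^mB(z)\,\mathrm{d}z-1\Bigr)M_{m+\gamma}(\varphi),
\end{equation*}
both sides lying in $[0,\infty]$. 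Using the elementary bounds $c_m\min\{x,y\}^m\le x^m+y^m-(x+y)^m\le 2\min\{x,y\}^m$ (with $c_m>0$ for $m$ bounded away from $1$, and $c_m=1$ for $m\le0$), $K(x,y)\ge K_0x^\alpha y^\beta$ on $\{x<y\}$, and then restricting to $\{x<1<y\}$, this gives
\begin{equation*}
c_mK_0\Bigl(\int_1^\infty x^\beta\varphi\Bigr)\int_0^1 x^{\alpha+m}\varphi(x)\,\mathrm{d}x\ \le\ a_0\Bigl(\int_0^1 z^mB-1\Bigr)M_{m+\gamma}(\varphi),
\end{equation*}
and symmetrically a matching upper bound of the left-hand side of the identity by a constant times $M_\alpha(\varphi)M_{\beta+m}(\varphi)+M_\beta(\varphi)M_{\alpha+m}(\varphi)$.

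For \textbf{(m1)} ($\gamma>\alpha$) I would argue directly from the displayed inequality: finiteness of $M_{m+\gamma}(\varphi)$ forces finiteness of $M_{\alpha+m}(\varphi)$, i.e.\ a strict gain of $\gamma-\alpha>0$ in admissible moment order; equivalently, $m_\star<m<1$ and $m+\gamma>\mu_\star$ imply $m\ge\mu_\star-\alpha$. If $\mu_\star>\alpha+m_\star$ then the interval $\bigl(\max\{m_\star,\mu_\star-\gamma\},\mu_\star-\alpha\bigr)$ is nonempty (note $\mu_\star-\alpha\le\beta<1$ when $\beta<1$, and $\mu_\star\le\lambda<1$ otherwise), and any $m$ in it yields $m\ge\mu_\star-\alpha$ together with $m<\mu_\star-\alpha$: contradiction. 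Hence $\mu_\star\le\alpha+m_\star$, which is the first assertion. For the optimality statement, if $B\notin L^1((0,1),z^{m_\star}\mathrm{d}z)$ but, for contradiction, $\varphi\in X_{\alpha+m_\star}$, then $\mu_\star\le\alpha+m_\star<\alpha\le\beta$, so all moments entering the two-sided bound stay finite and bounded as $m\downarrow m_\star$, whereas the right-hand side of the identity blows up because $\int_0^1 z^mB\to+\infty$ while $M_{m_\star+\gamma}(\varphi)\in(0,\infty)$ (here $m_\star+\gamma>\mu_\star$ since $\gamma>\alpha$): contradiction, so $\varphi\notin X_{\alpha+m_\star}$.

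For \textbf{(m2)}--\textbf{(m4)} ($\gamma\le\alpha$) the gain $\gamma-\alpha$ is no longer positive and the identity above becomes inconclusive, so I would switch to the family of truncated first moments $\vartheta_\delta(x):=\min\{x,\delta\}\in\Theta_1$, $\delta\in(0,1)$. Inserting $\vartheta_\delta$ into \textbf{(s3)} produces the ``particle-number balance across size $\delta$'': the coagulation defect $\tfrac12\iint K\,[\vartheta_\delta(x)+\vartheta_\delta(y)-\vartheta_\delta(x+y)]\varphi\varphi$ equals the fragmentation defect $a_0\int_\delta^\infty y^\gamma\varphi(y)\bigl[(y-\delta)\int_0^{\delta/y}zB+\delta\int_{\delta/y}^1(1-z)B\bigr]\mathrm{d}y$, both nonnegative. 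Letting $\delta\to0$ first yields $K_0M_\alpha(\varphi)M_\beta(\varphi)=a_0(\int_0^1 B-1)M_\gamma(\varphi)$, whence $\mu_\star\le\gamma$ or $\mu_\star\ge\alpha$; the latter (combined with $\mu_\star\le\lambda$) is the remaining case. I would then assume that $\mu_\star$ exceeds the claimed threshold ($\lambda-\gamma$ for \textbf{(m4)}, $\beta$ for \textbf{(m2)}, $\alpha$ for \textbf{(m3)}), so that one sits in the ``singular regime'' $M_\alpha(\varphi)=M_\beta(\varphi)=\infty$, and compare the scalings in $\delta$ of the two sides: the fragmentation defect is then of order $\delta\int_\delta^\infty y^\gamma\varphi$, i.e.\ $\asymp\delta^{1+\gamma-\mu_\star}$ up to a strictly smaller contribution from the production of sub-$\delta$ fragments (controlled through $B\in L^{p_0}(0,1)$), while the coagulation defect, bounded below by the mass flux $\delta\iint_{\delta<x<y}K\varphi\varphi$, is of order $\delta^{\lambda+1-2\mu_\star}$; since $\lambda+1-2\mu_\star<1+\gamma-\mu_\star$ in that regime, equality of the two defects as $\delta\to0$ is impossible. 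This gives $\mu_\star\le\lambda-\gamma$ (\textbf{(m4)}), and $\mu_\star\le\beta$, $\mu_\star\le\alpha$ for \textbf{(m2)}, \textbf{(m3)}.

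The main obstacle is precisely that last scaling analysis. The $\delta$-upper bounds (for the fragmentation defect) follow from the a priori information by straightforward Hölder/interpolation using $M_m(\varphi)<\infty$ for $m>\mu_\star$, $M_1(\varphi)<\infty$, $\varphi\in L^{p_1}((0,\infty),x^{m_1+\gamma}\mathrm{d}x)$ and $B\in L^1((0,1),z\,\mathrm{d}z)\cap L^{p_0}(0,1)$; the delicate part is the matching $\delta$-lower bound $\delta\iint_{\delta<x<y}K\varphi\varphi\gtrsim\delta^{\lambda+1-2\mu_\star}$ for the coagulation flux, which asks one to quantify how singular $\varphi$ actually is near $x=0$ without any circular appeal to an assumed profile $\varphi(x)\sim Ax^{-\tau}$ — this I expect to require a reverse-Hölder or energy-type argument for the quantity $\int_\delta^\delta\!\!\!\!{}^{,2\delta}\varphi$ on dyadic scales, together with a separate treatment of the borderline exponents ($m=\mu_\star$, and $B\notin L^1(z^{m_\star}\mathrm{d}z)$ in \textbf{(m1)}). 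A cleaner implementation, which I would try first, is to rearrange the $\vartheta_\delta$-identity into a self-improving (Carleman-type) inequality for the tail $\delta\mapsto\int_0^\delta x^{\lambda+1}\varphi(x)\,\mathrm{d}x$ and iterate it down to the critical exponent.
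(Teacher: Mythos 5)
Your treatment of \textbf{(m1)} is essentially sound and takes a route comparable to the paper's: you probe \textbf{(s3)} with a regularization of $x\mapsto x^m$ (the paper uses $\zeta_{m,\delta}(x)=x\max\{x,\delta\}^{m-1}$, which for $m<0$ is already in $\Theta_1$ with no large-$x$ truncation needed), bound $-\chi$ from below by $\min\{x,y\}^m\mathbf{1}_{\{x,y>\delta\}}$ and $-N$ from above by $\mathfrak{b}_mx^m\mathbf{1}_{\{x>\delta\}}$, and close with a moment bootstrap. The paper keeps $\delta$ fixed and closes the resulting inequality $K_0\bigl(\int_\delta^\infty y^\beta\varphi\bigr)\int_\delta^\infty x^{\alpha+m}\varphi\le a_0\mathfrak{b}_m\int_\delta^\infty x^{\gamma+m}\varphi$ by splitting the right side at a well-chosen scale $r_\delta$, having first proved the preliminary fact $\varphi\in X_\alpha$ (Lemma~\ref{lems1}); your bootstrap directly on $\mu_\star=\inf\{m:\varphi\in X_m\}$ avoids that preliminary step and is a clean alternative, though it does require justifying the passage $j\to\infty$ to the moment inequality when both sides could a priori be infinite. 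Your contradiction for the endpoint statement (letting $m\downarrow m_\star$) mirrors the paper's (the paper fixes $m=m_\star$ and sends $\delta\to0$ using $\int_{\sqrt\delta}^1 z^{m_\star}B\to\infty$).

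For \textbf{(m2)} and \textbf{(m3)}, however, you are missing a short and decisive observation, and your replacement strategy does not close. The paper tests \textbf{(s3)} with $\zeta_{0,\delta}(x)=\min\{1,x/\delta\}$ (your $\vartheta_\delta/\delta$) and obtains, before letting $\delta\to0$, the truncated inequality $K_0\bigl(\int_\delta^\infty x^\alpha\varphi\bigr)\bigl(\int_\delta^\infty y^\beta\varphi\bigr)\le a_0\mathfrak{b}_0\int_\delta^\infty x^\gamma\varphi$. When $\gamma=\alpha$, the factor $\int_\delta^\infty x^\alpha\varphi$ appears on both sides and cancels (it is strictly positive for small $\delta$ since $\varphi\not\equiv0$), yielding the unconditional bound $\int_\delta^\infty y^\beta\varphi\le a_0\mathfrak{b}_0/K_0$; letting $\delta\to0$ gives $\varphi\in X_\beta$ at once, and interpolation with \textbf{(s2)} finishes \textbf{(m2)}--\textbf{(m3)}. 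Your version instead passes to the $\delta\to0$ limit first, obtaining $K_0M_\alpha M_\beta=a_0(\mathfrak{b}_0-1)M_\gamma$ as an identity in $[0,\infty]$, which is vacuous precisely in the singular regime you then try to exclude by a $\delta$-scaling analysis. That scaling analysis is the real gap, and you flag it yourself: the lower bound $\delta\iint_{\delta<x<y}K\varphi\varphi\gtrsim\delta^{\lambda+1-2\mu_\star}$ for the coagulation flux is not established and cannot be obtained without a quantitative dyadic argument. In the paper this is handled, for \textbf{(m4)}, by exactly the kind of device you anticipate: test \textbf{(s3)} with $\zeta_i(x)=x\max\{x,y_i\}^{m-\gamma}$, $y_i=i^{-\omega}$, obtain $-\chi_{\zeta_i}\ge 2(1+\gamma-m)xy(x+y)^{m-\gamma-2}$ on $(y_i,\infty)^2$, and then run the dyadic/Cauchy--Schwarz decomposition of \cite[Lemma~3.1]{FoLa05} (as in Lemma~\ref{lemb2}) to produce a \emph{quadratic} lower bound in $\int_{y_I}^1 x^m\varphi$, which closes the estimate. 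So for \textbf{(m4)} you have correctly identified the missing lemma but not supplied it, and for \textbf{(m2)}--\textbf{(m3)} your route is both incomplete and unnecessarily hard; the cancellation above bypasses the scaling question entirely.
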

%%%%%%%%%%%%%%%%

The proof of Proposition~\ref{prop2} is carried out in Section~\ref{sec5} and relies on the choice of suitable test functions in Theorem~\ref{thm1}~\textbf{(s3)}. Comparing \eqref{tt} and Proposition~\ref{prop2} reveals that the properties~\textbf{(m2)} and~\textbf{(m3)} are not optimal. Improving Proposition~\ref{prop2} so that it matches \eqref{tt} in these cases seems to require a finer analysis which we have yet been unable to set up. We however hope to return to this problem in the near future.

%%%%%%%%%%%%%%%%
%%%%%%%%%%%%%%%%
\section{A truncated approximation}\label{sec2}
%%%%%%%%%%%%%%%%
%%%%%%%%%%%%%%%%
%
\newcounter{NumConstB}

Let $\varrho>0$ and assume that $K$, $a$, and $b$ are coagulation and fragmentation coefficients satisfying \eqref{a14} and \eqref{a15}. Also, let $f^{in}$ be an initial condition satisfying
\begin{equation}
f^{in}\in X_0^+ \cap X_{2+\gamma} \;\;\text{  with }\;\; M_1(f^{in}) = \varrho\ . \label{b0}
\end{equation}
We now introduce the approximation to \eqref{a1} we are going to work with in this section. Besides requiring a positive lower bound on the coagulation kernel and the overall fragmentation rate as already mentioned, we also truncate both of them as in \cite{EMRR05}. Specifically, we fix a positive integer $j\ge 2$ and a positive real number $\varepsilon\in (0,1)$ and set
\begin{align}
K_{j,\varepsilon}(x,y) & := 2\varepsilon K_0 + K\left( \min\{x,j\} , \min\{y,j\} \right)\ , \qquad (x,y)\in (0,\infty)^2\ , \label{b1} \\
a_{j,\varepsilon}(x) & := a_0 \left( \min\{x,j\}^\gamma + \varepsilon^2 \right)\ , \qquad x\in (0,\infty)\ . \label{b2}
\end{align}
Since $K_{j,\varepsilon}$ and $a_{j,\varepsilon}$ are bounded, we may proceed as in \cite{BLLxx, EMRR05, Stew89, Walk02} to show, by a Banach fixed point argument in $X_0=L^1(0,\infty)$, that there is a unique non-negative strong solution 
\begin{equation*}
f_{j,\varepsilon}\in C^1([0,\infty);X_0)
\end{equation*}
to the coagulation-fragmentation equation 
\begin{subequations}\label{b70}
\begin{align}
\partial_t f_{j,\varepsilon} & = \mathcal{C}_{j,\varepsilon}f_{j,\varepsilon} + \mathcal{F}_{j,\varepsilon}f_{j,\varepsilon}\ , \qquad (t,x) \in (0,\infty)^2\ , \label{b70a} \\
f_{j,\varepsilon}(0) & = f^{in}\ , \qquad x\in (0,\infty)\ , \label{b70b}
\end{align}
\end{subequations}
where the coagulation and fragmentation operators $\mathcal{C}_{j,\varepsilon}$ and $\mathcal{F}_{j,\varepsilon}$ are given by \eqref{a1b} with $K_{j,\varepsilon}$ instead of $K$ and \eqref{a1c} with $a_{j,\varepsilon}$ instead of $a$, respectively. A first consequence of \eqref{b70a} is that, for $t\ge 0$ and $\vartheta\in L^\infty(0,\infty)$, 
\begin{equation}
\begin{split}
\frac{d}{dt} \int_0^\infty \vartheta(x) f_{j,\varepsilon}(t,x)\ \mathrm{d}x & = \frac{1}{2} \int_0^\infty \int_0^\infty K_{j,\varepsilon}(x,y) \chi_\vartheta(x,y) f_{j,\varepsilon}(t,x) f_{j,\varepsilon}(t,y)\ \mathrm{d}y \mathrm{d}x \\
& \qquad - \int_0^\infty a_{j,\varepsilon}(y) N_\vartheta(y) f_{j,\varepsilon}(t,y)\ \mathrm{d}y \ ,
\end{split} \label{b5}
\end{equation}
where
\begin{subequations}\label{b6}
\begin{align}
\chi_\vartheta(x,y) & := \vartheta(x+y) - \vartheta(x) - \vartheta(y)\ , \qquad (x,y)\in (0,\infty)^2\ , \label{b6a} \\
N_\vartheta(y) & := \vartheta(y) - \int_0^y \vartheta(x) b(x,y)\ \mathrm{d}x\ , \qquad y\in (0,\infty)\ , \label{b6b} 
\end{align}
Owing to \eqref{a15b}, an alternative formula for $N_\vartheta$ reads
\begin{equation}
N_\vartheta(y) = \vartheta(y) - \int_0^1 \vartheta(yz) B(z)\ \mathrm{d}z\ , \qquad y\in (0,\infty)\ . \label{b6c}
\end{equation}
\end{subequations}
For the particular choice $\vartheta(x)=\vartheta_m(x):=x^m$, $x>0$, for some $m\in\mathbb{R}$, we set $\chi_m:=\chi_{\vartheta_m}$ and $N_m:=N_{\vartheta_m}$ for simplicity.

Owing to the boundedness of $K_{j,\varepsilon}$ and $a_{j,\varepsilon}$ and the integrability \eqref{a15c} of $B$ over $(0,1)$, we infer from \eqref{b5} by an approximation argument that $f_{j,\varepsilon}$ is mass-conserving; that is, $f_{j,\varepsilon}\in L^\infty((0,\infty),X_1)$ and
\begin{equation}
M_1(f_{j,\varepsilon}(t)) = \varrho\ , \qquad t\ge 0\ . \label{b4}
\end{equation}
Moreover, a similar approximation argument allows us to show that, if $f^{in}\in X_m$ for some $m>1$, then $f_{j,\varepsilon}\in L^\infty((0,T),X_m)$ for any $T>0$. We shall refine this result in the next section.

\medskip

We now derive several estimates for the family $\{f_{j,\varepsilon}\ :\ j\ge 2\ , \ \varepsilon\in (0,1)\}$, which do not depend on $j\ge 2$. We also pay special attention to the dependence on $\varepsilon\in (0,1)$, if any. Throughout this section, $C$ and $C_i$, $i\ge 1$, denote positive constants which depend only on $K_0$, $\alpha$, $\beta$, $a_0$, $\gamma$, $B$, and $\varrho$. Dependence upon additional parameters will be indicated explicitly. For further use, we set
\begin{subequations}\label{b95}
\begin{equation}
\mathfrak{b}_m := \int_0^1 z^m B(z)\ \mathrm{d}z \;\text{ for }\; m>m_\star \;\;\text{ and }\;\; \mathcal{B}_p^p := \int_0^1 B(z)^{p}\ \mathrm{d}z \;\text{ for}\; p\in [1,p_0]\ , \label{b95a}
\end{equation}	
which are finite by \eqref{a15c} and \eqref{a17}, and satisfy 
\begin{equation}
\mathfrak{b}_m < 1 \iff m>1 \label{b95b}
\end{equation}
\end{subequations}
due to \eqref{a15c}. Also, Young's inequality and \eqref{a14} entail that
\begin{equation}
K(x,y) \le K_0 \left( x^\lambda+y^\lambda \right)\ , \qquad (x,y)\in (0,\infty)^2\ . \label{b950}
\end{equation}

%%%%%%%%%%%%%%%%
%%%%%%%%%%%%%%%%
\subsection{Moment Estimates}\label{sec2.1}
%%%%%%%%%%%%%%%%
%%%%%%%%%%%%%%%%

For $m\in\mathbb{R}$ we set 
\begin{equation}
\mathcal{M}_{m,j,\varepsilon} := \sup_{t\ge 0} \left\{ M_m(f_{j,\varepsilon}(t)) \right\} \in [0,\infty] \label{spirou}
\end{equation}
and begin with the behaviour of $f_{j,\varepsilon}$ for large sizes.

%%%%%%%%%%%%%%%%
\begin{lemma}\label{lemb1}
Let $m\ge 2$ and assume that $f^{in}\in X_m$. There is a positive constant $\mu_m \ge \Gamma(m+1) \varrho^m$ depending only on $K_0$, $\alpha$, $\beta$, $a_0$, $\gamma$, $B$, $\varrho$, and $m$ such that
\begin{equation*}
\mathcal{M}_{m,j,\varepsilon} \le \max\{ M_m(f^{in}) , \mu_m \}\ .
\end{equation*}
\end{lemma}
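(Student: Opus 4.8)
The plan is to derive a differential inequality for the moment $M_m(f_{j,\varepsilon}(t))$ of the form $\frac{d}{dt} M_m(f_{j,\varepsilon}(t)) \le A - \delta M_m(f_{j,\varepsilon}(t))$ (or at least $\frac{d}{dt} M_m \le A$ whenever $M_m$ is large), from which the claimed uniform-in-time, uniform-in-$j,\varepsilon$ bound follows by a standard comparison/ODE argument. The starting point is the identity \eqref{b5} with $\vartheta=\vartheta_m$, $m\ge 2$, which gives
\begin{equation*}
\frac{d}{dt} M_m(f_{j,\varepsilon}(t)) = \frac12 \int_0^\infty\!\!\int_0^\infty K_{j,\varepsilon}(x,y)\,\chi_m(x,y)\, f_{j,\varepsilon}(t,x) f_{j,\varepsilon}(t,y)\,\mathrm{d}y\mathrm{d}x - \int_0^\infty a_{j,\varepsilon}(y)\, N_m(y)\, f_{j,\varepsilon}(t,y)\,\mathrm{d}y\ .
\end{equation*}

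First I would analyze the two kernels separately. For the fragmentation term, formula \eqref{b6c} gives $N_m(y) = (1-\mathfrak{b}_m) y^m$, and since $m\ge 2>1$ we have $1-\mathfrak{b}_m>0$ by \eqref{b95b}; moreover $a_{j,\varepsilon}(y)\ge a_0 \min\{y,j\}^\gamma$, so the fragmentation contribution is bounded above by $-a_0(1-\mathfrak{b}_m)\int_0^\infty \min\{y,j\}^\gamma y^m f_{j,\varepsilon}\,\mathrm{d}y$, a genuinely dissipative term (negative). For the coagulation term, the elementary convexity inequality $\chi_m(x,y)=(x+y)^m-x^m-y^m \le C_m (x^{m-1}y + x y^{m-1})$ for $m\ge 2$, combined with $K_{j,\varepsilon}(x,y)\le 2\varepsilon K_0 + K_0(x^\lambda+y^\lambda)\le C(1+x^\lambda+y^\lambda)$ (using \eqref{b950} and $\varepsilon<1$), and symmetrization in $x,y$, lets me bound the coagulation contribution by a sum of terms of the shape $C M_{m-1+\lambda}(f_{j,\varepsilon})M_1(f_{j,\varepsilon})$, $C M_{m-1}(f_{j,\varepsilon})M_{1+\lambda}(f_{j,\varepsilon})$, and lower-order analogues, all with $M_1(f_{j,\varepsilon})=\varrho$ fixed by \eqref{b4}. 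Since $\lambda<1$, each such term involves a moment of order at most $m-1+\lambda < m$.

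The core of the argument is then an interpolation: writing $m-1+\lambda$ as a convex combination of $1$ and $m$ and using $M_1(f_{j,\varepsilon})=\varrho$, one gets $M_{m-1+\lambda}(f_{j,\varepsilon}) \le C M_m(f_{j,\varepsilon})^{\theta}$ with $\theta=(m-2+\lambda)/(m-1)<1$, and similarly for the other intermediate moments appearing. Hence the coagulation term is bounded by $C(1+M_m(f_{j,\varepsilon})^\theta)$ with $\theta<1$. Against this one plays the dissipative fragmentation term: bounding $\int \min\{y,j\}^\gamma y^m f_{j,\varepsilon}\,\mathrm{d}y$ from below in terms of $M_m(f_{j,\varepsilon})$ requires splitting $\{y\le 1\}$ (where $\min\{y,j\}^\gamma \ge$ nothing useful, but $y^m f_{j,\varepsilon}$ there is controlled by $M_0$ or even $M_1$ times a constant, since $y\le 1$ forces $y^m\le y$) from $\{y\ge 1\}$ (where $\min\{y,j\}^\gamma\ge 1$ as soon as $j\ge 1$, so the integrand dominates $y^m f_{j,\varepsilon}$); this yields $\int \min\{y,j\}^\gamma y^m f_{j,\varepsilon}\,\mathrm{d}y \ge M_m(f_{j,\varepsilon}) - C$. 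Combining, $\frac{d}{dt} M_m(f_{j,\varepsilon}) \le C(1+M_m^\theta) - a_0(1-\mathfrak{b}_m)(M_m - C) \le C' - \delta M_m$ for a constant $\delta>0$ (absorbing the sublinear $M_m^\theta$ into the linear term via Young), which gives $\mathcal{M}_{m,j,\varepsilon}\le \max\{M_m(f^{in}), C'/\delta\}$ uniformly in $j,\varepsilon$. The lower bound $\mu_m \ge \Gamma(m+1)\varrho^m$ on the constant is then forced: by Jensen (or Hölder) applied to the probability measure $x f_{j,\varepsilon}(t,x)\mathrm{d}x/\varrho$, one has $M_m(f_{j,\varepsilon})\ge \varrho^{1-m}\big(\int x\cdot x f_{j,\varepsilon}/\varrho\big)^{?}$ — more precisely the bound $\mu_m\ge \Gamma(m+1)\varrho^m$ presumably records that the equilibrium-type moments cannot be smaller than those of the exponential profile $\varrho^2 e^{-\varrho x}$, so I would simply enlarge the constant obtained above to satisfy this (it is a lower bound on an upper bound, hence harmless).

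The main obstacle I anticipate is the justification that these formal moment computations are legitimate, i.e. that $M_m(f_{j,\varepsilon}(t))$ is finite and differentiable with the stated identity. This is exactly what the truncation at level $j$ is for: since $K_{j,\varepsilon}$ and $a_{j,\varepsilon}$ are bounded, one first shows (as flagged in the paragraph after \eqref{b4}) that $f^{in}\in X_m$ propagates to $f_{j,\varepsilon}\in L^\infty((0,T),X_m)$ for each finite $T$, then uses a cutoff $\vartheta_m(x)\mathbf{1}_{x\le R}$, passes to the limit $R\to\infty$ with the monotone/dominated convergence theorems justified by the $X_m$ bound, and only afterwards exploits that the resulting estimate is independent of $j$ and $\varepsilon$. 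Everything else — the convexity inequality for $\chi_m$, the interpolation, the Gronwall step — is routine.
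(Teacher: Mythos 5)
Your plan matches the paper's proof essentially line for line: the same convexity bound $\chi_m(x,y)\le c_m(xy^{m-1}+x^{m-1}y)$, the same lower bound on the fragmentation integral by splitting at $y=1$ to get $M_m-\varrho$, the same Hölder interpolation of the intermediate moments between $M_1=\varrho$ and $M_m$ followed by Young to produce a small multiple of $M_m$ plus a constant, and the same Gronwall closure; your remarks on why $\mu_m\ge\Gamma(m+1)\varrho^m$ can be imposed for free (it is the $m$-th moment of $\varrho^{-1}e^{-x/\varrho}$, needed later so that this exponential lies in $\mathcal{Z}_\varepsilon$) and on justifying the moment identity via truncation are both accurate. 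No gaps.
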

%%%%%%%%%%%%%%%%

\begin{proof}
We first recall that there is $c_m>0$ depending only on $m$ such that
\begin{equation}
\chi_m(x,y) \le c_m \left( x y^{m-1} + x^{m-1} y \right)\ , \qquad (x,y)\in (0,\infty)^2\ , \label{spip}
\end{equation}
see \cite[Lemma~2.3~(ii)]{Carr92} for instance. Let $t>0$. We infer from \eqref{b5} with $\vartheta=\vartheta_m$, \eqref{b95}, \eqref{spip}, and the symmetry of $K$ that
\begin{align*}
\frac{d}{dt} M_m(f_{j,\varepsilon}(t)) & \le \frac{c_m}{2} \int_0^\infty \int_0^\infty K_{j,\varepsilon}(x,y) \left( x^{m-1} y + x y^{m-1} \right) f_{j,\varepsilon}(t,x) f_{j,\varepsilon}(t,y)\ \mathrm{d}y\mathrm{d}x \\
& \qquad - (1 - \mathfrak{b}_m) \int_0^\infty x^m a_{j,\varepsilon}(x) f_{j,\varepsilon}(t,x)\ \mathrm{d}x \\
& \le c_m \int_0^\infty \int_0^\infty x y^{m-1} K_{j,\varepsilon}(x,y) f_{j,\varepsilon}(t,x) f_{j,\varepsilon}(t,y)\ \mathrm{d}y\mathrm{d}x \\ 
& \qquad - a_0 (1 - \mathfrak{b}_m) \int_0^\infty x^m \min\{x,j\}^\gamma f_{j,\varepsilon}(t,x)\ \mathrm{d}x \ .
\end{align*}
On the one hand, by \eqref{b4},
\begin{align*}
\int_0^\infty x^m \min\{x,j\}^\gamma f_{j,\varepsilon}(t,x)\ \mathrm{d}x & \ge \int_1^\infty x^m \min\{x,j\}^\gamma f_{j,\varepsilon}(t,x)\ \mathrm{d}x \\
& \ge \int_1^\infty x^m f_{j,\varepsilon}(t,x)\ \mathrm{d}x \\
& = M_m(f_{j,\varepsilon}(t)) - \int_0^1 x^m f_{j,\varepsilon}(t,x)\ \mathrm{d}x \\
& \ge M_m(f_{j,\varepsilon}(t)) - \varrho\ .
\end{align*}
On the other hand, it follows from \eqref{b4} and H\"older's and Young's inequalities that
\begin{align*}
\int_0^\infty \int_0^\infty x y^{m-1} f_{j,\varepsilon}(t,x) f_{j,\varepsilon}(t,y)\ \mathrm{d}y\mathrm{d}x & \le \varrho M_{m-1}(f_{j,\varepsilon}(t)) \\& \le \varrho M_{m}(f_{j,\varepsilon}(t))^{(m-2)/(m-1)} M_{1}(f_{j,\varepsilon}(t))^{1/(m-1)} \\
& \le \frac{a_0(1-\mathfrak{b}_m)}{8 c_m K_0} M_{m}(f_{j,\varepsilon}(t)) + C(m)\ .
\end{align*}
Similarly, 
\begin{align*}
& \int_0^\infty \int_0^\infty x y^{m-1} \min\{y,j\}^\lambda f_{j,\varepsilon}(t,x) f_{j,\varepsilon}(t,y)\ \mathrm{d}y\mathrm{d}x \\
& \qquad \le \varrho M_{m+\lambda-1}(f_{j,\varepsilon}(t)) \\
& \qquad \le \varrho M_{m}(f_{j,\varepsilon}(t))^{(m+\lambda-2)/(m-1)} M_{1}(f_{j,\varepsilon}(t))^{(1-\lambda)/(m-1)} \\
& \qquad \le \frac{a_0(1-\mathfrak{b}_m)}{4 c_m K_0} M_{m}(f_{j,\varepsilon}(t)) + C(m)\ ,
\end{align*}
and 
\begin{align*}
&\int_0^\infty \int_0^\infty x y^{m-1} \min\{x,j\}^\lambda f_{j,\varepsilon}(t,x) f_{j,\varepsilon}(t,y)\ \mathrm{d}y\mathrm{d}x \\
& \qquad \le M_{1+\lambda}(f_{j,\varepsilon}(t)) M_{m-1}(f_{j,\varepsilon}(t)) \\
& \qquad \le \varrho M_{m}(f_{j,\varepsilon}(t))^{(m+\lambda-2)/(m-1)} M_{1}(f_{j,\varepsilon}(t))^{(m-\lambda)/(m-1)} \\
& \qquad \le \frac{a_0(1-\mathfrak{b}_m)}{4 c_m K_0} M_{m}(f_{j,\varepsilon}(t)) + C(m)\ .
\end{align*}
Collecting the previous inequalities and using \eqref{b950}, we obtain
\begin{align*}
\frac{d}{dt} M_m(f_{j,\varepsilon}(t)) & \le \frac{2+\varepsilon}{4} a_0 (1-\mathfrak{b}_m) M_m(f_{j,\varepsilon}(t)) +C(m) \\
& \qquad - a_0 (1-\mathfrak{b}_m) \left( M_m(f_{j,\varepsilon}(t)) - \varrho \right) \\
& \le - \frac{a_0}{4} (1-\mathfrak{b}_m) M_m(f_{j,\varepsilon}(t)) + C(m)\ .
\end{align*}
Integrating the previous differential inequality gives
\begin{equation*}
M_m(f_{j,\varepsilon}(t)) \le e^{-a_0(1-\mathfrak{b}_m)t /4} M_m(f^{in}) + \frac{4 C(m)}{a_0(1-\mathfrak{b}_m)} \left( 1 - e^{-a_0(1-\mathfrak{b}_m)t /4} \right)
\end{equation*}
for $t\ge 0$. Therefore,
\begin{equation*}
M_m(f_{j,\varepsilon}(t)) \le \max\left\{ M_m(f^{in}) , \frac{4 C(m)}{a_0(1-\mathfrak{b}_m)} \right\}\ , \qquad t\ge 0\ ,
\end{equation*}
from which Lemma~\ref{lemb1} follows. 
\end{proof}	

From now on, we fix a positive real number 
\begin{subequations}\label{b98}
\begin{equation}
\sigma>\max\left\{ 1 , \varrho , \mu_2 , \mu_{2+\gamma} \right\} \label{b98a}
\end{equation} 
such that
\begin{equation}
\max\left\{ M_2(f^{in}) , M_{2+\gamma}(f^{in}) \right\} \le \sigma\ . \label{b98b}
\end{equation}
\end{subequations}
A first consequence of \eqref{b4}, \eqref{b98}, Lemma~\ref{lemb1}, and H\"older's inequality is that
\begin{equation}
\mathcal{M}_{1+\gamma,j,\varepsilon} \le \sigma \;\text{ and }\; \mathcal{M}_{2,j,\varepsilon} \le \sigma\ . \label{b97}
\end{equation}

Next, owing to \eqref{b4}, \eqref{b98}, and \eqref{b97}, another application of H\"older's inequality provides a similar bound for moments of order $m\in (1,2)$, which we report now.

%%%%%%%%%%%%%%%%
\begin{corollary}\label{corb1b}
For $m\in (1,2)$,
\begin{equation*}
\mathcal{M}_{m,j,\varepsilon} \le \sigma\ .
\end{equation*}
\end{corollary}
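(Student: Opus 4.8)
The plan is to interpolate the moment of order $m\in(1,2)$ between the moments of order $1$ and $2$, both of which are already controlled uniformly in $j$, $\varepsilon$, and $t$. Fix $m\in(1,2)$ and write $m=\theta\cdot 1+(1-\theta)\cdot 2$ with $\theta:=2-m\in(0,1)$. For $t\ge 0$, since $x^m=x^{\theta}\,x^{2(1-\theta)}$, H\"older's inequality with exponents $1/\theta$ and $1/(1-\theta)$ applied to the measure $f_{j,\varepsilon}(t,x)\,\mathrm{d}x$ yields
\begin{equation*}
M_m(f_{j,\varepsilon}(t)) = \int_0^\infty x^{\theta}\,x^{2(1-\theta)}\, f_{j,\varepsilon}(t,x)\ \mathrm{d}x \le M_1(f_{j,\varepsilon}(t))^{\theta}\, M_2(f_{j,\varepsilon}(t))^{1-\theta}\ .
\end{equation*}

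\textbf{Conclusion.} By the mass conservation property \eqref{b4} one has $M_1(f_{j,\varepsilon}(t))=\varrho$, while \eqref{b97} gives $M_2(f_{j,\varepsilon}(t))\le\mathcal{M}_{2,j,\varepsilon}\le\sigma$. Hence
\begin{equation*}
M_m(f_{j,\varepsilon}(t)) \le \varrho^{\theta}\,\sigma^{1-\theta}\ , \qquad t\ge 0\ .
\end{equation*}
Since $\varrho<\sigma$ by \eqref{b98a} and $\sigma>1$, the right-hand side is bounded above by $\sigma^{\theta}\,\sigma^{1-\theta}=\sigma$. Taking the supremum over $t\ge 0$ in the left-hand side gives $\mathcal{M}_{m,j,\varepsilon}\le\sigma$, which is the claim. (Alternatively, interpolating between $M_1$ and $M_{1+\gamma}$ using \eqref{b97} works identically when $\gamma\ge 1$, but the interpolation between $M_1$ and $M_2$ covers the whole range $m\in(1,2)$ at once.)

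\textbf{Main difficulty.} There is essentially no obstacle here: the only points to keep track of are that the constant $\sigma$ was chosen in \eqref{b98} precisely so that it simultaneously dominates $\varrho$, $1$, and the relevant higher moment bounds, which makes the interpolated bound collapse back to $\sigma$ without introducing any $m$-dependent constant.
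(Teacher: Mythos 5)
Your argument is correct and is precisely the one the paper intends: interpolate $M_m$ between $M_1=\varrho$ (from \eqref{b4}) and $M_2\le\sigma$ (from \eqref{b97}) via H\"older's inequality, then use $\varrho<\sigma$ from \eqref{b98a} to absorb the bound into $\sigma$. No further comment is needed.
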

%%%%%%%%%%%%%%%%

We next turn to the behaviour for small sizes and, to this end, derive estimates for moments of order smaller than one.

%%%%%%%%%%%%%%%%
\begin{lemma}\label{lemb2}
Let $m\in (\lambda,1)$. There is $\mu_m\ge \Gamma(m+1) \varrho^m$ depending only $K_0$, $\alpha$, $\beta$, $a_0$, $\gamma$, $B$, $\varrho$, and $m$ such that
\begin{equation*}
\mathcal{M}_{m,j,\varepsilon} \le \max\{ M_m(f^{in}) , \mu_m + \sigma \}\ .
\end{equation*}
\end{lemma}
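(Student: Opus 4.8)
The plan is to mimic the proof of Lemma~\ref{lemb1}, but now exploiting the crucial sign information encoded in \eqref{b95b}: since $m\in(\lambda,1)\subset(-\infty,1)$, we have $\mathfrak{b}_m>1$, so the fragmentation contribution $-(1-\mathfrak{b}_m)\int_0^\infty x^m a_{j,\varepsilon}(x) f_{j,\varepsilon}(t,x)\,\mathrm{d}x$ is actually \emph{nonnegative}; it can be discarded or, better, estimated from above by a multiple of lower-order moments that are already under control. The real source of decay for small sizes must therefore come from the coagulation term. First I would apply \eqref{b5} with $\vartheta=\vartheta_m$, $m\in(\lambda,1)$, and recall the superadditivity-type bound for $\chi_m$: for $m\in(0,1)$ one has $\chi_m(x,y)=(x+y)^m-x^m-y^m\le 0$, while more precisely $\chi_m(x,y)\le -c_m'\min\{x,y\}^{m-1}\max\{x,y\}$ or, in a cruder and more convenient form, $\chi_m(x,y)\le -\delta_m\,x^m$ whenever $y\le x$ for a suitable $\delta_m\in(0,1)$ (using homogeneity: $\chi_m(x,y)=x^m[(1+y/x)^m-1-(y/x)^m]$ and the bracket is bounded above by a negative constant when $y/x\le 1$). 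This yields a genuinely dissipative term $-c\int\int_{y\le x} K_{j,\varepsilon}(x,y) x^m f f\,\mathrm{d}y\mathrm{d}x$.

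Next I would bound $K_{j,\varepsilon}$ from below. Here the positive lower bound $2\varepsilon K_0$ would a priori let one write $K_{j,\varepsilon}\ge 2\varepsilon K_0$, but that would introduce an $\varepsilon$-dependence the statement does not allow; instead one should use $K_{j,\varepsilon}(x,y)\ge K(\min\{x,j\},\min\{y,j\})\ge K_0\,\min\{x,j\}^\alpha\min\{y,j\}^\beta\ge K_0 x^\alpha y^\beta$ for $x,y\le j$ (and handle $x$ or $y\ge j$ separately, where large moments already bound things). After localizing to, say, $x\in(0,1)$ and $y\in(0,x)$, the dissipative term controls $\int_0^1 x^{m+\alpha}\big(\int_0^x y^\beta f_{j,\varepsilon}(t,y)\,\mathrm{d}y\big) f_{j,\varepsilon}(t,x)\,\mathrm{d}x$ from below. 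The standard trick (as in moment estimates for coagulation with small sizes, e.g. in \cite{EMRR05}) is to split the mass: for $x$ small, $\int_0^x y^\beta f\,\mathrm{d}y = M_\beta(f) - \int_x^\infty y^\beta f\,\mathrm{d}y \ge M_\beta(f) - x^{\beta-1}\varrho$ (when $\beta\le 1$), which is bounded below by a positive constant once $x$ is small enough, \emph{provided} $M_\beta(f_{j,\varepsilon}(t))$ is bounded below away from zero. A lower bound on $M_\beta$ is available from conservation of mass: since $M_1(f_{j,\varepsilon}(t))=\varrho$ and, by Corollary~\ref{corb1b} / \eqref{b97}, $M_2(f_{j,\varepsilon}(t))\le\sigma$, an interpolation $\varrho=M_1\le M_\beta^{\theta}M_2^{1-\theta}$ with $\theta=1/(2-\beta)$ forces $M_\beta(f_{j,\varepsilon}(t))\ge (\varrho/\sigma^{1-\theta})^{1/\theta}=:2\eta>0$, uniformly in $j,\varepsilon,t$. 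This is the key quantitative input and I expect it to be the linchpin of the whole argument.

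Putting the pieces together, I would arrive at a differential inequality of the form
\begin{equation*}
\frac{d}{dt} M_m(f_{j,\varepsilon}(t)) \le -c_1 \int_0^{x_0} x^{m+\alpha} f_{j,\varepsilon}(t,x)\,\mathrm{d}x + c_2\ ,
\end{equation*}
where $x_0\in(0,1)$ and $c_1,c_2>0$ depend only on the admissible parameters, the term $c_2$ absorbing all contributions from $x\ge x_0$ (there $x^m\le x_0^{m-\alpha}x^\alpha\le\ldots$ is controlled by $M_\lambda$, $M_1$, and $M_{1+\gamma}$, all bounded by $\sigma$ via \eqref{b97}, Corollary~\ref{corb1b}, and \eqref{b95b}) and the (nonnegative) fragmentation term handled similarly, using $\mathfrak{b}_m>1$ together with $a_{j,\varepsilon}(x)\le a_0(x^\gamma+1)$ and the finiteness of $M_{1+\gamma}$. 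Since on $(0,x_0)$ we have $x^{m+\alpha}\ge x_0^{\alpha}x^{m}$... no — rather $x^{m+\alpha}\le x_0^\alpha x^m$, which goes the wrong way; instead I would simply write $\int_0^{x_0} x^m f\,\mathrm{d}x = M_m - \int_{x_0}^\infty x^m f\,\mathrm{d}x$ and bound the tail by $x_0^{m-1}\varrho$, then note $x^{m+\alpha}\ge x^{m+1}x_0^{\alpha-1}$ is useless, so the cleanest route is to trade: $\int_0^{x_0} x^{m+\alpha} f\,\mathrm{d}x \ge x_0^{\alpha-m'}\int_0^{x_0} x^{m'} f\,\mathrm{d}x$ fails too, so one should instead close the inequality \emph{directly in the moment of order $m+\alpha$}: i.e. choose to estimate $\frac{d}{dt}M_m$ in terms of $-M_{m+\alpha,\mathrm{loc}}$ and then absorb by noting $M_{m+\alpha}(f_{j,\varepsilon}(t))\ge x_0^\alpha M_m(f_{j,\varepsilon}(t)) - C$ restricted suitably — the upshot being a bound $\frac{d}{dt}M_m\le -c_3 M_m + c_4$, whence by Gronwall $M_m(f_{j,\varepsilon}(t))\le\max\{M_m(f^{in}),c_4/c_3\}$, and one sets $\mu_m:=c_4/c_3$, inflating it if necessary so that $\mu_m\ge\Gamma(m+1)\varrho^m$ and absorbing the $+\sigma$ from the upper-moment bounds used along the way. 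The main obstacle is exactly this algebra of getting a \emph{closed} dissipative differential inequality for $M_m$ despite the coagulation gain living at order $m+\alpha>m$ rather than at order $m$; the resolution is the uniform lower bound on $M_\beta$ via mass conservation plus the boundedness of $M_2$, which converts the $x^\alpha y^\beta$ weight into an effective linear-in-$M_m$ dissipation on small sizes.
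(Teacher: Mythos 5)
There is a genuine gap, and you are candid enough to half-admit it yourself ("goes the wrong way", "fails too"). Two concrete problems.

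First, the "cruder and more convenient form" $\chi_m(x,y)\le -\delta_m x^m$ for $y\le x$ is false. Writing $\chi_m(x,y)=x^m\,g(y/x)$ with $g(t)=(1+t)^m-1-t^m$, one has $g(0)=0$ and $g(t)\to 0$ as $t\to 0^+$, so no uniform negative lower bound $g(t)\le -\delta_m$ holds on $(0,1]$. The correct bound (and the one the paper uses, via convexity of $s\mapsto s^{m-1}$) carries an extra small factor: $x^m+y^m-(x+y)^m\ge 2(1-m)\,xy\,(x+y)^{m-2}$, which does vanish as $y\to 0$. Because of this extra factor, the dissipation from coagulation genuinely lives at an order strictly above $m$ near the origin, and there is no way to trade it pointwise into a term proportional to $x^m$.

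Second, and more fundamentally, your plan is to close a \emph{linear} dissipative inequality $\frac{d}{dt}M_m\le -c_3 M_m + c_4$ by combining the $x^\alpha y^\beta$ weight with a lower bound on $M_\beta$. This cannot be done in the form you describe, as your own internal monologue shows: all the exponent swaps go the wrong way. The paper's resolution is different and is the missing idea: it is \emph{not} a lower bound on $M_\beta$, and the resulting inequality is \emph{quadratic}, not linear, in $M_m$. After lower-bounding $K_{j,\varepsilon}(x,y)\ge 2K_0(xy)^{\lambda/2}$ on $(0,1)^2$ and using the convexity bound above, one is left with
\begin{equation*}
P_{j,\varepsilon}(t)\ge 2(1-m)\int_0^1\!\!\int_0^1 (x+y)^{m-2}(xy)^{(2+\lambda)/2}f_{j,\varepsilon}(t,x)f_{j,\varepsilon}(t,y)\,\mathrm{d}y\,\mathrm{d}x\,.
\end{equation*}
The key step, borrowed from \cite[Lemma~3.1]{FoLa05}, is a dyadic decomposition of $(0,1)$ along $x_i=i^{-2/(m-\lambda)}$ combined with the Cauchy--Schwarz inequality, showing that this double integral dominates $c(m)\bigl(\int_0^1 x^m f_{j,\varepsilon}(t,x)\,\mathrm{d}x\bigr)^2$. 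Mass conservation then enters only to control the tail, $\int_1^\infty x^m f_{j,\varepsilon}\le\varrho$, so $\int_0^1 x^m f_{j,\varepsilon}\ge M_m-\varrho$ and the coagulation term contributes $-C_3 M_m^2 + C_2$. Since the fragmentation term contributes at most a constant plus a term linear in $M_m$ (using $\mathfrak{b}_m>1$, $M_{\gamma+1}\le\sigma$, and Young's inequality on $\varepsilon^2 M_m$), the quadratic dissipation dominates and Gronwall closes. Your interpolation bound $M_\beta\ge (\varrho/\sigma^{(1-\beta)/(2-\beta)})^{2-\beta}$ is a true statement but does not appear in, nor substitute for, the $L^2$-type dyadic argument that actually converts the coagulation dissipation into a useful power of $M_m$.
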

%%%%%%%%%%%%%%%%

\begin{proof}
Let $m\in (\lambda,1)$ and $t>0$. We first argue as in \cite[Lemma~3.1]{FoLa05} to estimate the contribution of the coagulation term to the time evolution of $M_m(f_{j,\varepsilon})$, see also \cite[Lemma~8.2.12]{BLLxx}. More precisely, since $j\ge 2$, $\chi_m\le 0$, and $K_{j,\varepsilon}(x,y) \ge 2K_0 (xy)^{\lambda/2}$ for $(x,y)\in (0,1)^2$, we obtain
\begin{align}
& - \frac{1}{2K_0} \int_0^\infty \int_0^\infty K_{j,\varepsilon}(x,y) \chi_m(x,y) f_{j,\varepsilon}(t,x) f_{j,\varepsilon}(t,y)\ \mathrm{d}y\mathrm{d}x \nonumber \\
& \qquad \ge \frac{1}{2K_0} \int_0^1 \int_0^1 \left[ x^m + y^m - (x+y)^m \right] K_{j,\varepsilon}(x,y) f_{j,\varepsilon}(t,x) f_{j,\varepsilon}(t,y)\ \mathrm{d}y\mathrm{d}x \nonumber \\
& \qquad \ge P_{j,\varepsilon}(t) := \int_0^1 \int_0^1 \left[ x^m + y^m - (x+y)^m \right] (xy)^{\lambda/2} f_{j,\varepsilon}(t,x) f_{j,\varepsilon}(t,y)\ \mathrm{d}y\mathrm{d}x \ . \label{b9}
\end{align}
Since $m<1$, it follows from the convexity of $x\mapsto x^{m-1}$ that, for $(x,y)\in (0,\infty)^2$, 
\begin{align*}
x^m + y^m - (x+y)^m & = x \left[ x^{m-1} - (x+y)^{m-1} \right] + y \left[ y^{m-1} - (x+y)^{m-1} \right] \\ 
& \ge 2 (1-m) x y (x+y)^{m-2}\ .
\end{align*}
Therefore, 
\begin{equation*}
P_{j,\varepsilon}(t) \ge 2(1-m) \int_0^1 \int_0^1 (x+y)^{m-2} (xy)^{(2+\lambda)/2} f_{j,\varepsilon}(t,x) f_{j,\varepsilon}(t,y)\ \mathrm{d}y\mathrm{d}x \ .
\end{equation*}
Introducing
$$
x_i := i^{-2/(m-\lambda)} \;\text{ and }\; P_{j,\varepsilon}(t,i) := \int_{x_{i+1}}^{x_i} x^{(2+\lambda)/2} f_{j,\varepsilon}(t,x)\ \mathrm{d}x\ , \qquad i\ge 1\ ,
$$
we further obtain
\begin{align}
P_{j,\varepsilon}(t) & \ge 2(1-m) \sum_{i=1}^\infty \int_{x_{i+1}}^{x_i} \int_{x_{i+1}}^{x_i} (x+y)^{m-2} (xy)^{(2+\lambda)/2} f_{j,\varepsilon}(t,x) f_{j,\varepsilon}(t,y)\ \mathrm{d}y\mathrm{d}x \nonumber \\
& \ge 2^{m-1} (1-m) \sum_{i=1}^\infty x_i^{m-2} P_{j,\varepsilon}(t,i)^2\ . \label{b10}
\end{align}
It next follows from the Cauchy-Schwarz inequality that
\begin{align}
\int_0^1 x^m f_{j,\varepsilon}(t,x)\ \mathrm{d}x & = \sum_{i=1}^\infty \int_{x_{i+1}}^{x_i} x^m f_{j,\varepsilon}(t,x)\ \mathrm{d}x \le \sum_{i=1}^\infty x_{i+1}^{(2m-2-\lambda)/2} P_{j,\varepsilon}(t,i) \nonumber \\
& \le \left( \sum_{i=1}^\infty x_{i+1}^{2m-2-\lambda} x_i^{2-m} \right)^{1/2} \left( \sum_{i=1}^\infty x_i^{m-2} P_{j,\varepsilon}(t,i)^2 \right)^{1/2}\ . \label{b11}
\end{align}
Since 
\begin{equation*}
x_{i+1}^{2m-2-\lambda} x_i^{2-m} \le (2i)^{2(2+\lambda-2m)/(m-\lambda)} i^{-2(2-m)/(m-\lambda)} = 4^{(2+\lambda-2m)/(m-\lambda)} i^{-2}\ ,
\end{equation*}
the series in the right-hand side of \eqref{b11} converges and we deduce from \eqref{b10} and \eqref{b11} that
\refstepcounter{NumConstB}\label{cstB1}
\begin{equation}
P_{j,\varepsilon}(t) \ge C_{\ref{cstB1}}(m) \left( \int_0^1 x^m f_{j,\varepsilon}(t,x)\ \mathrm{d}x \right)^2\ . \label{b12}
\end{equation}
Furthermore, as
\begin{align*}
M_m(f_{j,\varepsilon}(t)) & = \int_0^1 x^m f_{j,\varepsilon}(t,x)\ \mathrm{d}x + \int_1^\infty x^m f_{j,\varepsilon}(t,x)\ \mathrm{d}x \\
& \le \int_0^1 x^m f_{j,\varepsilon}(t,x)\ \mathrm{d}x + \int_1^\infty x f_{j,\varepsilon}(t,x)\ \mathrm{d}x \\
& \le \int_0^1 x^m f_{j,\varepsilon}(t,x)\ \mathrm{d}x + \varrho
\end{align*}
by \eqref{b4}, we infer from Young's inequality that
\begin{equation}
\left( \int_0^1 x^m f_{j,\varepsilon}(t,x)\ \mathrm{d}x \right)^2 \ge \frac{M_m(f_{j,\varepsilon}(t))^2}{2} - \varrho^2\ . \label{b13}
\end{equation}
Combining \eqref{b9}, \eqref{b12}, and \eqref{b13} provides the existence of two positive constants \refstepcounter{NumConstB}\label{cstB2} $C_{\ref{cstB2}}(m)$ and \refstepcounter{NumConstB}\label{cstB3}
$C_{\ref{cstB3}}(m)$ such that
\begin{equation}
\frac{1}{2} \int_0^\infty \int_0^\infty K_{j,\varepsilon}(x,y) \chi_m(x,y) f_{j,\varepsilon}(t,x) f_{j,\varepsilon}(t,y)\ \mathrm{d}y\mathrm{d}x \le C_{\ref{cstB2}}(m) - C_{\ref{cstB3}}(m) M_m(f_{j,\varepsilon}(t))^2\ . \label{b13b}
\end{equation}
Consequently, recalling that $\mathfrak{b}_m>1$ by \eqref{b95} as $m<1$, it follows from \eqref{b5} with $\vartheta=\vartheta_m$, \eqref{b97}, \eqref{b13b}, and Young's inequality that \refstepcounter{NumConstB}\label{cstB4}
\begin{align*}
\frac{d}{dt} M_m(f_{j,\varepsilon}(t)) & = \frac{1}{2} \int_0^\infty \int_0^\infty K_{j,\varepsilon}(x,y)\chi_m(x,y) f_{j,\varepsilon}(t,x) f_{j,\varepsilon}(t,y)\ \mathrm{d}y\mathrm{d}x \\
& \qquad + a_0 (\mathfrak{b}_m-1) \int_0^\infty x^m \left( \varepsilon^2 + \min\{x,j\}^\gamma \right) f_{j,\varepsilon}(t,x)\ \mathrm{d}x \\
& \le C_{\ref{cstB2}}(m) - C_{\ref{cstB3}}(m) M_m(f_{j,\varepsilon}(t))^2 + a_0 \mathfrak{b}_m M_{m+\gamma}(f_{j,\varepsilon}(t)) \\
& \qquad + a_0 \mathfrak{b}_m \varepsilon^2 M_m(f_{j,\varepsilon}(t)) \\
& \le C_{\ref{cstB2}}(m) - C_{\ref{cstB3}}(m) M_m(f_{j,\varepsilon}(t))^2 + \frac{a_0 \mathfrak{b}_m \gamma}{\gamma+1-m} M_{\gamma+1}(f_{j,\varepsilon}(t)) \\
& \qquad + a_0 \mathfrak{b}_m \left[ \frac{1-m}{\gamma+1-m} + 1 \right] M_m(f_{j,\varepsilon}(t)) \\
& \le C_{\ref{cstB4}}(m) \left[ 1 + M_m(f_{j,\varepsilon}(t)) + \sigma \right] - C_{\ref{cstB3}}(m) M_m(f_{j,\varepsilon}(t))^2\ .
\end{align*}
As
\begin{equation*}
M_m(f_{j,\varepsilon}(t)) \le \frac{C_{\ref{cstB3}}(m)}{2C_{\ref{cstB4}}(m)} M_m(f_{j,\varepsilon}(t))^2 + \frac{C_{\ref{cstB4}}(m)}{2C_{\ref{cstB3}}(m)}\ ,
\end{equation*}
we finally obtain
\begin{equation*}
\frac{d}{dt} M_m(f_{j,\varepsilon}(t)) \le -C_{\ref{cstB4}}(m) M_m(f_{j,\varepsilon}(t)) + C_{\ref{cstB4}}(m) \left[ 1 + \frac{C_{\ref{cstB4}}(m)}{C_{\ref{cstB3}}(m)} +  \sigma \right]\ , \qquad t\ge 0\ .
\end{equation*}
Integrating the previous differential inequality gives
\begin{equation*}
M_m(f_{j,\varepsilon}(t)) \le e^{-C_{\ref{cstB4}}(m) t} M_m(f^{in}) + \left[ 1 + \frac{C_{\ref{cstB4}}(m)}{C_{\ref{cstB3}}(m)} +  \sigma \right] \left( 1 - e^{-C_{\ref{cstB4}}(m) t} \right)\ , \qquad t\ge 0\ .
\end{equation*}
Therefore, 
\begin{equation*}
M_m(f_{j,\varepsilon}(t)) \le \max\left\{ M_m(f^{in}) , 1 + \frac{C_{\ref{cstB4}}(m)}{C_{\ref{cstB3}}(m)} + \sigma \right\}\ , \qquad t\ge 0\ , 
\end{equation*}
from which Lemma~\ref{lemb2} follows.
\end{proof}

The next step is devoted to the derivation of additional estimates for small sizes but now with a strong dependence on $\varepsilon$.

%%%%%%%%%%%%%%%%
\begin{lemma}\label{lemb3}
There is $\mu_0\ge 1$ depending only on $K_0$, $a_0$, $B$, and $\varrho$ such that
\begin{equation*}
\mathcal{M}_{0,j,\varepsilon} \le \max\left\{ M_0(f^{in}) , \sigma + \frac{\mu_0}{\varepsilon} \right\}\ .
\end{equation*}
\end{lemma}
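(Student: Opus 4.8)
The plan is to estimate $M_0(f_{j,\varepsilon}) = \int_0^\infty f_{j,\varepsilon}(t,x)\,\mathrm{d}x$ by using the identity \eqref{b5} with $\vartheta=\vartheta_0\equiv 1$. For this choice, $\chi_0(x,y) = 1 - 1 - 1 = -1$ for all $(x,y)$, so the coagulation contribution is $-\frac12\int_0^\infty\int_0^\infty K_{j,\varepsilon}(x,y) f_{j,\varepsilon}(t,x) f_{j,\varepsilon}(t,y)\,\mathrm{d}y\mathrm{d}x \le 0$; in particular it can simply be discarded (coagulation never increases the number of particles). For the fragmentation contribution, $N_0(y) = 1 - \int_0^1 B(z)\,\mathrm{d}z = 1 - \mathfrak{b}_0$, and since $\mathfrak{b}_0 > 1$ by \eqref{b95b} we have $-N_0(y) = \mathfrak{b}_0 - 1 > 0$, so fragmentation contributes the positive term $a_0(\mathfrak{b}_0-1)\int_0^\infty (\varepsilon^2 + \min\{x,j\}^\gamma) f_{j,\varepsilon}(t,x)\,\mathrm{d}x$. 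Thus
\begin{equation*}
\frac{d}{dt} M_0(f_{j,\varepsilon}(t)) \le -\frac12 \int_0^\infty\!\!\int_0^\infty K_{j,\varepsilon}(x,y) f_{j,\varepsilon}(t,x) f_{j,\varepsilon}(t,y)\,\mathrm{d}y\mathrm{d}x + a_0(\mathfrak{b}_0-1)\int_0^\infty \big(\varepsilon^2 + \min\{x,j\}^\gamma\big) f_{j,\varepsilon}(t,x)\,\mathrm{d}x.
\end{equation*}

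Next I would bound the fragmentation term. Splitting the integral at $x=1$, on $(1,\infty)$ we use $\min\{x,j\}^\gamma \le x^\gamma$ together with the bound $M_{1+\gamma}(f_{j,\varepsilon}(t)) \le \sigma$ from \eqref{b97} (and also $M_\gamma \le M_0^{?}$—better, simply $\int_1^\infty \min\{x,j\}^\gamma f_{j,\varepsilon} \le \int_1^\infty x^\gamma f_{j,\varepsilon} \le M_{1+\gamma}(f_{j,\varepsilon}(t)) \le \sigma$ since $x\ge 1$ forces $x^\gamma \le x^{1+\gamma}$). On $(0,1)$ we have $\min\{x,j\}^\gamma = x^\gamma \le 1$ and $\varepsilon^2 \le 1$, so this part is bounded by $2\int_0^1 f_{j,\varepsilon}(t,x)\,\mathrm{d}x$. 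To absorb $\int_0^1 f_{j,\varepsilon}$ I would use the coagulation loss term: since $K_{j,\varepsilon}(x,y) \ge 2\varepsilon K_0$ everywhere, restricting both integration variables to $(0,1)$ gives
\begin{equation*}
\frac12 \int_0^\infty\!\!\int_0^\infty K_{j,\varepsilon}(x,y) f_{j,\varepsilon}(t,x) f_{j,\varepsilon}(t,y)\,\mathrm{d}y\mathrm{d}x \ge \varepsilon K_0 \left( \int_0^1 f_{j,\varepsilon}(t,x)\,\mathrm{d}x \right)^2.
\end{equation*}
Collecting the pieces yields, for suitable constants depending only on $K_0,a_0,B,\varrho$,
\begin{equation*}
\frac{d}{dt} M_0(f_{j,\varepsilon}(t)) \le -\varepsilon K_0 \left( \int_0^1 f_{j,\varepsilon}(t,x)\,\mathrm{d}x \right)^2 + C \int_0^1 f_{j,\varepsilon}(t,x)\,\mathrm{d}x + C\sigma.
\end{equation*}

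To close the argument I set $n(t) := \int_0^1 f_{j,\varepsilon}(t,x)\,\mathrm{d}x$. By Young's inequality, $C n(t) \le \frac{\varepsilon K_0}{2} n(t)^2 + \frac{C^2}{2\varepsilon K_0}$, so
\begin{equation*}
\frac{d}{dt} M_0(f_{j,\varepsilon}(t)) \le -\frac{\varepsilon K_0}{2} n(t)^2 + C\sigma + \frac{C^2}{2\varepsilon K_0}.
\end{equation*}
Since $M_0(f_{j,\varepsilon}(t)) = n(t) + \int_1^\infty f_{j,\varepsilon}(t,x)\,\mathrm{d}x \le n(t) + \varrho$ by \eqref{b4}, we have $n(t)^2 \ge \tfrac12 M_0(f_{j,\varepsilon}(t))^2 - \varrho^2$ (Young again), giving a differential inequality of the form $\frac{d}{dt}M_0(f_{j,\varepsilon}) \le -c_1\varepsilon M_0(f_{j,\varepsilon})^2 + c_2\big(\sigma + \varepsilon^{-1}\big)$ with $c_1,c_2$ depending only on $K_0,a_0,B,\varrho$ (using $\varepsilon<1$ and $\sigma>1$ to simplify). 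A standard comparison argument for the scalar ODE $y' = -c_1\varepsilon y^2 + c_2(\sigma+\varepsilon^{-1})$ shows that $M_0(f_{j,\varepsilon}(t))$ stays below $\max\{M_0(f^{in}), \sqrt{c_2(\sigma+\varepsilon^{-1})/(c_1\varepsilon)}\}$, and since $\sigma>1$ and $\varepsilon<1$ the square root is dominated by $\sigma + \mu_0/\varepsilon$ for $\mu_0$ large enough depending only on $K_0,a_0,B,\varrho$. This gives the claimed bound. The only mildly delicate point is keeping track of the $\varepsilon$-dependence so that the final bound has exactly the stated form $\sigma + \mu_0/\varepsilon$ rather than, say, $\sigma\varepsilon^{-1/2}$ or $\varepsilon^{-1}\sigma^{1/2}$; this is handled by the crude estimates $\varepsilon<1$, $\sigma>1$ which let one absorb all lower-order powers into $\sigma + \mu_0/\varepsilon$.
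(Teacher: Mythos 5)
Your proof is correct and reaches the stated bound, but it follows a genuinely different route from the paper, and the very last step deserves a bit more care than the heuristic you give.

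The paper works with the \emph{full} coagulation loss, bounding it from below by $\varepsilon K_0 M_0(f_{j,\varepsilon})^2$ via $K_{j,\varepsilon}\ge 2\varepsilon K_0$, and bounds the fragmentation gain by interpolating $M_\gamma \le \tfrac{\gamma}{1+\gamma}M_{1+\gamma}+\tfrac{1}{1+\gamma}M_0$ together with $M_{1+\gamma}\le\sigma$, giving $\frac{d}{dt}M_0 \le -\varepsilon K_0 M_0^2 + a_0\mathfrak{b}_0\sigma + 2a_0\mathfrak{b}_0 M_0$. It then \emph{converts} the quadratic differential inequality into a linear one by splitting $2a_0\mathfrak{b}_0 M_0 = -2a_0\mathfrak{b}_0 M_0 + 4a_0\mathfrak{b}_0 M_0$ and absorbing $4a_0\mathfrak{b}_0 M_0 \le \varepsilon K_0 M_0^2 + 4a_0^2\mathfrak{b}_0^2/(\varepsilon K_0)$; this produces genuine exponential decay with rate $2a_0\mathfrak{b}_0$ and an equilibrium $\tfrac{\sigma}{2}+2a_0\mathfrak{b}_0/(\varepsilon K_0)$, from which the bound is immediate. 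You instead restrict the coagulation loss to $(0,1)^2$ and work with $n(t)=\int_0^1 f_{j,\varepsilon}$, split the fragmentation integral at $x=1$ rather than interpolating, and then pass from $n$ back to $M_0$ via $n\ge M_0-\varrho$. This is correct, just a longer chain: you would have saved yourself the detour through $n$ (and the $\varrho^2$ correction) by simply using the coagulation loss over the whole quadrant, which gives $\varepsilon K_0 M_0^2$ at once.

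The one point that is underjustified is your closing sentence, ``the square root is dominated by $\sigma+\mu_0/\varepsilon$ ... handled by the crude estimates $\varepsilon<1$, $\sigma>1$.'' The naive route of writing $\sqrt{c_2\sigma/(c_1\varepsilon)+c_2/(c_1\varepsilon^2)}\le \sqrt{c_2\sigma/(c_1\varepsilon)}+\sqrt{c_2/(c_1\varepsilon^2)}$ and then applying AM--GM to $\sqrt{\sigma/\varepsilon}$ produces a bound of the form $C'\sigma+C''/\varepsilon$ with $C'=\tfrac12\sqrt{c_2/c_1}$, which need \emph{not} be $\le 1$ --- and since $\mu_0$ is not allowed to depend on $\sigma$, you cannot absorb $(C'-1)\sigma$ into $\mu_0/\varepsilon$. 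The argument that does close is to square: $(\sigma+\mu_0/\varepsilon)^2=\sigma^2+2\mu_0\sigma/\varepsilon+\mu_0^2/\varepsilon^2$ dominates $\tfrac{c_2}{c_1}\sigma/\varepsilon+\tfrac{c_2}{c_1}/\varepsilon^2$ termwise as soon as $\mu_0\ge\max\{c_2/(2c_1),\sqrt{c_2/c_1},1\}$, with the $\sigma^2$ term giving slack for free. This is exactly the kind of cancellation the paper's linear-decay strategy makes unnecessary, which is why that route is the more robust one here.
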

%%%%%%%%%%%%%%%%

\begin{proof}
It follows from \eqref{b5} with $\vartheta \equiv 1$, \eqref{b95}, \eqref{b97}, and Young's inequality that, for $t\ge 0$,
\begin{align*}
\frac{d}{dt} M_0(f_{j,\varepsilon}(t)) & = - \frac{1}{2} \int_0^\infty \int_0^\infty K_{j,\varepsilon}(x,y) f_{j,\varepsilon}(t,x) f_{j,\varepsilon}(t,y)\ \mathrm{d}y\mathrm{d}x \\
& \qquad - a_0 (1-\mathfrak{b}_0) \int_0^\infty \left( \varepsilon^2 + \min\{x,j\}^\gamma \right) f_{j,\varepsilon}(t,x)\ \mathrm{d}x \\
& \le - \varepsilon K_0 M_0(f_{j,\varepsilon}(t))^2 + a_0 \mathfrak{b}_0 \varepsilon^2 M_0(f_{j,\varepsilon}(t)) + a_0 \mathfrak{b}_0 M_\gamma(f_{j,\varepsilon}(t)) \\
& \le - \varepsilon K_0 M_0(f_{j,\varepsilon}(t))^2 + \frac{\gamma a_0 \mathfrak{b}_0}{1+\gamma} M_{\gamma+1}(f_{j,\varepsilon}(t)) \\
& \qquad + a_0 \mathfrak{b}_0 \left( \frac{1}{1+\gamma} + 1 \right) M_0(f_{j,\varepsilon}(t)) \\
& \le - \varepsilon K_0 M_0(f_{j,\varepsilon}(t))^2 + a_0 \mathfrak{b}_0 \sigma + 2 a_0 \mathfrak{b}_0 M_0(f_{j,\varepsilon}(t))\ .
\end{align*}
By the Cauchy-Schwarz inequality,
\begin{equation*}
4 a_0 \mathfrak{b}_0 M_0(f_{j,\varepsilon}(t)) \le \varepsilon K_0 M_0(f_{j,\varepsilon}(t))^2 + \frac{4 a_0^2 \mathfrak{b}_0^2}{\varepsilon K_0}\ .
\end{equation*}
Hence
\begin{equation*}
\frac{d}{dt} M_0(f_{j,\varepsilon}(t)) + 2 a_0 \mathfrak{b}_0 M_0(f_{j,\varepsilon}(t)) \le a_0 \mathfrak{b}_0 \sigma + \frac{4 a_0^2 \mathfrak{b}_0^2}{\varepsilon K_0}\ , \qquad t\ge 0\ .
\end{equation*}
Integrating this differential inequality, we find
\begin{align*}
M_0(f_{j,\varepsilon}(t)) & \le M_0(f^{in}) e^{-2 a_0 \mathfrak{b}_0 t} + \left( \frac{\sigma}{2} + \frac{2 a_0 \mathfrak{b}_0}{\varepsilon K_0} \right) \left( 1 -  e^{-2 a_0 \mathfrak{b}_0 t} \right) \\ 
& \le \max\left\{ M_0(f^{in}) , \sigma + \frac{2 a_0 \mathfrak{b}_0}{\varepsilon K_0} \right\}
\end{align*}
for $t\ge 0$, as claimed.
\end{proof}

The previous result actually extends to some moments of negative order.

%%%%%%%%%%%%%%%%
\begin{lemma}\label{lemb30}
Let $m\in (m_\star,0)$ and set
\begin{equation}
\varepsilon_{m,\sigma} := \frac{1}{\sigma} \min\left\{ 1 , \frac{K_0 \varrho^2}{4 a_0 \mathfrak{b}_m} \right\}\ , \label{champignac}
\end{equation}
where $m_\star$ and $\sigma$ are defined in \eqref{a17} and \eqref{b98}, respectively. There is $\mu_m>0$ depending only on $K_0$, $a_0$, $\varrho$, $B$, and $m$ such that, if $f^{in}\in X_m$ and $\varepsilon\in (0,\varepsilon_{m,\sigma})$, then
\begin{equation*}
\mathcal{M}_{m,j,\varepsilon} \le \max\left\{ M_m(f^{in}) , \mu_m \sigma^2 \varepsilon^{-(\gamma+2-2m)/\gamma} \right\}\ .
\end{equation*}
We may also assume that $\mu_m\ge \Gamma(m+1) \varrho^m$ when $m>-1$.
\end{lemma}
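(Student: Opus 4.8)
The plan is to derive and integrate a linear differential inequality for $t\mapsto M_m(f_{j,\varepsilon}(t))$; the crux is to harvest a dissipative term proportional to $M_m(f_{j,\varepsilon})$ from the only available lower bound $K_{j,\varepsilon}\ge 2\varepsilon K_0$ on the coagulation kernel. Since $f^{in}\in X_m$, an approximation argument like the one behind \eqref{b4} ensures $f_{j,\varepsilon}\in L^\infty_{\mathrm{loc}}([0,\infty);X_m)$, so that \eqref{b5} may be used with $\vartheta=\vartheta_m$, exactly as in the proof of Lemma~\ref{lemb2}; moreover $\mathfrak{b}_m>1$ by \eqref{b95b} since $m<0$, so the fragmentation contribution to $\frac{d}{dt}M_m(f_{j,\varepsilon})$ is a gain. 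For the coagulation contribution, the convexity of $r\mapsto r^m$ on $(0,\infty)$ gives $(x+y)^m\le 2^{m-1}(x^m+y^m)\le\tfrac12(x^m+y^m)$, hence $-\chi_m(x,y)=x^m+y^m-(x+y)^m\ge\tfrac12(x^m+y^m)$; together with $\chi_m\le0$ and $K_{j,\varepsilon}\ge2\varepsilon K_0$ this yields $\tfrac12\int_0^\infty\int_0^\infty K_{j,\varepsilon}\chi_m f_{j,\varepsilon}f_{j,\varepsilon}\,\mathrm{d}y\mathrm{d}x\le-\varepsilon K_0\,M_m(f_{j,\varepsilon})M_0(f_{j,\varepsilon})$. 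The factor $M_0(f_{j,\varepsilon})$ is then bounded \emph{from below}: for every $R>0$, \eqref{b4} and \eqref{b97} give $\varrho=M_1(f_{j,\varepsilon})\le R\,M_0(f_{j,\varepsilon})+R^{-1}M_2(f_{j,\varepsilon})\le R\,M_0(f_{j,\varepsilon})+R^{-1}\sigma$, and optimising over $R$ gives $M_0(f_{j,\varepsilon}(t))\ge\varrho^2/(4\sigma)$ for all $t\ge0$, so that the coagulation contribution is $\le-\tfrac{\varepsilon K_0\varrho^2}{4\sigma}M_m(f_{j,\varepsilon})$.

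For the fragmentation gain $a_0(\mathfrak{b}_m-1)\int_0^\infty x^m(\varepsilon^2+\min\{x,j\}^\gamma)f_{j,\varepsilon}\,\mathrm{d}x$, I split the $x$-integral at $x=1$: the part over $(1,\infty)$ is at most $M_{1+\gamma}(f_{j,\varepsilon})\le\sigma$ by \eqref{b97} (using $m\le1$), and, with $\eta:=\min\{1,\gamma/(1-m)\}\in(0,1]$, Hölder's inequality gives $\int_0^1 x^{m+\gamma}f_{j,\varepsilon}\le M_m(f_{j,\varepsilon})^{1-\eta}\varrho^\eta$ — here it is essential to interpolate against the bounded first moment, not against $M_0$, so as not to introduce negative powers of $\varepsilon$. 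Collecting these estimates,
\[
\frac{d}{dt}M_m(f_{j,\varepsilon})\le-\frac{\varepsilon K_0\varrho^2}{4\sigma}M_m(f_{j,\varepsilon})+a_0(\mathfrak{b}_m-1)\left(\varepsilon^2M_m(f_{j,\varepsilon})+\varrho^\eta M_m(f_{j,\varepsilon})^{1-\eta}+\sigma\right).
\]
For $\varepsilon<\varepsilon_{m,\sigma}$ one has $a_0(\mathfrak{b}_m-1)\varepsilon<a_0\mathfrak{b}_m\varepsilon<K_0\varrho^2/(4\sigma)$ — this is exactly where the threshold $\varepsilon_{m,\sigma}$ is used — so the term $a_0(\mathfrak{b}_m-1)\varepsilon^2M_m(f_{j,\varepsilon})$ is absorbed by a fraction of the coagulation term, and a Young inequality applied to $\varrho^\eta M_m(f_{j,\varepsilon})^{1-\eta}$ (with exponents $\tfrac1{1-\eta}$, $\tfrac1\eta$, or no Young when $\eta=1$) absorbs half of the remaining dissipation at the cost of a term $C(m)(\sigma/\varepsilon)^{(1-\eta)/\eta}$. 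This leaves $\frac{d}{dt}M_m(f_{j,\varepsilon})\le-c_m\varepsilon\sigma^{-1}M_m(f_{j,\varepsilon})+C(m)[(\sigma/\varepsilon)^{(1-\eta)/\eta}+\sigma]$, which integrates to $M_m(f_{j,\varepsilon}(t))\le\max\{M_m(f^{in}),\,C(m)c_m^{-1}[(\sigma/\varepsilon)^{1/\eta}+\sigma^2/\varepsilon]\}$. Since $\varepsilon<\varepsilon_{m,\sigma}\le\sigma^{-1}<1$ and $1/\eta=\max\{1,(1-m)/\gamma\}$, an elementary comparison of the powers of $\sigma$ and $\varepsilon$ gives $(\sigma/\varepsilon)^{1/\eta}+\sigma^2/\varepsilon\le2\sigma^2\varepsilon^{-(\gamma+2-2m)/\gamma}$, which, in view of \eqref{spirou}, is the asserted bound on $\mathcal{M}_{m,j,\varepsilon}$; finally, enlarging $\mu_m$ if necessary also ensures $\mu_m\ge\Gamma(m+1)\varrho^m$ when $m>-1$.

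The step I expect to be the main obstacle is this extraction of dissipation: since the coagulation kernel is only bounded below by the $\varepsilon$-perturbation $2\varepsilon K_0$, the dissipation one can harvest is no better than $O(\varepsilon\sigma^{-1}M_m(f_{j,\varepsilon}))$, which makes the lower bound $M_0\ge\varrho^2/(4\sigma)$ coming from mass conservation indispensable and forces the final bound to blow up as $\varepsilon\to0$ — consistently with the possibly non-integrable singularity of the stationary solution as $x\to0$. The secondary difficulty is to estimate the fragmentation gain without spoiling this $\varepsilon$-dependence, which is what dictates the interpolation against $M_1$ in the bound $\int_0^1 x^{m+\gamma}f_{j,\varepsilon}\le M_m(f_{j,\varepsilon})^{1-\eta}\varrho^\eta$.
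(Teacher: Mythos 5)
Your proof follows the same overall strategy as the paper's: build a dissipative linear differential inequality for $M_m(f_{j,\varepsilon})$ by harvesting a term of order $-\varepsilon M_m(f_{j,\varepsilon})M_0(f_{j,\varepsilon})$ from the lower bound $K_{j,\varepsilon}\ge 2\varepsilon K_0$, bound $M_0$ from below using mass conservation and $M_2\le\sigma$, and use the threshold $\varepsilon_{m,\sigma}$ to absorb the $\varepsilon^2 M_m$ part of the fragmentation gain. There are, however, two genuine points of divergence. First, the paper works throughout with the \emph{bounded} test function $\vartheta_{m,\delta}(x)=(x+\delta)^m$ and only sends $\delta\to0$ at the very end; this makes \eqref{b5} applicable at once and also yields, as a by-product, the finiteness of $M_m(f_{j,\varepsilon}(t))$ for $m<0$. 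You instead apply \eqref{b5} directly to $\vartheta_m(x)=x^m$, which is unbounded near the origin for $m<0$, and appeal to "an approximation argument like the one behind \eqref{b4}"; the approximation discussed after \eqref{b4} concerns moments of order $m>1$, not $m<0$, so this step is not covered by what precedes it and would need to be written out — the cleanest such argument is precisely the $\delta$-regularization the paper uses. Second, your treatment of the fragmentation gain $\int_0^\infty x^m\min\{x,j\}^\gamma f_{j,\varepsilon}\,\mathrm{d}x$ splits at $x=1$, interpolates $\int_0^1x^{m+\gamma}f_{j,\varepsilon}$ between $M_m$ and $M_1=\varrho$, applies a weighted Young inequality, and then does a final power comparison to arrive at the exponent $\varepsilon^{-(\gamma+2-2m)/\gamma}$. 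The paper instead splits at the $\varepsilon$-adapted threshold $x=\varepsilon^{2/\gamma}$, obtaining $\int_0^\infty (x+\delta)^m\min\{x,j\}^\gamma f_{j,\varepsilon}\,\mathrm{d}x\le\varepsilon^2\int(x+\delta)^m f_{j,\varepsilon}\,\mathrm{d}x+\sigma\varepsilon^{2(m-1)/\gamma}$ in one stroke; the exponent then falls out of the integrated differential inequality with no interpolation, no Young inequality, and no trailing bookkeeping. Your route arrives at the same place but is longer; the $\varepsilon$-dependent cutoff is the natural choice here and is what you should adopt.

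A smaller arithmetic remark: your $R$-split lower bound $M_0(f_{j,\varepsilon})\ge\varrho^2/(4\sigma)$ loses a factor $4$ compared to the direct Cauchy--Schwarz bound $\varrho^2=M_1^2\le M_0M_2\le\sigma M_0$, i.e.\ $M_0\ge\varrho^2/\sigma$, used in the paper. This loss exactly consumes the factor $4$ built into $\varepsilon_{m,\sigma}$, so that for $\varepsilon<\varepsilon_{m,\sigma}$ the absorption of $a_0(\mathfrak{b}_m-1)\varepsilon^2M_m$ leaves only the fraction $1/\mathfrak{b}_m$ of the dissipation; it is the strict gap $\mathfrak{b}_m-1<\mathfrak{b}_m$, and not $\varepsilon<\varepsilon_{m,\sigma}$ alone, that keeps the net coefficient uniformly negative. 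Your write-up asserts that a "fraction" of the coagulation term survives without identifying it; using the sharper $M_0\ge\varrho^2/\sigma$ removes this fragility entirely.
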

%%%%%%%%%%%%%%%%

\begin{proof}
For $\delta\in (0,1)$, we set $\vartheta_{m,\delta}(x) := (x+\delta)^m$, $x>0$, and notice that 
\begin{equation*}
\chi_{\vartheta_{m,\delta}}(x,y) \le - (x+\delta)^m \le 0\ , \qquad (x,y)\in (0,\infty)^2\ .
\end{equation*}	
Let $\varepsilon\in (0,\varepsilon_{m,\sigma})$ and $t>0$. We infer from \eqref{b5} with $\vartheta=\vartheta_{m,\delta}$ that 
\begin{align*}
\frac{d}{dt} \int_0^\infty \vartheta_{m,\delta}(x) f_{j,\varepsilon}(t,x)\ \mathrm{d}x & \le - \varepsilon K_0 \int_0^\infty \int_0^\infty (x+\delta)^m f_{j,\varepsilon}(t,x) f_{j,\varepsilon}(t,y)\ \mathrm{d}y\mathrm{d}x \\
& \qquad + \int_0^\infty a_{j,\varepsilon}(y) f_{j,\varepsilon}(t,y) \int_0^y \vartheta_{m,\delta}(x) b(x,y)\ \mathrm{d}x\mathrm{d}y\ . 
\end{align*}
On the one hand, by \eqref{b4}, \eqref{b97}, and the Cauchy-Schwarz inequality,
\begin{equation*}
\varrho^2 = M_1(f_{j,\varepsilon}(t))^2 \le M_0(f_{j,\varepsilon}(t)) M_2(f_{j,\varepsilon}(t)) \le \sigma M_0(f_{j,\varepsilon}(t))\ ,
\end{equation*}
so that
\begin{align*}
U_{j,\varepsilon}(t) & := \int_0^\infty \int_0^\infty (x+\delta)^m f_{j,\varepsilon}(t,x) f_{j,\varepsilon}(t,y)\ \mathrm{d}y\mathrm{d}x \\
& = M_0(f_{j,\varepsilon}(t)) \int_0^\infty (x+\delta)^m f_{j,\varepsilon}(t,x)\ \mathrm{d}x \\
& \ge \frac{\varrho^2}{\sigma} \int_0^\infty \vartheta_{m,\delta}(x) f_{j,\varepsilon}(t,x)\ \mathrm{d}x \ .
\end{align*}
On the other hand, we infer from \eqref{a15b}, \eqref{b97}, and the negativity of $m$ that
\begin{align*}
V_{j,\varepsilon}(t) & := \int_0^\infty a_{j,\varepsilon}(y) f_{j,\varepsilon}(t,y) \int_0^y \vartheta_{m,\delta}(x) b(x,y)\ \mathrm{d}x\mathrm{d}y \\
& = \int_0^\infty a_{j,\varepsilon}(y) f_{j,\varepsilon}(t,y) \int_0^1 (yz+\delta)^m B(z)\ \mathrm{d}z\mathrm{d}y \\
& \le \int_0^\infty a_{j,\varepsilon}(y) f_{j,\varepsilon}(t,y) \int_0^1 (yz+\delta z)^m B(z)\ \mathrm{d}z\mathrm{d}y \\
& \le a_0 \mathfrak{b}_m \int_0^\infty (x+\delta)^m \left( \varepsilon^2 + \min\{x,j\}^\gamma \right) f_{j,\varepsilon}(t,x)\ \mathrm{d}x \ .
\end{align*}
Since
\begin{align*}
\int_0^\infty (x+\delta)^m \min\{x,j\}^\gamma f_{j,\varepsilon}(t,x)\ \mathrm{d}x & \le \varepsilon^2 \int_0^{\varepsilon^{2/\gamma}} (x+\delta)^m f_{j,\varepsilon}(t,x)\ \mathrm{d}x \\
& \qquad + \varepsilon^{2(m-1)/\gamma} \int_{\varepsilon^{2/\gamma}}^\infty x^{\gamma+1} f_{j,\varepsilon}(t,x)\ \mathrm{d}x \\
& \le \varepsilon^2 \int_0^\infty (x+\delta)^m f_{j,\varepsilon}(t,x)\ \mathrm{d}x + \varepsilon^{2(m-1)/\gamma} \mathcal{M}_{\gamma+1,j,\varepsilon} \\
& \le \varepsilon^2 \int_0^\infty (x+\delta)^m f_{j,\varepsilon}(t,x)\ \mathrm{d}x + \sigma \varepsilon^{2(m-1)/\gamma}
\end{align*}
by \eqref{b97}, we further obtain
\begin{equation*}
V_{j,\varepsilon}(t) \le a_0 \mathfrak{b}_m \left( 2 \varepsilon^2 \int_0^\infty \vartheta_{m,\delta}(x) f_{j,\varepsilon}(t,x)\ \mathrm{d}x + \sigma \varepsilon^{2(m-1)/\gamma} \right)\ . 
\end{equation*}
Collecting the previous estimates and using the definition \eqref{champignac} of $\varepsilon_{m,\sigma}$ lead us to the differential inequality 
\begin{align*}
\frac{d}{dt} \int_0^\infty \vartheta_{m,\delta}(x) f_{j,\varepsilon}(t,x)\ \mathrm{d}x & \le - \frac{\varepsilon K_0 \varrho^2}{\sigma} \int_0^\infty \vartheta_{m,\delta}(x) f_{j,\varepsilon}(t,x)\ \mathrm{d}x \\
& \qquad + 2 a_0 \mathfrak{b}_m \varepsilon^2 \int_0^\infty \vartheta_{m,\delta}(x) f_{j,\varepsilon}(t,x)\ \mathrm{d}x + a_0 \mathfrak{b}_m \sigma \varepsilon^{2(m-1)/\gamma} \\
& \le 2 a_0 \mathfrak{b}_m \varepsilon (\varepsilon-2\varepsilon_{m,\sigma}) \int_0^\infty \vartheta_{m,\delta}(x) f_{j,\varepsilon}(t,x)\ \mathrm{d}x + a_0 \mathfrak{b}_m \sigma \varepsilon^{2(m-1)/\gamma} \\
&  \le - 2 a_0 \mathfrak{b}_m \varepsilon \varepsilon_{m,\sigma} \int_0^\infty \vartheta_{m,\delta}(x) f_{j,\varepsilon}(t,x)\ \mathrm{d}x + a_0 \mathfrak{b}_m \sigma \varepsilon^{2(m-1)/\gamma}\ .
\end{align*}
After integration with respect to time, we end up with
\begin{align*}
\int_0^\infty \vartheta_{m,\delta}(x) f_{j,\varepsilon}(t,x)\ \mathrm{d}x & \le e^{- 2 a_0 \mathfrak{b}_m \varepsilon \varepsilon_{m,\sigma} t} \int_0^\infty \vartheta_{m,\delta}(x) f^{in}(x)\ \mathrm{d}x \\
& \qquad + \frac{\sigma \varepsilon^{-(\gamma+2-2m)/\gamma}}{2 \varepsilon_{m,\sigma}} \left( 1 - e^{- 2 a_0 \mathfrak{b}_m \varepsilon \varepsilon_{m,\sigma} t} \right) \\
& \le \max\left\{ M_m(f^{in}) , \mu_m \sigma^2 \varepsilon^{-(\gamma+2-2m)/\gamma} \right\}\ , \qquad t\ge 0\ .
\end{align*} 
Since the right-hand side of the previous inequality does not depend on $\delta\in (0,1)$ and is finite, we may pass to the limit as $\delta\to 0$ and thereby complete the proof of Lemma~\ref{lemb30}. 
\end{proof}

%%%%%%%%%%%%%%%%
\begin{remark}\label{rem1}
It is worth mentioning here that the positivity of $\gamma$ is only used in the proof of Lemma~\ref{lemb30}. 
\end{remark}
%%%%%%%%%%%%%%%%

%%%%%%%%%%%%%%%%
%%%%%%%%%%%%%%%%
\subsection{Integrability Estimates}\label{sec2.3}
%%%%%%%%%%%%%%%%
%%%%%%%%%%%%%%%%

We now turn to weighted $L^p$-estimates and actually derive two different estimates, one depending on $\varepsilon$ but not on $t$, and the other one depending on $t$ but not on $\varepsilon$. For $m\ge 0$, $p\ge 1$, and $h\in L^p((0,\infty),x^m\mathrm{d}x)$, we set 
\begin{equation}
L_{m,p}(h) := \int_0^\infty x^m |h(x)|^p\ \mathrm{d}x\ . \label{fantasio}
\end{equation}

%%%%%%%%%%%%%%%%
\begin{lemma}\label{lemb3b}
Consider $m\in (\lambda,1)$ and $p\in (1,p_0]$ satisfying 
\begin{equation}
1<p < \frac{m+1}{\lambda+1} \;\text{ and }\; p \le\frac{m+\gamma}{\gamma}\ , \label{b14}
\end{equation}
and assume that $f^{in}\in L^p((0,\infty),x^m\mathrm{d}x)$. Then
\begin{equation}
L_{m,p}(f_{j,\varepsilon}(t)) \le \max\left\{ L_{m,p}(f^{in}) , \frac{S_{j,\varepsilon}(m,p)}{\varepsilon^2} \right\} \label{b15}
\end{equation}
and
\begin{equation}
\frac{1}{t} \int_0^t  \int_0^\infty x^m \min\{x,j\}^\gamma (f_{j,\varepsilon}(s,x))^p\ \mathrm{d}x\mathrm{d}s \le \frac{1}{a_0 t} L_{m,p}(f^{in}) +  S_{j,\varepsilon}(m,p) \ , \label{b16}
\end{equation}
where
\begin{equation*}
S_{j,\varepsilon}(m,p) := 2^p \mathcal{B}_p^p \left( \mathcal{M}_{(m+1+\gamma-p)/p,j,\varepsilon}^p + \mathcal{M}_{(m+1+\gamma p-p)/p,j,\varepsilon}^p + \varepsilon^2 \mathcal{M}_{(m+1-p)/p,j,\varepsilon}^p \right)
\end{equation*}
and $\mathcal{B}_p$ is defined in \eqref{b95a}.
\end{lemma}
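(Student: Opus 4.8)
The plan is to derive, for every $t>0$, the differential inequality
\begin{equation*}
\frac{d}{dt} L_{m,p}(f_{j,\varepsilon}(t)) + a_0 \int_0^\infty x^m \min\{x,j\}^\gamma f_{j,\varepsilon}(t,x)^p\ \mathrm{d}x + a_0 \varepsilon^2 L_{m,p}(f_{j,\varepsilon}(t)) \le a_0\, S_{j,\varepsilon}(m,p)\ ,
\end{equation*}
after which \eqref{b15} follows by discarding the second term on the left-hand side and integrating the resulting linear differential inequality as in the proof of Lemma~\ref{lemb3}, while \eqref{b16} follows by discarding the third term on the left-hand side, integrating over $(0,t)$, and dropping the non-negative term $L_{m,p}(f_{j,\varepsilon}(t))$. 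To obtain it, one multiplies \eqref{b70a} by $p x^m f_{j,\varepsilon}^{p-1}$ and integrates over $(0,\infty)$; since $f_{j,\varepsilon}$ is a priori only known to belong to $C^1([0,\infty);X_0)\cap L^\infty((0,T);X_\mu)$ for $\mu>1$, this computation is justified by first replacing $f_{j,\varepsilon}^{p-1}$ with $(\min\{f_{j,\varepsilon},n\})^{p-1}$, using the boundedness of $K_{j,\varepsilon}$ and $a_{j,\varepsilon}$, and then letting $n\to\infty$ by monotone convergence. This leads to $\frac1p\frac{d}{dt}L_{m,p}(f_{j,\varepsilon}(t)) = \mathcal{T}_1+\mathcal{T}_2+\mathcal{T}_3+\mathcal{T}_4$, where $\mathcal{T}_1,\mathcal{T}_2$ are the contributions of the gain and loss parts of $\mathcal{C}_{j,\varepsilon}$ and $\mathcal{T}_3,\mathcal{T}_4$ those of $\mathcal{F}_{j,\varepsilon}$. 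By \eqref{b2}, $\mathcal{T}_4 = -a_0 \int_0^\infty x^m \big( \min\{x,j\}^\gamma + \varepsilon^2 \big) f_{j,\varepsilon}^p\ \mathrm{d}x$, which is the source of the two negative terms of the target inequality and, as $p>1$, provides a strictly larger negative quantity, hence a surplus to absorb $\mathcal{T}_1,\mathcal{T}_2,\mathcal{T}_3$; the term $-a_0\varepsilon^2 L_{m,p}(f_{j,\varepsilon})$ rests on the lower bound $a_{j,\varepsilon}\ge a_0\varepsilon^2$, which is precisely why the overall fragmentation rate is shifted in \eqref{b2}.

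For the coagulation part $\mathcal{T}_1+\mathcal{T}_2$, using $K_{j,\varepsilon}\ge 2\varepsilon K_0$ together with the subadditivity of $x\mapsto x^m$ (valid as $m<1$) and Young's inequality, the contributions of the additive constant $2\varepsilon K_0$ in \eqref{b1} to $\mathcal{T}_1$ and $\mathcal{T}_2$ combine into a non-positive quantity and cancel; this cancellation is where the positive lower bound on $K_{j,\varepsilon}$ enters. The remaining part of $\mathcal{T}_1$ is then controlled by means of \eqref{b950}, the convexity estimate \eqref{spip}, Young's (convolution) inequality, and the moment bounds of Section~\ref{sec2.1} (Lemma~\ref{lemb1}, Corollary~\ref{corb1b}, \eqref{b97}), arguing as in \cite[Lemma~3.1]{FoLa05} and \cite{BLLxx}: the first condition in \eqref{b14} keeps the orders of the intervening moments strictly above $\lambda$, while the second condition in \eqref{b14} is what allows the weighted-$L^p$ remainders to be absorbed into $a_0\int x^m\min\{x,j\}^\gamma f_{j,\varepsilon}^p$ without generating $j$-dependent constants, the leftover constant being dominated by $a_0\,S_{j,\varepsilon}(m,p)$.

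The core of the proof is the estimate of the fragmentation gain term
\begin{equation*}
\mathcal{T}_3 = a_0 \int_0^\infty \int_x^\infty x^m f_{j,\varepsilon}^{p-1}(x)\, \big( \min\{y,j\}^\gamma + \varepsilon^2 \big)\, b(x,y)\, f_{j,\varepsilon}(y)\ \mathrm{d}y\mathrm{d}x\ .
\end{equation*}
Writing $b(x,y)=B(x/y)/y$ and performing the change of variables $x=yz$, $z\in(0,1)$, one applies Young's inequality to the product $f_{j,\varepsilon}^{p-1}(yz)\,f_{j,\varepsilon}(y)$ with a parameter depending on $z$ --- this $z$-dependence being indispensable in order to balance negative powers of $z$ against $B$, which by \eqref{a15c} is only controlled in $L^1((0,1),z\,\mathrm{d}z)\cap L^{p_0}(0,1)$ --- and reorganises the two resulting contributions, performing the further substitution $u=yz$ on the second. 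With the parameter chosen appropriately, the contribution carrying $f_{j,\varepsilon}(y)^p$ reduces to a small multiple of $\int_0^\infty x^m\big( \min\{x,j\}^\gamma+\varepsilon^2 \big) f_{j,\varepsilon}^p\ \mathrm{d}x$ and is absorbed into $\mathcal{T}_4$, while the contribution carrying $f_{j,\varepsilon}(u)^p$ is estimated, by Hölder's inequality in the $z$-variable together with $B\in L^{p_0}(0,1)$, by $\mathcal{B}_p^p$ times a linear combination of $\mathcal{M}_{(m+1+\gamma-p)/p,j,\varepsilon}^p$, $\mathcal{M}_{(m+1+\gamma p-p)/p,j,\varepsilon}^p$, and $\varepsilon^2\,\mathcal{M}_{(m+1-p)/p,j,\varepsilon}^p$. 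The first inequality in \eqref{b14} ensures that $(m+1-p)/p>\lambda$, so that the three moments fall within the scope of Section~\ref{sec2.1} and $\mathfrak{b}_{(m+1-p)/p}$ is finite (and $\le\mathcal{B}_p$ by Hölder's inequality and \eqref{a15c}); the second inequality in \eqref{b14} guarantees that the powers of $\min\{\cdot,j\}$ arising from raising $\min\{\cdot,j\}^\gamma+\varepsilon^2$ to the power $p$ stay dominated by $\min\{\cdot,j\}^\gamma$ near $x=0$ and collapse to admissible moments for large $x$, again with $j$-independent constants.

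The step I expect to be the main obstacle is exactly this estimate of $\mathcal{T}_3$ (and, secondarily, the self-cancellation and absorption in $\mathcal{T}_1+\mathcal{T}_2$): the Young parameter and the intervening weights must be calibrated simultaneously so that the admissible singularity of $B$ at $z=0$ is absorbed, the powers of $x$ match exactly the three orders appearing in $S_{j,\varepsilon}(m,p)$, every constant is independent of $j\ge 2$, and the residual $x^m\min\{x,j\}^\gamma f_{j,\varepsilon}^p$ and $\varepsilon^2 x^m f_{j,\varepsilon}^p$ terms are genuinely controlled by $\mathcal{T}_4$ rather than competing with it. The rigorous justification of the $L^p$-differentiation through the truncation argument described above is a further, more routine, technical point.
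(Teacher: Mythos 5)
The critical gap is in your treatment of the coagulation contribution. The paper's argument does not split $K_{j,\varepsilon}$ into the constant $2\varepsilon K_0$ and the truncated power-law $K(\min\{x,j\},\min\{y,j\})$ and handle them separately; it proves that the \emph{entire} coagulation contribution $P_{j,\varepsilon}(t)=p\int x^m f_{j,\varepsilon}^{p-1}(\mathcal{C}_{j,\varepsilon}f_{j,\varepsilon})\,\mathrm{d}x$ is non-positive, and the $K$ part never appears on the right-hand side at all. The mechanism is: the subadditivity $(x+y)^m\le x^m+y^m$ (since $m<1$) together with Fubini turns the gain term into $p\int\int x^m K_{j,\varepsilon}(x,y)f_{j,\varepsilon}(t,x+y)^{p-1}f_{j,\varepsilon}(t,x)f_{j,\varepsilon}(t,y)\,\mathrm{d}y\mathrm{d}x$; then the convexity inequality $pU^{p-1}V\le (p-1)U^p+V^p$ converts this into $(p-1)\int\int x^m K_{j,\varepsilon}(x,y)f_{j,\varepsilon}(t,x+y)^p f_{j,\varepsilon}(t,y)\,\mathrm{d}y\mathrm{d}x$ minus the corresponding loss, and the key structural fact is the monotonicity
\begin{equation*}
(x-y)^m K_{j,\varepsilon}(x-y,y)\le x^m K_{j,\varepsilon}(x,y)\ ,\qquad 0<y<x\ ,
\end{equation*}
which holds because $x\mapsto x^m$ is non-decreasing and $x\mapsto K_{j,\varepsilon}(x,y)$ is non-decreasing (since $\alpha,\beta\ge 0$). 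This gives $P_{j,\varepsilon}(t)\le 0$ outright, with no absorption and no reliance on the lower bound $K_{j,\varepsilon}\ge 2\varepsilon K_0$. Your claim that the positive lower bound on $K_{j,\varepsilon}$ is what makes the coagulation terms cancel is therefore incorrect; that lower bound plays no role in the sign of $P_{j,\varepsilon}$ (it matters in Lemmas~\ref{lemb3} and~\ref{lemb30}). Your plan to keep the constant-kernel cancellation and then \emph{bound} the $K$ part of $\mathcal{T}_1$ via \eqref{b950}, \eqref{spip}, Young's convolution inequality and the moment bounds, absorbing the remainder into $\mathcal{T}_4$, does not work: $\mathcal{T}_1$ is cubic in $f_{j,\varepsilon}$ with a factor $f_{j,\varepsilon}^{p-1}$, so any estimate would have to produce weighted $L^p$ quantities, not just moments, and those would have to be absorbed into $\mathcal{T}_4$ with a prefactor you cannot control uniformly (the coagulation constant $K_0$ is not small relative to $a_0$, nor is the coagulation term small compared to the fragmentation loss). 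Moreover $S_{j,\varepsilon}(m,p)$ consists of moments only, so it cannot dominate such an estimate. Finally, you invoke \eqref{spip}, which is a superadditivity bound on $\chi_m$ valid for $m\ge 2$, whereas here $m\in(\lambda,1)$; it is not the relevant tool for the $L^p$ estimate (the relevant tool is the $(x+y)^m\le x^m+y^m$ inequality used as above, not $\chi_m$).

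Your treatment of the fragmentation gain $\mathcal{T}_3$, by the change of variables $x=yz$ followed by a $z$-dependent Young inequality and a further substitution $u=yz$, is a genuinely different route from the paper's (which uses H\"older's inequality on the inner integral with the weight $x^{m/p}\min\{x,j\}^{-\gamma(p-1)/p}B(x/y)$ against $x^{m(p-1)/p}\min\{x,j\}^{\gamma(p-1)/p}f_{j,\varepsilon}^{p-1}$, then Young). Your route is plausible in principle and could be made to reproduce the same three moment orders, but as sketched it does not make clear how the three exponents $(m+1+\gamma-p)/p$, $(m+1+\gamma p-p)/p$, $(m+1-p)/p$ arise, nor that the constants are independent of $j$; that would need to be spelled out. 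The truncation-by-$\min\{f_{j,\varepsilon},n\}^{p-1}$ justification of the $L^p$-differentiation is a reasonable alternative to the paper's implicit approximation argument, and the final integration steps for \eqref{b15} and \eqref{b16} from the target differential inequality are as in the paper. But as written the coagulation part is a genuine gap: you are missing the monotonicity observation that makes the whole coagulation contribution sign-definite and is the actual mechanism here.
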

%%%%%%%%%%%%%%%%

\begin{proof} We first note that \eqref{a15a} and \eqref{b14} ensure that 
\begin{equation*}
\frac{m+1+\gamma p -p}{p} \ge \frac{m+1+\gamma-p}{p} \ge \frac{m+1-p}{p} > \lambda\ ,
\end{equation*}
so that $S_{j,\varepsilon}(m,p)$ is well-defined and finite by Lemma~\ref{lemb2}. 

Let $t>0$. We first deal with the contribution of the coagulation term. As already observed in \cite{BLLxx, Dubo94b, LaMi02c, MiRR03}, the sublinearity of $x\mapsto x^m$ and the monotonicity of $x\mapsto K_{j,\varepsilon}(x,y)$ for all $y>0$ allow us to show that this contribution is negative. Indeed, it follows from the inequality
\begin{equation*}
(x+y)^m \le x^m + y^m \ , \qquad (x,y)\in (0,\infty)^2\ , 
\end{equation*}
the symmetry of $K_{j,\varepsilon}$, and Fubini's theorem that
\begin{align*}
P_{j,\varepsilon}(t) & := p \int_0^\infty x^m (f_{j,\varepsilon}(t,x))^{p-1} (\mathcal{C}_{j,\varepsilon} f_{j,\varepsilon})(t,x)\ \mathrm{d}x \\
& = \frac{p}{2} \int_0^\infty \int_0^\infty (x+y)^m K_{j,\varepsilon}(x,y) (f_{j,\varepsilon}(t,x+y))^{p-1} f_{j,\varepsilon}(t,x) f_{j,\varepsilon}(t,y)\ \mathrm{d}y\mathrm{d}x \\
& \quad - p \int_0^\infty \int_0^\infty x^m K_{j,\varepsilon}(x,y) (f_{j,\varepsilon}(t,x))^p f_{j,\varepsilon}(t,y)\ \mathrm{d}y\mathrm{d}x \\
& \le \frac{p}{2} \int_0^\infty \int_0^\infty \left( x^m + y^m \right) K_{j,\varepsilon}(x,y) (f_{j,\varepsilon}(t,x+y))^{p-1} f_{j,\varepsilon}(t,x) f_{j,\varepsilon}(t,y)\ \mathrm{d}y\mathrm{d}x \\
& \quad - p \int_0^\infty \int_0^\infty x^m K_{j,\varepsilon}(x,y) (f_{j,\varepsilon}(t,x))^p f_{j,\varepsilon}(t,y)\ \mathrm{d}y\mathrm{d}x \\
& = p \int_0^\infty \int_0^\infty x^m K_{j,\varepsilon}(x,y) (f_{j,\varepsilon}(t,x+y))^{p-1} f_{j,\varepsilon}(t,x) f_{j,\varepsilon}(t,y)\ \mathrm{d}y\mathrm{d}x \\
& \quad - p \int_0^\infty \int_0^\infty x^m K_{j,\varepsilon}(x,y) (f_{j,\varepsilon}(t,x))^p f_{j,\varepsilon}(t,y)\ \mathrm{d}y\mathrm{d}x \\
\ .
\end{align*}
We next deduce from the convexity inequality
\begin{equation*}
p U^{p-1} V \le (p-1) U^p + V^p\ , \qquad (U,V)\in [0,\infty)^2\ ,
\end{equation*} that
\begin{align*}
P_{j,\varepsilon}(t) & \le (p-1) \int_0^\infty \int_0^\infty x^m K_{j,\varepsilon}(x,y) (f_{j,\varepsilon}(t,x+y))^p f_{j,\varepsilon}(t,y)\ \mathrm{d}y\mathrm{d}x \\
& \quad - (p-1) \int_0^\infty \int_0^\infty x^m K_{j,\varepsilon}(x,y) (f_{j,\varepsilon}(t,x))^p f_{j,\varepsilon}(t,y)\ \mathrm{d}y\mathrm{d}x \\
& \le (p-1) \int_0^\infty \int_y^\infty (x-y)^m K_{j,\varepsilon}(x-y,y) (f_{j,\varepsilon}(t,x))^p f_{j,\varepsilon}(t,y)\ \mathrm{d}x\mathrm{d}y \\
& \quad - (p-1) \int_0^\infty \int_y^\infty x^m K_{j,\varepsilon}(x,y) (f_{j,\varepsilon}(t,x))^p f_{j,\varepsilon}(t,y)\ \mathrm{d}x\mathrm{d}y \ .
\end{align*}
Now, the monotonicity of $x\mapsto x^m$ and $x\mapsto K_{j,\varepsilon}(x,y)$ implies that 
\begin{equation*}
(x-y)^m K_{j,\varepsilon}(x-y,y) \le x^m K_{j,\varepsilon}(x,y)\ , \qquad 0 < y < x \ .
\end{equation*} 
Consequently,
\begin{equation}
P_{j,\varepsilon}(t) \le 0 \ . \label{b89}
\end{equation}
Concerning the contribution of the fragmentation term, it reads 
\begin{align}
Q_{j,\varepsilon}(t) & := p \int_0^\infty x^m (f_{j,\varepsilon}(t,x))^{p-1} (\mathcal{F}_{j,\varepsilon}f_{j,\varepsilon})(t,x)\ \mathrm{d}x \nonumber \\
& = - p a_0 \Lambda_j(f_{j,\varepsilon}(t)) - p a_0 \varepsilon^2 L_{m,p}(f_{j,\varepsilon}(t)) + R_{j,\varepsilon}(t)\ , \label{b88} 
\end{align}
where
\begin{equation*}
\Lambda_j(f_{j,\varepsilon}(t)) := \int_0^\infty x^m \min\{x,j\}^\gamma (f_{j,\varepsilon}(t,x))^p\ \mathrm{d}x
\end{equation*}
and 
\begin{align*}
R_{j,\varepsilon}(t) & := p \int_0^\infty a_{j,\varepsilon}(y) f_{j,\varepsilon}(t,y) \int_0^y x^m b(x,y) (f_{j,\varepsilon}(t,x))^{p-1}\ \mathrm{d}x\mathrm{d}y \\
& = p a_0 \int_0^\infty \min\{y,j\}^\gamma y^{-1} f_{j,\varepsilon}(t,y) \int_0^y x^m B\left( xy^{-1} \right) (f_{j,\varepsilon}(t,x))^{p-1}\ \mathrm{d}x\mathrm{d}y \\
& \quad + p a_0 \varepsilon^2 \int_0^\infty y^{-1} f_{j,\varepsilon}(t,y) \int_0^y x^m B\left( xy^{-1} \right) (f_{j,\varepsilon}(t,x))^{p-1}\ \mathrm{d}x\mathrm{d}y\ .
\end{align*}
We infer from H\"older's inequality that 
\begin{align*}
& \int_0^y x^m B\left( xy^{-1} \right) (f_{j,\varepsilon}(t,x))^{p-1}\ \mathrm{d}x \\
& \qquad = \int_0^y x^{m/p} \min\{x,j\}^{-\gamma(p-1)/p} B\left( xy^{-1} \right) x^{m(p-1)/p} \min\{x,j\}^{\gamma(p-1)/p} (f_{j,\varepsilon}(t,x))^{p-1}\ \mathrm{d}x\\
& \qquad \le \left( \int_0^y x^m \min\{x,j\}^{-\gamma(p-1)} [B\left( x y^{-1} \right)]^p\ \mathrm{d}x \right)^{1/p} \left( \int_0^y x^m \min\{x,j\}^\gamma (f_{j,\varepsilon}(t,x))^p\ \mathrm{d}x \right)^{(p-1)/p} \ .
\end{align*}
Since
\begin{align*}
& \left( \int_0^y x^m \min\{x,j\}^{-\gamma(p-1)} [B\left( x y^{-1} \right)]^p\ \mathrm{d}x \right)^{1/p}\\
& \qquad \le \left( \int_0^y \left( x^{m-\gamma(p-1)} + x^m \right) [B\left( x y^{-1} \right)]^p\ \mathrm{d}x \right)^{1/p} \\
& \qquad \le y^{(m+1-\gamma (p-1))/p} \left( \int_0^1 z^{m-\gamma(p-1)} B(z)^p\ \mathrm{d}z \right)^{1/p} \\
& \qquad\quad + y^{(m+1)/p} \left( \int_0^1 z^m B(z)^p\ \mathrm{d}z \right)^{1/p} \ ,
\end{align*}
we further obtain
\begin{align*}
& \int_0^y x^m B\left( xy^{-1} \right) (f_{j,\varepsilon}(t,x))^{p-1}\ \mathrm{d}x \\
& \qquad \le  y^{(m+1-\gamma (p-1))/p} \left( \int_0^1 z^{m-\gamma(p-1)} B(z)^p\ \mathrm{d}z \right)^{1/p} \Lambda_j(f_{j,\varepsilon}(t))^{(p-1)/p} \\
& \qquad\quad + y^{(m+1)/p} \left( \int_0^1 z^m B(z)^p\ \mathrm{d}z \right)^{1/p} \Lambda_j(f_{j,\varepsilon}(t))^{(p-1)/p}\ .
\end{align*}
Similarly, by H\"older's inequality,
\begin{align*}
& \int_0^y x^m B\left( xy^{-1} \right) (f_{j,\varepsilon}(t,x))^{p-1}\ \mathrm{d}x \\
& \qquad = \int_0^y x^{m/p} B\left( xy^{-1} \right) x^{m(p-1)/p} (f_{j,\varepsilon}(t,x))^{p-1}\ \mathrm{d}x\\
& \qquad \le \left( \int_0^y x^{m} [B\left( x y^{-1} \right)]^p\ \mathrm{d}x \right)^{1/p} \left( \int_0^y x^{m} (f_{j,\varepsilon}(t,x))^p\ \mathrm{d}x \right)^{(p-1)/p} \\
& \qquad \le y^{(m+1)/p} \left( \int_0^1 z^{m} [B(z)]^p\ \mathrm{d}z \right)^{1/p} \left[ L_{m,p}(f_{j,\varepsilon}(t)) \right]^{(p-1)/p} \ .
\end{align*}
Since $0\le m-\gamma(p-1)\le m$ and $p\in [1,p_0]$ by \eqref{b14}, we infer from \eqref{b95a} that 
\begin{equation*}
\int_0^1 z^{m} B(z)^p\ \mathrm{d}z \le \int_0^1 z^{m-\gamma(p-1)} B(z)^p\ \mathrm{d}z \le \mathcal{B}_p^p < \infty\ .
\end{equation*}
Gathering the above estimates and using Young's inequality, we end up with
\begin{align}
R_{j,\varepsilon}(t) & \le p a_0 \mathcal{B}_p M_{(m+1+\gamma-p)/p}(f_{j,\varepsilon}(t)) \Lambda_j(f_{j,\varepsilon}(t))^{(p-1)/p} \nonumber \\ 
& \qquad + p a_0 \mathcal{B}_p M_{(m+1+\gamma p-p)/p}(f_{j,\varepsilon}(t)) \Lambda_j(f_{j,\varepsilon}(t))^{(p-1)/p} \nonumber \\ 
& \qquad + p a_0 \varepsilon^2 \mathcal{B}_p M_{(m+1-p)/p}(f_{j,\varepsilon}(t)) \left[ L_{m,p}(f_{j,\varepsilon}(t)) \right]^{(p-1)/p} \nonumber \\ 
& \le \frac{p-1}{2} a_0 \Lambda_j(f_{j,\varepsilon}(t)) + 2^{p-1} a_0 \mathcal{B}_p^p \mathcal{M}_{(m+1+\gamma-p)/p,j,\varepsilon}^p \nonumber \\
& \qquad + \frac{p-1}{2} a_0 \Lambda_j(f_{j,\varepsilon}(t)) + 2^{p-1} a_0 \mathcal{B}_p^p \mathcal{M}_{(m+1+\gamma p-p)/p,j,\varepsilon}^p \nonumber \\
& \qquad + (p-1) a_0 \varepsilon^2 L_{m,p}(f_{j,\varepsilon}(t)) + a_0 \varepsilon^2 \mathcal{B}_p^p \mathcal{M}_{(m+1-p)/p,j,\varepsilon}^p \ . \label{b87}
\end{align}
We then deduce from \eqref{b88} and \eqref{b87} that
\begin{equation}
Q_{j,\varepsilon}(t) \le - a_0  \left[ \Lambda_j(f_{j,\varepsilon}(t)) + \varepsilon^2 L_{m,p}(f_{j,\varepsilon}(t)) \right] + a_0 S_{j,\varepsilon}(m,p)\ . \label{b86}
\end{equation}
Combining \eqref{b70}, \eqref{b89}, and \eqref{b86} leads us to the differential inequality
\begin{equation}
\frac{d}{dt} L_{m,p}(f_{j,\varepsilon}(t)) + a_0 \left[ \Lambda_j(f_{j,\varepsilon}(t)) + \varepsilon^2 L_{m,p}(f_{j,\varepsilon}(t)) \right] \le a_0 S_{j,\varepsilon}(m,p) \label{b85}
\end{equation} 
for $t>0$. We first infer from \eqref{b85} that, for $t>0$, 
\begin{equation*}
\frac{d}{dt} L_{m,p}(f_{j,\varepsilon}(t)) + a_0 \varepsilon^2 L_{m,p}(f_{j,\varepsilon}(t)) \le a_0 S_{j,\varepsilon}(m,p)\ .
\end{equation*}
Hence, after integration with respect to time,
\begin{align*}
\int_0^\infty L_{m,p}(f_{j,\varepsilon}(t)) & \le e^{-a_0 \varepsilon^2 t} L_{m,p}(f^{in}) + \frac{S_{j,\varepsilon}(m,p)}{\varepsilon^2} \left( 1 - e^{-a_0 \varepsilon^2 t} \right) \\
& \le \max\left\{ L_{m,p}(f^{in}) , \frac{S_{j,\varepsilon}(m,p)}{\varepsilon^2} \right\}\ .
\end{align*}
from which \eqref{b15} follows. We also infer from \eqref{b85} that, for $t>0$,
\begin{equation*}
\frac{d}{dt} L_{m,p}(f_{j,\varepsilon}(t)) + a_0 \Lambda_j(f_{j,\varepsilon}(t)) \le a_0 S_{j,\varepsilon}(m,p)\ .
\end{equation*} 
Integrating with respect to time and using the non-negativity of $L_{m,p}(f_{j,\varepsilon}(t))$, we obtain
\begin{equation*}
a_0 \int_0^t \Lambda_j(f_{j,\varepsilon}(s))\ \mathrm{d}s \le L_{m,p}(f^{in}) + a_0 t S_{j,\varepsilon}(m,p)
\end{equation*}
for $t> 0$. Dividing the above inequality by $a_0 t$ gives \eqref{b16}.
\end{proof}

Combining the outcome of Lemma~\ref{lemb30} and Lemma~\ref{lemb3b} leads to an $\varepsilon$-dependent $L^p$-estimate for $(f_{j,\varepsilon})_{j\ge 2}$ for a suitable value of $p$.

%%%%%%%%%%%%%%%%
\begin{corollary}\label{corb31}
Let $m_0\in (m_\star,0)$, $m_1\in (\lambda,1)$, and $p_1\in (1,p_0)$ be such that
\begin{equation}
1 < p_1 < \frac{m_1+1}{\lambda+1} \;\text{ and }\; p_1 \le\frac{m_1+\gamma}{\gamma}\ . \label{b140}
\end{equation}
For $\varepsilon\in (0,\varepsilon_{m_0,\sigma})$ and $t\ge 0$,
\begin{align*}
L_{0,p_2}(f_{j,\varepsilon}(t)) & \le \max\left\{ M_{m_0}(f^{in}) , \mu_{m_0} \sigma^2 \varepsilon^{-(\gamma+2-2m_0)/\gamma} \right\} \\
& \qquad + \max\left\{ L_{m_1,p_1}(f^{in}) , \frac{S_{j,\varepsilon}(m_1,p_1)}{\varepsilon^2} \right\}\ ,
\end{align*} 
where
\begin{equation*}
p_2 := \frac{m_1}{m_1-m_0} + p_1 \frac{|m_0|}{m_1-m_0} \in (1,p_1)\ .
\end{equation*}
\end{corollary}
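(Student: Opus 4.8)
The plan is to derive the weighted $L^{p_2}$-estimate by a Hölder interpolation between the weighted $L^{p_1}$-bound of Lemma~\ref{lemb3b} and the negative-order moment bound of Lemma~\ref{lemb30}. First I would record the algebra of the exponents. Set $\theta := |m_0|/(m_1-m_0)$; since $m_0<0<m_1$ we have $m_1-m_0 = m_1 + |m_0| > \max\{m_1,|m_0|\}$, whence $\theta\in(0,1)$, and a direct computation gives $1-\theta = m_1/(m_1-m_0)$, $m_1\theta + m_0(1-\theta) = 0$, and $p_1\theta + (1-\theta) = (m_1 - p_1 m_0)/(m_1-m_0) = p_2$. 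In particular $p_2$ is a convex combination of $p_1$ and $1$, so $p_1>1$ forces $p_2\in(1,p_1)$, as asserted in the statement.

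Next, for $t\ge 0$ and $x>0$ I would use the pointwise identity
\begin{equation*}
f_{j,\varepsilon}(t,x)^{p_2} = \bigl( x^{m_1} f_{j,\varepsilon}(t,x)^{p_1} \bigr)^\theta \bigl( x^{m_0} f_{j,\varepsilon}(t,x) \bigr)^{1-\theta}\ ,
\end{equation*}
the power of $x$ cancelling precisely because $m_1\theta + m_0(1-\theta)=0$. Integrating over $(0,\infty)$ and applying Hölder's inequality with the conjugate exponents $1/\theta$ and $1/(1-\theta)$ yields
\begin{equation*}
L_{0,p_2}(f_{j,\varepsilon}(t)) \le L_{m_1,p_1}(f_{j,\varepsilon}(t))^\theta\, M_{m_0}(f_{j,\varepsilon}(t))^{1-\theta} \le L_{m_1,p_1}(f_{j,\varepsilon}(t)) + M_{m_0}(f_{j,\varepsilon}(t))\ ,
\end{equation*}
the final inequality being Young's inequality (or just $a^\theta b^{1-\theta}\le a+b$ for $a,b\ge 0$).

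It then remains to feed in the two available bounds. The pair $(m_1,p_1)$ satisfies \eqref{b140}, which is exactly the admissibility condition \eqref{b14}, and $p_1<p_0$, so Lemma~\ref{lemb3b} bounds $L_{m_1,p_1}(f_{j,\varepsilon}(t))$ by $\max\{L_{m_1,p_1}(f^{in}), S_{j,\varepsilon}(m_1,p_1)/\varepsilon^2\}$; and since $m_0\in(m_\star,0)$ and $\varepsilon\in(0,\varepsilon_{m_0,\sigma})$, Lemma~\ref{lemb30} bounds $M_{m_0}(f_{j,\varepsilon}(t))$ by $\max\{M_{m_0}(f^{in}), \mu_{m_0}\sigma^2\varepsilon^{-(\gamma+2-2m_0)/\gamma}\}$, which presupposes $f^{in}\in X_{m_0}\cap L^{p_1}((0,\infty),x^{m_1}\mathrm{d}x)$ so that the right-hand side is finite. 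Adding the two inequalities gives the claim. There is no real obstacle here: the only points deserving care are verifying that $\theta\in(0,1)$ and that the $x$-weights cancel exactly, so that the Hölder splitting lands precisely on the two quantities controlled by Lemmas~\ref{lemb3b} and~\ref{lemb30}; the rest is bookkeeping.
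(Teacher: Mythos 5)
Your proposal is correct and follows essentially the same route as the paper: both exploit the fact that the exponent pair $(m_1,p_1)$ and the pair $(m_0,1)$ can be convexly combined, with weights $\theta=|m_0|/(m_1-m_0)$ and $1-\theta=m_1/(m_1-m_0)$ chosen so that the $x$-weight cancels and the exponent of $f$ becomes $p_2$, and both reduce to $L_{0,p_2}\le M_{m_0}+L_{m_1,p_1}$ before invoking Lemmas~\ref{lemb30} and~\ref{lemb3b}. The only cosmetic difference is that you apply H\"older's inequality on the integral and then Young's inequality on the resulting product, whereas the paper applies the pointwise (additive) Young inequality directly to the integrand; the two orders of operations yield the same additive bound.
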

%%%%%%%%%%%%%%%%

\begin{proof}
Since 
\begin{equation*}
\frac{m_1}{m_1-m_0} m_0 + \frac{|m_0|}{m_1-m_0} m_1 = 0\ ,
\end{equation*}
we infer from Young's inequality that, if $h\in X_{m_0}\cap L^{p_1}((0,\infty),x^{m_1}\mathrm{d}x)$, then $h\in L^{p_2}(0,\infty)$ and
\begin{align}
L_{0,p_2}(h) = \|h\|_{p_2}^{p_2} & = \int_0^\infty \left( x^{m_0} |h(x)| \right)^{m_1/(m_1-m_0)} \left( x^{m_1} |h(x)| \right)^{|m_0|/(m_1-m_0)}\ \mathrm{d}x \nonumber \\
& \le \frac{m_1}{m_1-m_0} \int_0^\infty x^{m_0} |h(x)|\ \mathrm{d}x + \frac{|m_0|}{m_1-m_0} \int_0^\infty x^{m_1} |h(x)|^{p_1}\ \mathrm{d}x \nonumber \\
& \le M_{m_0}(|h|) + L_{m_1,p_1}(h)\ . \label{zorglub}
\end{align} 
Now, consider $t\ge 0$. As $\varepsilon\in (0,\varepsilon_{m_0,\sigma})$ and $p_1$ satisfies \eqref{b140}, Corollary~\ref{corb31} readily follows from Lemma~\ref{lemb30} (with $m=m_0$), Lemma~\ref{lemb3b} (with $(m,p)=(m_1,p_1)$), and \eqref{zorglub} (with $h=f_{j,\varepsilon}(t)$).
\end{proof}

%%%%%%%%%%%%%%%%
%%%%%%%%%%%%%%%%
\subsection{Time Equicontinuity}\label{sec2.4}
%%%%%%%%%%%%%%%%
%%%%%%%%%%%%%%%%

The last estimate to be derived in this section provides the time equicontinuity of the sequence $(f_{j,\varepsilon})_{j\ge 2}$ in $L^1(0,\infty)$, which is needed later to apply a variant of the Arzel\`a-Ascoli theorem.

%%%%%%%%%%%%%%%%
\begin{lemma}\label{lemb6}
\refstepcounter{NumConstB}\label{cstB6} There is a positive constant $C_{\ref{cstB6}}>0$ such that
\begin{equation*}
\|\partial_t f_{j,\varepsilon}(t)\|_1 \le C_{\ref{cstB6}} \left( \sigma + \mathcal{M}_{0,j,\varepsilon}^2 \right) \ , \qquad t\ge 0\ .
\end{equation*} 
\end{lemma}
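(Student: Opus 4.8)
The plan is to estimate $\|\partial_t f_{j,\varepsilon}(t)\|_1$ directly from the equation \eqref{b70a}, bounding the $L^1$-norms of the coagulation and fragmentation contributions $\mathcal{C}_{j,\varepsilon}f_{j,\varepsilon}$ and $\mathcal{F}_{j,\varepsilon}f_{j,\varepsilon}$ separately, using only the mass bound \eqref{b4}, the moment bounds of Lemma~\ref{lemb1} and \eqref{b97} (which give $\mathcal{M}_{1+\gamma,j,\varepsilon}\le\sigma$ and $\mathcal{M}_{1,j,\varepsilon}=\varrho\le\sigma$), and $\mathcal{M}_{0,j,\varepsilon}$. The key point is that the truncated kernels are sublinear: from \eqref{b1} and \eqref{b950} one has $K_{j,\varepsilon}(x,y)\le 2\varepsilon K_0 + K_0(x^\lambda+y^\lambda)\le C(1+x^\lambda)(1+y^\lambda)$, and from \eqref{b2}, $a_{j,\varepsilon}(x)\le a_0(x^\gamma+\varepsilon^2)\le C(1+x^\gamma)$.

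First I would handle the coagulation term. Using \eqref{a1b} with $K_{j,\varepsilon}$ and the standard change of variables (Fubini on the gain term), one gets
\begin{equation*}
\|\mathcal{C}_{j,\varepsilon}f_{j,\varepsilon}(t)\|_1 \le \frac{3}{2} \int_0^\infty\int_0^\infty K_{j,\varepsilon}(x,y) f_{j,\varepsilon}(t,x) f_{j,\varepsilon}(t,y)\ \mathrm{d}y\mathrm{d}x\ ,
\end{equation*}
and then the bound $K_{j,\varepsilon}(x,y)\le C(1+x^\lambda)(1+y^\lambda)$ together with $\lambda<1$ reduces the double integral to a product of moments of orders in $\{0,\lambda\}\subset[0,1)$. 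By \eqref{b4} and Hölder's (or Young's) inequality interpolating between $M_0$ and $M_1$, each such moment is bounded by $C(\mathcal{M}_{0,j,\varepsilon}+\varrho)\le C(\mathcal{M}_{0,j,\varepsilon}+\sigma)$, so the coagulation contribution is at most $C(\mathcal{M}_{0,j,\varepsilon}+\sigma)^2\le C(\sigma^2+\mathcal{M}_{0,j,\varepsilon}^2)\le C(\sigma+\mathcal{M}_{0,j,\varepsilon}^2)$ since $\sigma>1$.

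Next the fragmentation term: from \eqref{a1c} with $a_{j,\varepsilon}$ and \eqref{a2} (so the gain term integrates, in $L^1$, to $\int a_{j,\varepsilon}(y) f_{j,\varepsilon}(t,y)\,\mathrm{d}y$ after using $\int_0^y b(x,y)\,\mathrm{d}x$; more precisely one uses $\int_0^y x b(x,y)\mathrm{d}x=y$ and $\int_0^1 B(z)\,\mathrm{d}z=\mathfrak b_0<\infty$ to control $\int_0^y b(x,y)\,\mathrm{d}x = \mathfrak b_0$, which is finite by \eqref{a15c}), one gets $\|\mathcal{F}_{j,\varepsilon}f_{j,\varepsilon}(t)\|_1 \le (1+\mathfrak b_0)\int_0^\infty a_{j,\varepsilon}(y) f_{j,\varepsilon}(t,y)\,\mathrm{d}y \le C\int_0^\infty(1+y^\gamma) f_{j,\varepsilon}(t,y)\,\mathrm{d}y = C(\mathcal{M}_{0,j,\varepsilon}+M_\gamma(f_{j,\varepsilon}(t)))$. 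Interpolating $M_\gamma\le M_0^{1-\gamma}M_1^\gamma\le\mathcal{M}_{0,j,\varepsilon}+\varrho$ if $\gamma<1$, or $M_\gamma\le M_1^{?}$-type bound via $\mathcal{M}_{1+\gamma,j,\varepsilon}\le\sigma$ and $\mathcal{M}_{1,j,\varepsilon}=\varrho$ for general $\gamma$, the fragmentation contribution is $\le C(\sigma+\mathcal{M}_{0,j,\varepsilon})\le C(\sigma+\mathcal{M}_{0,j,\varepsilon}^2)$. Adding the two contributions and using $\partial_t f_{j,\varepsilon}=\mathcal{C}_{j,\varepsilon}f_{j,\varepsilon}+\mathcal{F}_{j,\varepsilon}f_{j,\varepsilon}$ yields the claim with $C_{\ref{cstB6}}$ depending only on the stated parameters. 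The only mild subtlety is making the Fubini manipulations and the splitting of the signed integrand into absolute values rigorous — but since $f_{j,\varepsilon}(t)\in X_0\cap X_{2+\gamma}$ with all relevant moments finite and $K_{j,\varepsilon},a_{j,\varepsilon}$ bounded, everything is justified; there is no real obstacle here, this is a routine a priori estimate once the sublinear bounds on $K_{j,\varepsilon}$ and $a_{j,\varepsilon}$ and the interpolation of $M_\gamma$, $M_\lambda$ between $M_0$ and $M_1$ (resp.\ $M_{1+\gamma}$) are in place.
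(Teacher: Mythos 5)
Your proposal is correct and follows essentially the same route as the paper: integrate the equation in $L^1$, use the $3/2$ and $(1+\mathfrak b_0)$ factors coming from Fubini on the gain terms, bound $K_{j,\varepsilon}$ and $a_{j,\varepsilon}$ above via \eqref{b950} and \eqref{b2}, and reduce to moments that are controlled by \eqref{b4}, \eqref{b97}, and $\mathcal M_{0,j,\varepsilon}$. The only (cosmetic) difference is in how you control the intermediate moments: you interpolate $M_\lambda$ and $M_\gamma$ by H\"older's inequality between $M_0,M_1,M_{1+\gamma}$, whereas the paper uses the pointwise inequalities $x^\lambda\le 1+x$ and $x^\gamma\le 1+x^{1+\gamma}$, which avoids case-splitting on $\gamma\lessgtr 1$; both yield the same conclusion. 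One small slip: for the $L^1$-norm of the fragmentation gain term, the relevant fact is $\int_0^y b(x,y)\,\mathrm dx=\mathfrak b_0=\int_0^1 B(z)\,\mathrm dz<\infty$ (finiteness from $B\in L^{p_0}(0,1)\subset L^1(0,1)$), not the first moment identity in \eqref{a2} that you cite; but you do write the correct quantity, so this is just a misattributed reference rather than a gap.
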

%%%%%%%%%%%%%%%%

\begin{proof}
Let $t>0$. It follows from \eqref{b70a}, \eqref{b950}, and Fubini's theorem that 
\begin{align*}
\|\partial_t f_{j,\varepsilon}(t)\|_1 & \le \frac{3}{2} \int_0^\infty \int_0^\infty K_{j,\varepsilon}(x,y) f_{j,\varepsilon}(t,x) f_{j,\varepsilon}(t,y)\ \mathrm{d}y\mathrm{d}x \\
& \qquad + (1 + \mathfrak{b}_0) \int_0^\infty a_{j,\varepsilon}(x) f_{j,\varepsilon}(t,x)\ \mathrm{d}x \\
& \le \frac{3K_0}{2} \int_0^\infty \int_0^\infty \left( x^\lambda + y^\lambda + 2\varepsilon \right) f_{j,\varepsilon}(t,x) f_{j,\varepsilon}(t,y)\ \mathrm{d}y\mathrm{d}x \\
& \qquad + a_0 (1+\mathfrak{b}_0) \int_0^\infty \left( x^\gamma + \varepsilon^2 \right) f_{j,\varepsilon}(t,x)\ \mathrm{d}x \\
& \le 3K_0 \left[ M_\lambda(f_{j,\varepsilon}(t)) M_0(f_{j,\varepsilon}(t)) + M_0(f_{j,\varepsilon}(t))^2 \right] \\
& \qquad + a_0 (1+\mathfrak{b}_0) \left[ M_\gamma(f_{j,\varepsilon}(t)) + M_0(f_{j,\varepsilon}(t)) \right]\ .
\end{align*}
We then infer from \eqref{b4}, \eqref{b98a}, \eqref{b97}, and the inequalities
\begin{equation*}
x^\lambda \le 1 + x \ , \qquad x^\gamma \le 1 + x^{1+\gamma}\ , \qquad x\ge 0\ ,
\end{equation*} 
that
\begin{align*}
\| \partial_t f_{j,\varepsilon}(t)\|_1 & \le 3K_0 \left[ M_0(f_{j,\varepsilon}(t)) M_1(f_{j,\varepsilon}(t)) + 2 M_0(f_{j,\varepsilon}(t))^2 \right] \\
& \qquad + a_0 (1+\mathfrak{b}_0) \left[ M_{\gamma+1}(f_{j,\varepsilon}(t)) + 2M_0(f_{j,\varepsilon}(t)) \right] \\
& \le 3K_0 \left[ \varrho^2 + 3 \mathcal{M}_{0,j,\varepsilon}^2 \right] + a_0 (1+\mathfrak{b}_0) \left( \sigma + 1 +  \mathcal{M}_{0,j,\varepsilon}^2 \right) \\
& \le [3K_0(\varrho+3) + 2 a_0 (1+\mathfrak{b}_0)] \left( \sigma + \mathcal{M}_{0,j,\varepsilon}^2 \right)\ ,
\end{align*}
and the proof is complete.
\end{proof}

%%%%%%%%%%%%%%%%
%%%%%%%%%%%%%%%%
\section{Stationary solutions by a dynamical approach: $\varepsilon\in (0,1)$}\label{sec3}
%%%%%%%%%%%%%%%%
%%%%%%%%%%%%%%%%

In this section, we fix $\varepsilon\in (0,1)$ and study the coagulation-fragmentation equation \eqref{CFe} with coagulation kernel $K_\varepsilon$ and overall fragmentation rate $a_\varepsilon$ given by 
\begin{equation}
K_\varepsilon(x,y) = K(x,y) + 2\varepsilon K_0\ , \qquad a_\varepsilon(x) = a(x) + a_0\varepsilon^2\ , \qquad (x,y)\in (0,\infty)^2\ ; \label{c2}
\end{equation}
that is, 
\begin{subequations}\label{c1}
\begin{align}	
\partial_t f & = \mathcal{C}_\varepsilon f + \mathcal{F}_\varepsilon f\ , \qquad (t,x) \in (0,\infty)^2\ , \label{c1a} \\
f(0) & = f^{in}\ , \qquad x\in (0,\infty)\ , \label{c1b}
\end{align}
\end{subequations}
where the coagulation and fragmentation operators $\mathcal{C}_\varepsilon$ and $\mathcal{F}_\varepsilon$ are defined in \eqref{CFe}. 

Several results are established in this section. We begin with the well-posedness of \eqref{c1} for a suitable class of initial conditions, the existence of solutions being obtained by passing to the limit as $j\to\infty$ in \eqref{b70} (Section~\ref{sec3.1}). We also establish the continuity of the solutions to \eqref{c1} with respect to the initial condition for the weak topology of $X_1$ (Section~\ref{sec3.3}) and construct an invariant set for the dynamics of \eqref{c1} (Section~\ref{sec3.4}). Combining the outcome of this analysis with a consequence of Tychonov's fixed point theorem provides the existence of a stationary solution to \eqref{c1a} (Section~\ref{sec3.5}). The estimates derived in the previous section are of course at the heart of the proofs of the results of this section.

\medskip

We fix 
\begin{subequations}\label{c3}
\begin{equation}
m_0\in (m_\star,0)\cap (-1,0)\ , \qquad m_1\in (\lambda,1)\ , \qquad p_1\in (1,p_0)\ , \label{c3a}
\end{equation}
such that
\begin{equation}
1 < p_1 < \frac{m_1+1}{\lambda+1} \;\text{ and }\; p_1 \le \frac{m_1+\gamma}{\gamma}\ . \label{c3c}
\end{equation}
\end{subequations}
We recall that \eqref{c3} implies that
\begin{subequations}\label{c4}
\begin{equation}
m_2\in (\lambda,1) \;\text{ and }\; m_2 < \frac{m_1+1+\gamma-p_1}{p_1} < \frac{m_1+1+\gamma p_1-p_1}{p_1} \le 1+\gamma \ , \label{c4a}
\end{equation}
where
\begin{equation}
m_2 := \frac{m_1+1-p_1}{p_1} < 1\ . \label{c4b}
\end{equation}
\end{subequations}
We also fix $\varrho>0$ and $\sigma>0$ satisfying 
\begin{equation}
\sigma >\max\left\{ 1 , \varrho, \mu_2 , \mu_{2+\gamma} \right\}\ , \label{c5}
\end{equation} 
recalling that $\mu_m$ is defined in Lemma~\ref{lemb1} for $m\ge 2$. 

We next define a subset $\mathcal{Y}_\varepsilon$ of $X_1^+$ as follows: $h\in \mathcal{Y}_\varepsilon$ if and only if 
\begin{subequations}\label{c300}
\begin{align}
h \in X_1^+ \cap X_{m_0} \cap X_{2+\gamma}\ , & \qquad M_1(h) = \varrho\ , \label{c300a}\\
\max\{M_2(h) , M_{2+\gamma}(h)\} \le \sigma\ , & \qquad M_{m_2}(h) \le \mu_{m_2} + \sigma\ , \label{c300b}\\
M_0(h) \le \sigma + \mu_0 \varepsilon^{-1}\ , & \qquad M_{m_0}(h) \le \mu_{m_0} \sigma^2 \varepsilon^{-(\gamma+2-2m_0)/\gamma}\ , \label{c300c}\\
L_{m_1,p_1}(h) \le \sigma_1 \varepsilon^{-2}\ , \label{c300d}
\end{align}
\end{subequations}
where
\begin{equation}
\sigma_1 := 2^{p_1} \mathcal{B}_{p_1}^{p_1} \left[ 2 \sigma^{p_1} + 3 (\mu_{m_2}+\sigma)^{p_1} \right] \label{c301}
\end{equation}
and $\mathcal{B}_{p_1}$ is defined in \eqref{b95a}.

%%%%%%%%%%%%%%%%
%%%%%%%%%%%%%%%%
\subsection{Well-posedness of \eqref{c1}}\label{sec3.1}
%%%%%%%%%%%%%%%%
%%%%%%%%%%%%%%%%

We begin with the well-posedness of \eqref{c1} in $\mathcal{Y}_\varepsilon$, along with several estimates for its solutions. 

%%%%%%%%%%%%%%%%
\begin{proposition}\label{propc1}
Consider $\varepsilon\in (0,\varepsilon_{m_0,\sigma})$ and $f^{in}\in \mathcal{Y}_\varepsilon$, recalling that
\begin{equation*}
\varepsilon_{m_0,\sigma} = \frac{1}{\sigma} \min\left\{ 1 , \frac{K_0 \varrho^2}{4 a_0 \mathfrak{b}_{m_0}} \right\}
\end{equation*}
is defined in \eqref{champignac} with $\mathfrak{b}_{m_0}$ given by \eqref{b95a}. There is a unique weak solution 
\begin{equation*}
\Psi_\varepsilon(\cdot,f^{in}) = f_\varepsilon \in C([0,\infty),X_0^+)\cap C([0,\infty),X_{1,w})
\end{equation*}
to \eqref{c1} which satisfies 
\begin{equation}
\begin{split}
\frac{d}{dt} \int_0^\infty \vartheta(x) f_\varepsilon(t,x)\ \mathrm{d}x & = \frac{1}{2} \int_0^\infty \int_0^\infty K_\varepsilon(x,y) \chi_\vartheta(x,y) f_\varepsilon(t,x) f_\varepsilon(t,y)\ \mathrm{d}y \mathrm{d}x \\
& \qquad - \int_0^\infty a_\varepsilon(y) N_\vartheta(y) f_\varepsilon(t,y)\ \mathrm{d}y \ ,
\end{split} \label{c302}
\end{equation}
for all $t\ge 0$ and $\vartheta\in L^\infty(0,\infty)$, the functions $\chi_\vartheta$ and $N_\vartheta$ being defined in \eqref{b6}, and possesses the following properties:
\begin{subequations}\label{c6}
\begin{align}
M_1(f_\varepsilon(t)) & = \varrho\ , \qquad t\ge 0\ , \label{c6a} \\
\sup_{t\ge 0} M_m(f_\varepsilon(t)) & \le \sigma\ , \qquad m\in (1,2+\gamma]\ , \label{c6b} \\
\sup_{t\ge 0} M_m(f_\varepsilon(t)) & \le \max\left\{ M_m(f^{in}) , \sigma + \mu_m \right\}\ , \qquad m\in (\lambda,1)\ , \label{c6c}
\end{align}
\end{subequations}
\begin{subequations}\label{c7}
\begin{align}
\sup_{t\ge 0} M_0(f_\varepsilon(t)) & \le \sigma + \mu_0 \varepsilon^{-1}\ , \label{c7a} \\
\sup_{t\ge 0} M_{m_0}(f_\varepsilon(t)) & \le \mu_{m_0} \sigma^2 \varepsilon^{-(\gamma+2-2m_0)/\gamma}\ , \label{c7b} 
\end{align}
\end{subequations}
\begin{subequations}\label{c8}
\begin{align}
\sup_{t\ge 0} L_{m_1,p_1}(f_\varepsilon(t)) & \le \sigma_1 \varepsilon^{-2}\ , \label{c8a} \\
\sup_{t\ge 0} L_{0,p_2}(f_\varepsilon(t)) & \le \kappa_\varepsilon := \mu_{m_0} \sigma^2 \varepsilon^{-(\gamma+2-2m_0)/\gamma} + \sigma_1 \varepsilon^{-2} \ , \label{c8b} 
\end{align}
\end{subequations}
and
\begin{equation}
\frac{1}{t} \int_0^t L_{m_1+\gamma,p_1}(f_\varepsilon(s))\ \mathrm{d}s \le \frac{1}{a_0 t} L_{m_1,p_1}(f^{in}) + \sigma_1\ , \qquad t>0\ . \label{c9}
\end{equation}
Moreover, if $f^{in}\in X_m$ for some $m>2+\gamma$, then $f_\varepsilon\in L^\infty((0,\infty),X_m)$ and
\begin{equation}
\sup_{t\ge 0} M_m(f_\varepsilon(t)) \le \max\{ M_m(f^{in}) , \mu_m \}\ , \label{c10}
\end{equation}
the constant $\mu_m$ being defined in Lemma~\ref{lemb1}.
\end{proposition}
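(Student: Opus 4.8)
The plan is to obtain $f_\varepsilon$ as a limit of the truncated solutions $f_{j,\varepsilon}$ constructed in Section~\ref{sec2} as $j\to\infty$, and then to transfer all the uniform estimates of Section~\ref{sec2} to the limit. First I would record that, since $f^{in}\in\mathcal{Y}_\varepsilon\subset X_0^+\cap X_{2+\gamma}$ with $M_1(f^{in})=\varrho$, the hypothesis \eqref{b0} holds, so the truncated problems \eqref{b70} admit unique non-negative strong solutions $f_{j,\varepsilon}\in C^1([0,\infty);X_0)$, which are mass-conserving by \eqref{b4}. The value of $\sigma$ fixed in \eqref{c5} together with the membership $f^{in}\in\mathcal{Y}_\varepsilon$ (in particular \eqref{c300b}) guarantees that \eqref{b98} holds for this $f^{in}$, so Lemma~\ref{lemb1}, Corollary~\ref{corb1b}, Lemma~\ref{lemb2}, Lemma~\ref{lemb3}, Lemma~\ref{lemb30} (since $\varepsilon<\varepsilon_{m_0,\sigma}$ and $m_0\in(-1,0)$), Lemma~\ref{lemb3b} (with $(m,p)=(m_1,p_1)$, whose admissibility is exactly \eqref{c3c}), Corollary~\ref{corb31}, and Lemma~\ref{lemb6} all apply and yield bounds on $\{f_{j,\varepsilon}\}_{j\ge2}$ that are uniform in $j$. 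Evaluating the right-hand sides of these estimates on $f^{in}\in\mathcal{Y}_\varepsilon$ and using the definitions \eqref{c301} of $\sigma_1$ and $\kappa_\varepsilon$ produces precisely the bounds \eqref{c6}--\eqref{c9}, uniformly in $j$; in particular $S_{j,\varepsilon}(m_1,p_1)\le\sigma_1$ follows from Corollary~\ref{corb1b} and Lemma~\ref{lemb2} together with \eqref{c4}.

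Next I would pass to the limit $j\to\infty$. The uniform bound on $M_0(f_{j,\varepsilon})$ from Lemma~\ref{lemb3} combined with the uniform bound on $M_{m_0}(f_{j,\varepsilon})$ (controlling small sizes, $m_0<0$) and on $M_{2+\gamma}(f_{j,\varepsilon})$ (controlling large sizes) gives, via the Dunford--Pettis theorem, that $\{f_{j,\varepsilon}(t)\}_{j\ge2}$ is relatively sequentially weakly compact in $X_0$, locally uniformly in $t$; the time equicontinuity furnished by Lemma~\ref{lemb6} then allows a variant of the Arzel\`a--Ascoli theorem (in the spirit of \cite{EMRR05, BLLxx}) to extract a subsequence converging in $C([0,T],X_{0,w})$ for every $T>0$ to some limit $f_\varepsilon$. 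One checks that each term in the weak formulation \eqref{b5} with $K_{j,\varepsilon},a_{j,\varepsilon}$ replaced by $K_\varepsilon,a_\varepsilon$ passes to the limit: the truncations $K_{j,\varepsilon}\to K_\varepsilon$ and $a_{j,\varepsilon}\to a_\varepsilon$ pointwise and monotonically, the moment bounds provide the uniform integrability needed to justify passing to the limit in the (bi)linear coagulation and fragmentation integrals, and the bound \eqref{b14}-type control on $B$ handles the fragmentation gain term. This yields \eqref{c302} for all bounded $\vartheta$, hence $f_\varepsilon$ is a weak solution of \eqref{c1}; lower semicontinuity of the moments and of $L_{m,p}$ under weak convergence transfers the bounds \eqref{c6}--\eqref{c9} (and \eqref{c10} when $f^{in}\in X_m$, $m>2+\gamma$, via Lemma~\ref{lemb1}) to $f_\varepsilon$. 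Continuity $f_\varepsilon\in C([0,\infty),X_0^+)$ (not merely $C([0,\infty),X_{0,w})$) follows from the weak-$L^1$ continuity together with the conservation $M_0$-type control, or more directly by a standard argument using the integral form of the equation and the moment bounds; combined with $M_1(f_\varepsilon(t))=\varrho$ for all $t$ (which passes to the limit because of the uniform higher-moment bound preventing loss of mass at infinity) one gets $f_\varepsilon\in C([0,\infty),X_{1,w})$.

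Finally, uniqueness: I would argue as in the cited well-posedness references \cite{BLLxx, EMRR05, Stew89, Walk02}. Given two weak solutions $f_\varepsilon,g_\varepsilon$ with the regularity and moment bounds above and the same initial datum, set $w:=f_\varepsilon-g_\varepsilon$ and estimate $\frac{d}{dt}\int_0^\infty(1+x)\,\mathrm{sign}(w(t,x))\cdot$(something)\,$\mathrm{d}x$, or more simply $\int_0^R(1+x)|w(t,x)|\,\mathrm{d}x$, exploiting that $K_\varepsilon$ has the bound \eqref{b950} with an extra $2\varepsilon K_0$, that $a_\varepsilon$ has linear-type growth controlled by the $X_{1+\gamma}$-bound \eqref{c6b}, and that the fragmentation gain term is controlled through $B\in L^1((0,1),z\mathrm{d}z)$; a Gronwall argument in the weighted $L^1$-norm with weight $1+x$ then forces $w\equiv0$. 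The main obstacle is the passage to the limit $j\to\infty$ in the nonlinear coagulation term near $x=0$: because $\varphi$ (and hence the approximants) may have a non-integrable singularity there, the weak-$X_0$ convergence must be combined carefully with the negative-order moment bound $M_{m_0}$ of Lemma~\ref{lemb30} and the weighted $L^{p_1}$-estimate of Lemma~\ref{lemb3b} to rule out any loss of compactness or spurious concentration at the origin; the rest is routine in the framework of \cite{EMRR05}.
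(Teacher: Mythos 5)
Your proposal follows the same strategy as the paper's proof: pass to the limit $j\to\infty$ in the truncated problems \eqref{b70}, using the uniform moment bounds of Section~\ref{sec2}, the Dunford--Pettis theorem, and the variant of the Arzel\`a--Ascoli theorem to get convergence in $C([0,T],X_{0,w})$ of a subsequence, then upgrade to $C([0,T],X_{1,w}\cap X_{\gamma,w})$ via the moment bound of order $2$, transfer the estimates by weak lower semicontinuity, and appeal to a weighted-$L^1$ contraction argument for uniqueness. The observation that $S_{j,\varepsilon}(m_1,p_1)\le\sigma_1$ follows from Corollary~\ref{corb1b}, Lemma~\ref{lemb2}, and \eqref{c4} is exactly the paper's computation~\eqref{c17}.

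One point deserves a caveat. For uniqueness you write ``a Gronwall argument in the weighted $L^1$-norm with weight $1+x$.'' This weight is only adequate when $\gamma\le 1$. The fragmentation gain term tested against $(1+x)\,\mathrm{sign}(w(x))$ produces a contribution of order $\int a_\varepsilon(y)\,|w(y)|\,(\mathfrak b_0+y)\,\mathrm dy$, and since $\mathfrak b_1=1$ by \eqref{a15c} the cancellation with the loss term leaves a residual $\sim (\mathfrak b_0-1)\int y^\gamma |w(y)|\,\mathrm dy$, which is not controlled by $\int(1+y)|w(y)|\,\mathrm dy$ when $\gamma>1$. The paper uses the weight $\ell(x)=1+x^{\max\{1,\gamma\}}$ (invoking \cite[Theorem~8.2.55]{BLLxx}), and for $\gamma>1$ the decisive feature is that $\mathfrak b_\gamma<1$ by \eqref{b95b}, which makes the fragmentation contribution dissipative at large sizes. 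You cite the right references so the gap is easily repaired, but with the weight $1+x$ as written the Gronwall inequality does not close for $\gamma>1$.

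Everything else --- the reduction to \eqref{b0}, the use of \eqref{c300b}--\eqref{c300d} to evaluate the bounds on $f^{in}$, the compactness coming from the combination of the $M_{m_0}$, $M_0$, $M_{2+\gamma}$, and $L^{p_2}$ bounds, and the time equicontinuity from Lemma~\ref{lemb6} --- mirrors the paper's Step~1, and your treatment of \eqref{c10} via Lemma~\ref{lemb1} and weak lower semicontinuity mirrors Step~3.
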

%%%%%%%%%%%%%%%%

\begin{proof} \textbf{Step~1: Existence.} Let $j\ge 2$ and recall that $f_{j,\varepsilon}$ is the strong solution to the coagulation-fragmentation equation \eqref{b70}, see Section~\ref{sec2}. Since $f^{in}\in \mathcal{Y}_\varepsilon$, it follows from \eqref{b4} that
\begin{equation}
M_1(f_{j,\varepsilon}(t)) = \varrho\ , \qquad t\ge 0\ ,	\ j\ge 2\ ,\label{c11}
\end{equation}
and from \eqref{b98}, \eqref{c5}, Lemma~\ref{lemb1}, and Corollary~\ref{corb1b} that
\begin{equation}
\sup_{t\ge 0} M_m(f_{j,\varepsilon}(t)) \le \sigma\ , \qquad m\in (1,2+\gamma]\ , \ j\ge 2\ . \label{c12}
\end{equation}
Next, \eqref{b98}, \eqref{c5}, \eqref{c300b}, and Lemma~\ref{lemb2} guarantee that
\begin{equation}
\sup_{t\ge 0} M_{m_2}(f_{j,\varepsilon}(t)) \le \mu_{m_2}+\sigma\ , \qquad j\ge 2\ , \label{c14}
\end{equation}
while, since $\varepsilon\in (0,\varepsilon_{m_0,\sigma})$, we deduce from \eqref{b98}, \eqref{c5}, \eqref{c300c}, Lemma~\ref{lemb3}, and Lemma~\ref{lemb30} that 
\begin{align}
\sup_{t\ge 0} M_0(f_{j,\varepsilon}(t)) & \le \sigma + \mu_0 \varepsilon^{-1}\ , \qquad j\ge 2\ , \label{c15} \\
\sup_{t\ge 0} M_{m_0}(f_{j,\varepsilon}(t)) & \le \mu_{m_0} \sigma^2 \varepsilon^{-(\gamma+2-2m_0)/\gamma}\ , \qquad j\ge 2\ . \label{c16}
\end{align}
Finally, by \eqref{c4}, \eqref{c12}, and H\"older's and Young's inequalities,
\begin{align*}
M_{(m+1+\gamma-p_1)/p_1}(f_{j,\varepsilon}(t))^{p_1} & \le \frac{\gamma}{p_1(1+\gamma-m_2)} M_{1+\gamma}(f_{j,\varepsilon}(t))^{p_1} \\
& \qquad + \frac{p_1(1+\gamma-m_2)-\gamma}{p_1(1+\gamma-m_2)} M_{m_2}(f_{j,\varepsilon}(t))^{p_1} \\
& \le \sigma^{p_1} + M_{m_2}(f_{j,\varepsilon}(t))^{p_1}\ ,
\end{align*}
and
\begin{align*}
M_{(m+1+\gamma p_1-p_1)/p_1}(f_{j,\varepsilon}(t))^{p_1} & \le \frac{\gamma}{1+\gamma-m_2} M_{1+\gamma}(f_{j,\varepsilon}(t))^{p_1} + \frac{1-m_2}{1+\gamma-m_2} M_{m_2}(f_{j,\varepsilon}(t))^{p_1} \\
& \le \sigma^{p_1} + M_{m_2}(f_{j,\varepsilon}(t))^{p_1}
\end{align*}
for $t\ge 0$ and $j\ge 2$, so that, using also \eqref{c301} and \eqref{c14}, 
\begin{align}
S_{j,\varepsilon}(m_1,p_1) & = 2^{p_1} \mathcal{B}_{p_1}^{p_1} \sup_{t\ge 0} M_{(m+1+\gamma-p_1)/p_1}(f_{j,\varepsilon}(t))^{p_1} \nonumber\\
& \qquad + 2^{p_1} \mathcal{B}_{p_1}^{p_1} \sup_{t\ge 0} M_{(m+1+\gamma p_1-p_1)/p_1}(f_{j,\varepsilon}(t))^{p_1} \nonumber\\
& \qquad + 2^{p_1} \mathcal{B}_{p_1}^{p_1} \varepsilon^2 \sup_{t\ge 0}M_{m_2}(f_{j,\varepsilon}(t))^{p_1} \nonumber\\
& \le 2^{p_1} \mathcal{B}_{p_1}^{p_1} \left[ 2 \sigma^{p_1} + 3 \left( \mu_{m_2} + \sigma \right)^{p_1}\right] = \sigma_1 \ . \label{c17}
\end{align} 
Combining \eqref{b15}, \eqref{c300d}, and \eqref{c17}, we conclude that
\begin{equation}
\sup_{t\ge 0} L_{m_1,p_1}(f_{j,\varepsilon}(t)) \le \sigma_1 \varepsilon^{-2}\ , \qquad j\ge 2\ . \label{c17.5}
\end{equation}
A straightforward consequence of \eqref{c3c}, \eqref{c5}, \eqref{c6}, \eqref{c16}, \eqref{c17.5}, and Corollary~\ref{corb31} is the bound
\begin{equation}
\sup_{t\ge 0} L_{0,p_2}(f_{j,\varepsilon}(t)) \le \mu_{m_0} \sigma^2 \varepsilon^{-(\gamma+2-2m_0)/\gamma} + \sigma_1 \varepsilon^{-2} = \kappa_\varepsilon\ , \qquad j\ge 2\ . \label{c18}
\end{equation} 

Now, introducing the set 
\begin{equation}
\mathcal{W}_\varepsilon := \left\{ 
\begin{array}{c}
h \in X_{m_0}\cap X_{2+\gamma}\cap L^{p_2}(0,\infty)\ : \\
 \\
M_{2+\gamma}(h)\le \sigma\ , \ \max\{M_{m_0}(h) , L_{0,p_2}(h)\} \le \kappa_\varepsilon
\end{array}
\right\}\ , \label{c60}
\end{equation}
it readily follows from \eqref{c12}, \eqref{c16}, and \eqref{c18} that 
\begin{equation}
f_{j,\varepsilon}(t)\in \mathcal{W}_\varepsilon\ , \qquad t\ge 0\ , \ j\ge 2\ , \label{c19}
\end{equation}
while the Dunford-Pettis theorem ensures that 
\begin{equation}
\mathcal{W}_\varepsilon \,\text{ is a relatively sequentially weakly compact subset of }\, X_m \label{c61}
\end{equation}
for any $m\in (m_0,2+\gamma)$, and in particular of $X_0$. Moreover, it follows from \eqref{c15} and Lemma~\ref{lemb6} that, for $0 \le t_1 \le t_2$ and $j\ge 2$, 
\begin{equation}
\|f_{j,\varepsilon}(t_2) - f_{j,\varepsilon}(t_1)\|_1 \le \int_{t_1}^{t_2} \|\partial_t f_{j,\varepsilon}(t)\|_1\ \mathrm{d}t \le C_{\ref{cstB6}} \left[ \sigma + \left( \sigma + \mu_0 \varepsilon^{-1} \right)^2 \right] (t_2-t_1) \ . \label{c20}
\end{equation}
Consequently, $(f_{j,\varepsilon})_{j\ge 2}$ is equicontinuous at each $t\ge 0$ for the norm-topology of $L^1(0,\infty)$, and thus it is also equicontinuous for the weak topology of $L^1(0,\infty)$. This property, along with \eqref{c19} and the relative compactness \eqref{c61} of $\mathcal{W}_\varepsilon$, allows us to apply a variant of the Arzel\`a-Ascoli theorem \cite[Theorem~A.3.1]{Vrab03} to conclude that there are a subsequence of $(f_{j,\varepsilon})_{j\ge 2}$ (possibly depending on $\varepsilon$ but not relabeled) and $f_\varepsilon\in C([0,\infty),X_{0,w})$ such that
\begin{equation}
f_{j,\varepsilon} \longrightarrow f_\varepsilon \;\text{ in }\; C([0,T],X_{0,w}) \;\text{ for all }\; T>0\ . \label{c21}
\end{equation} 
A first consequence of \eqref{c21} is that $f_\varepsilon(t)\in X_0^+$ for all $t\ge 0$. It next follows from \eqref{c11}, \eqref{c12}, \eqref{c15}, \eqref{c16}, and \eqref{c21} by a weak lower semicontinuity argument that $f_\varepsilon$ satisfies \eqref{c6b}, \eqref{c7a}, \eqref{c7b}, and 
\begin{equation*}
M_1(f_\varepsilon(t)) \le \varrho\ , \qquad t\ge 0\ .
\end{equation*} 
A similar argument allows us to deduce \eqref{c6c} from Lemma~\ref{lemb2} and \eqref{c21}. We then combine the just established property \eqref{c6b} with \eqref{c12} and \eqref{c21} to improve the convergence \eqref{c21} to 
\begin{equation}
f_{j,\varepsilon} \longrightarrow f_\varepsilon \;\text{ in }\; C([0,T],X_{1,w}\cap X_{\gamma,w}) \;\text{ for all }\; T>0\ . \label{c22}
\end{equation}  
Recalling \eqref{c11}, we readily infer from \eqref{c22} that $f_\varepsilon$ satisfies the mass conservation \eqref{c6a}. We employ again weak lower semicontinuity arguments to deduce \eqref{c8} and
\begin{equation}
\frac{1}{t} \int_0^t \int_0^R x^{m+\gamma} (f_\varepsilon(s,x))^p\ \mathrm{d}x\mathrm{d}s \le \frac{1}{a_0 t} L_{m_1,p_1}(f^{in}) + \sigma_1\ , \qquad t>0\ , \  R\ge 1\ , \label{c200}
\end{equation}
from \eqref{b15}, \eqref{b16}, \eqref{c3c}, \eqref{c300d}, \eqref{c17}, \eqref{c17.5}, \eqref{c18}, and \eqref{c21}. As the right-hand side of \eqref{c200} does not depend on $R$, we may let $R\to\infty$ in \eqref{c200} and use Fatou's lemma to obtain \eqref{c9}. 

Now, owing to \eqref{a14}, \eqref{a15}, \eqref{c21}, and \eqref{c22}, we may proceed as in \cite{Stew89}, see also \cite{BLLxx, ELMP03, EMRR05, LaMi02b}, to deduce from \eqref{b5} that $f_\varepsilon$ is a weak solution to \eqref{c1}, in the sense that it satisfies \eqref{c302}. Furthermore, we may argue as in the proof of Lemma~\ref{lemb6} with the help of \eqref{c6a}, \eqref{c6b}, and \eqref{c7a} to show that $\partial_t f_\varepsilon(t)$ belongs to $X_0$ for any $t\ge 0$ and satisfies
\begin{equation}
\|\partial_t f_\varepsilon(t)\|_1 \le C_{\ref{cstB6}} \left[ \sigma + \left( \sigma + \mu_0 \varepsilon^{-1} \right)^2 \right]\ , \qquad t\ge 0\ , \label{c62}
\end{equation} 
the constant $C_{\ref{cstB6}}$ being defined in Lemma~\ref{lemb6}.
\smallskip

\noindent\textbf{Step~2: Uniqueness.} It is a consequence of \cite[Theorem~8.2.55]{BLLxx} (with $\ell(x) = 1 + x^{\max\{1,\gamma\}}$, $x>0$, and $\zeta=1$), see also \cite{EMRR05}.

\smallskip

\noindent\textbf{Step~3: Higher moments.} Finally, if $f^{in}\in X_m$ for some $m>2+\gamma$, then the proof of \eqref{c10} relies on a weak lower semicontinuity argument as that of \eqref{c6b} and follows from \eqref{c21} and Lemma~\ref{lemb1}.
\end{proof}

%%%%%%%%%%%%%%%%
%%%%%%%%%%%%%%%%
\subsection{Invariant Set}\label{sec3.4}
%%%%%%%%%%%%%%%%
%%%%%%%%%%%%%%%%

As a consequence of the various estimates derived in Proposition~\ref{propc1}, we construct a subset $\mathcal{Z}_\varepsilon$ of $\mathcal{Y}_\varepsilon$ which is left invariant by $\Psi_\varepsilon$. Specifically, $h\in \mathcal{Z}_\varepsilon$ if and only if 
\begin{subequations}\label{c35}
\begin{align}
& h \in \mathcal{Y}_\varepsilon \cap \bigcap_{m>2+\gamma} X_m\ , \label{c35a}\\
& \qquad M_m(h) \le \mu_m\ , \qquad m>2+\gamma\ , \label{c35b}\\
& \qquad M_m(h) \le \sigma\ , \qquad m\in (1,2+\gamma]\ , \label{c35c}\\
& \qquad M_m(h) \le \sigma + \mu_m\ , \qquad m\in (\lambda,1)\ . \label{c35d}
	\end{align}
\end{subequations}

%%%%%%%%%%%%%%%%
\begin{proposition}\label{propc3}
Consider $\varepsilon\in (0,\varepsilon_{m_0,\sigma})$ and $f^{in}\in \mathcal{Z}_\varepsilon$. Then $\Psi_\varepsilon(t,f^{in})\in \mathcal{Z}_\varepsilon$ for all $t\ge 0$.
\end{proposition}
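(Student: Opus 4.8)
The plan is to show that each of the estimates defining $\mathcal{Z}_\varepsilon$ is propagated by the flow $\Psi_\varepsilon$, i.e. that if $f^{in}\in\mathcal{Z}_\varepsilon$ then $f_\varepsilon(t)=\Psi_\varepsilon(t,f^{in})$ satisfies \eqref{c35} for every $t\ge 0$. Since $\mathcal{Z}_\varepsilon\subset\mathcal{Y}_\varepsilon$ and $f^{in}\in\mathcal{Z}_\varepsilon$, Proposition~\ref{propc1} applies directly and already delivers most of what we need: the mass conservation \eqref{c6a} gives $M_1(f_\varepsilon(t))=\varrho$; the bound \eqref{c6b} gives $M_m(f_\varepsilon(t))\le\sigma$ for $m\in(1,2+\gamma]$, which is precisely \eqref{c35c} and, in particular, $\max\{M_2,M_{2+\gamma}\}\le\sigma$; the bounds \eqref{c7a}, \eqref{c7b} give $M_0(f_\varepsilon(t))\le\sigma+\mu_0\varepsilon^{-1}$ and $M_{m_0}(f_\varepsilon(t))\le\mu_{m_0}\sigma^2\varepsilon^{-(\gamma+2-2m_0)/\gamma}$; and \eqref{c8a} gives $L_{m_1,p_1}(f_\varepsilon(t))\le\sigma_1\varepsilon^{-2}$. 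These cover \eqref{c300a}--\eqref{c300d} except for the bound on $M_{m_2}$, which requires noting that $m_2\in(\lambda,1)$ by \eqref{c4} and that $f^{in}\in\mathcal{Z}_\varepsilon\subset\mathcal{Y}_\varepsilon$ forces $M_{m_2}(f^{in})\le\mu_{m_2}+\sigma$, so \eqref{c6c} applied with $m=m_2$ yields $M_{m_2}(f_\varepsilon(t))\le\max\{M_{m_2}(f^{in}),\sigma+\mu_{m_2}\}\le\mu_{m_2}+\sigma$ as desired. Hence $f_\varepsilon(t)\in\mathcal{Y}_\varepsilon$ for all $t\ge 0$.

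The remaining conditions are \eqref{c35b} and \eqref{c35d}. For \eqref{c35d}, fix $m\in(\lambda,1)$; then \eqref{c6c} gives $M_m(f_\varepsilon(t))\le\max\{M_m(f^{in}),\sigma+\mu_m\}$, and since $f^{in}\in\mathcal{Z}_\varepsilon$ satisfies \eqref{c35d}, i.e. $M_m(f^{in})\le\sigma+\mu_m$, we conclude $M_m(f_\varepsilon(t))\le\sigma+\mu_m$. For \eqref{c35b}, we must first check that the higher moments stay finite: since $f^{in}\in\mathcal{Z}_\varepsilon$ we have $f^{in}\in X_m$ for every $m>2+\gamma$, so the last assertion of Proposition~\ref{propc1}, namely \eqref{c10}, applies and gives both $f_\varepsilon\in L^\infty((0,\infty),X_m)$ and $M_m(f_\varepsilon(t))\le\max\{M_m(f^{in}),\mu_m\}$ for each such $m$; combining this with the hypothesis $M_m(f^{in})\le\mu_m$ from \eqref{c35b} yields $M_m(f_\varepsilon(t))\le\mu_m$. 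Together with $f_\varepsilon(t)\in\mathcal{Y}_\varepsilon$ this shows $f_\varepsilon(t)\in\mathcal{Z}_\varepsilon\cap\bigcap_{m>2+\gamma}X_m$, completing the verification of \eqref{c35a} as well.

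Since essentially all the work has already been done in Proposition~\ref{propc1}, this proof is a matter of carefully matching each clause of the definition \eqref{c35} of $\mathcal{Z}_\varepsilon$ with the corresponding estimate among \eqref{c6a}--\eqref{c10} and using that $f^{in}\in\mathcal{Z}_\varepsilon$ supplies the initial-data bounds needed to close the $\max$'s appearing in \eqref{c6c} and \eqref{c10}. The only point requiring a small auxiliary observation is the $M_{m_2}$ bound, where one has to invoke $m_2\in(\lambda,1)$ from \eqref{c4} together with the membership $f^{in}\in\mathcal{Y}_\varepsilon$ to apply \eqref{c6c}; there is no genuine obstacle here. I would therefore write the proof as a short paragraph enumerating, clause by clause, which estimate from Proposition~\ref{propc1} yields each of \eqref{c300a}--\eqref{c300d}, \eqref{c35b}, \eqref{c35c}, and \eqref{c35d}, taking care to note at the outset that $f^{in}\in\mathcal{Z}_\varepsilon\subset\mathcal{Y}_\varepsilon$ so that Proposition~\ref{propc1} is applicable and the higher-moment hypothesis needed for \eqref{c10} is satisfied.
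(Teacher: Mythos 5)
Your proof is correct and follows essentially the same route as the paper: matching each clause of \eqref{c35} to the corresponding estimate from Proposition~\ref{propc1} (\eqref{c6a}--\eqref{c10}), and using $f^{in}\in\mathcal{Z}_\varepsilon$ to close the $\max$'s in \eqref{c6c} and \eqref{c10}. You spell out the bookkeeping a bit more explicitly than the paper does, but the argument is the same.
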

%%%%%%%%%%%%%%%%

\begin{proof}
Set $f_\varepsilon := \Psi_\varepsilon(\cdot,f^{in})$ and consider $t>0$. We first deduce from \eqref{c6a}, \eqref{c6b} (with $m=2$ and $m=2+\gamma$), \eqref{c6c} (with $m=m_2$), \eqref{c7}, and \eqref{c8a} that $f_\varepsilon(t)\in \mathcal{Y}_\varepsilon$. In addition, $f_\varepsilon(t)\in X_m$ for all $m>2+\gamma$ and satisfies \eqref{c35b} by \eqref{c10}, while \eqref{c35c} and \eqref{c35d} follow from \eqref{c6b} and \eqref{c6c}, respectively.
\end{proof}

%%%%%%%%%%%%%%%%
%%%%%%%%%%%%%%%%
\subsection{Dynamical System in $X_{1,w}$}\label{sec3.3}
%%%%%%%%%%%%%%%%
%%%%%%%%%%%%%%%%

We go on with the continuity properties of the map $f^{in}\mapsto \Psi_\varepsilon(.,f^{in})$ defined in Proposition~\ref{propc1} and actually show that $\Psi_\varepsilon$ is a dynamical system on $\mathcal{Y}_\varepsilon$ for the weak topology of $X_1$. 

%%%%%%%%%%%%%%%%
\begin{proposition}\label{propc2}
Consider $\varepsilon\in (0,\varepsilon_{m_0,\sigma})$, $f^{in}\in \mathcal{Y}_\varepsilon$, and a sequence $(f_n^{in})_{n\ge 1}$ of initial conditions in $\mathcal{Y}_\varepsilon$ such that
\begin{equation}
f_n^{in} \rightharpoonup f^{in} \;\text{ in }\; X_1\ . \label{c30}
\end{equation}
Then, for any $T>0$,
\begin{equation*}
\Psi_\varepsilon(\cdot,f_n^{in}) \longrightarrow \Psi_\varepsilon(\cdot,f^{in}) \;\text{ in }\; C([0,T],X_{1,w})\ .
\end{equation*} 
\end{proposition}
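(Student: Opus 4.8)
The plan is to prove continuous dependence by a compactness-plus-uniqueness argument, exploiting the uniform bounds already available on $\mathcal{Y}_\varepsilon$. Write $f_n := \Psi_\varepsilon(\cdot,f_n^{in})$ and $f := \Psi_\varepsilon(\cdot,f^{in})$. First I would record that, since all the initial data lie in $\mathcal{Y}_\varepsilon$, Proposition~\ref{propc1} furnishes estimates for $(f_n)_{n\ge1}$ that are uniform in $n$: the mass identity \eqref{c6a}, the moment bounds \eqref{c6b}--\eqref{c7b}, the integrability bound \eqref{c8b}, and the time-equicontinuity estimate \eqref{c62}. In particular each $f_n(t)$ lies in the set $\mathcal{W}_\varepsilon$ of \eqref{c60} for every $t\ge0$, which is relatively sequentially weakly compact in $X_0$ by the Dunford--Pettis theorem (cf.\ \eqref{c61}), and $(f_n)_{n\ge1}$ is equicontinuous on $[0,T]$ for the norm topology of $X_0$, hence a fortiori for the weak topology. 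Exactly as in Step~1 of the proof of Proposition~\ref{propc1}, the Arzel\`a--Ascoli variant \cite[Theorem~A.3.1]{Vrab03} then yields a subsequence (not relabelled) and a limit $g\in C([0,\infty),X_{0,w})$ with $f_n \to g$ in $C([0,T],X_{0,w})$ for every $T>0$; interpolating the $X_0$-convergence against the uniform $X_{2+\gamma}$-bound upgrades this to convergence in $C([0,T],X_{1,w})$.

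Next I would pass to the limit in the weak formulation \eqref{c302} to identify $g$ as a weak solution of \eqref{c1}. This is the step where the structural hypotheses \eqref{a14}, \eqref{a15} and the uniform integrability of the $f_n$ enter: for a fixed test function $\vartheta\in L^\infty(0,\infty)$ one checks that the quadratic coagulation term and the linear fragmentation term pass to the limit, using that $f_n(s)\rightharpoonup g(s)$ in $X_{1,w}\cap X_{\gamma,w}$, that products $f_n(s,x)f_n(s,y)$ converge weakly in the relevant weighted $L^1$ space on $(0,\infty)^2$ because of the uniform higher-moment bounds (so there is no loss of mass at $x=0$, $y=0$, or $x,y\to\infty$), and the growth estimates $K_\varepsilon(x,y)\le K_0(x^\lambda+y^\lambda)+2\varepsilon K_0$ and $a_\varepsilon(x)\le a_0(x^\gamma+\varepsilon^2)$ together with $|\chi_\vartheta|$, $|N_\vartheta|$ bounded. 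One also passes to the limit in the initial condition via \eqref{c30} and the equicontinuity. Hence $g$ is a weak solution of \eqref{c1} with $g(0)=f^{in}$ and, by the uniqueness statement in Proposition~\ref{propc1} (Step~2, i.e.\ \cite[Theorem~8.2.55]{BLLxx}), $g=f=\Psi_\varepsilon(\cdot,f^{in})$. Since every subsequence of $(f_n)_{n\ge1}$ has a further subsequence converging in $C([0,T],X_{1,w})$ to the same limit $f$, the whole sequence converges, which is the assertion.

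The main obstacle is the convergence of the nonlinear (coagulation) term: weak convergence in $X_1$ is not preserved under products, so one must argue that $f_n(s,\cdot)\otimes f_n(s,\cdot)\rightharpoonup g(s,\cdot)\otimes g(s,\cdot)$ in an appropriate weighted $L^1((0,\infty)^2)$. The way to handle this is to truncate the size variables to a compact box $[\delta,R]^2$, on which the kernel $K_\varepsilon$ is bounded and the test integrand $K_\varepsilon(x,y)\chi_\vartheta(x,y)$ is a fixed $L^\infty$ function, so that the product of the two weakly-$L^1$-convergent factors converges (after extracting, using a.e.\ convergence of the one-dimensional marginals against the uniform integrability to upgrade weak to strong $L^1$-convergence on the box, or directly via the Dunford--Pettis characterisation of weak $L^1$ compactness in the plane); then one controls the errors outside $[\delta,R]^2$ uniformly in $n$ using $K_\varepsilon(x,y)\le K_0(x^\lambda+y^\lambda)+2\varepsilon K_0$, the uniform bound on $M_m(f_n(s))$ for $m\in(\lambda,1)\cup(1,2+\gamma]$, and $\lambda<1$, so the tails at $0$ and at $\infty$ are small. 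An entirely analogous but simpler truncation argument disposes of the fragmentation term. With this, the limit passage in \eqref{c302} is routine and the proof concludes as above.
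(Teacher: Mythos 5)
Your proposal follows the same route as the paper: uniform estimates from Proposition~\ref{propc1}, relative sequential weak compactness from \eqref{c61}, time equicontinuity from \eqref{c62}, a variant of the Arzel\`a--Ascoli theorem to extract a cluster point, identification of that cluster point as a weak solution emanating from $f^{in}$, the uniqueness clause of Proposition~\ref{propc1} to pin it down as $\Psi_\varepsilon(\cdot,f^{in})$, and a subsequence argument to upgrade to convergence of the full sequence. The only structural difference is cosmetic: you obtain compactness first in $C([0,T],X_{0,w})$ and then interpolate against the uniform $M_2$-bound, whereas the paper directly establishes $X_1$-equicontinuity by splitting at $R=1/\sqrt{t_2-t_1}$ and combining \eqref{c62} with \eqref{c6b}; the two are equivalent in spirit and the paper itself uses your two-step upgrade in the corresponding step of the proof of Proposition~\ref{propc1}.

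One sub-claim in your discussion of the quadratic term is incorrect and you should not rely on it. Weak $L^1$-convergence on a bounded domain, combined with uniform integrability, does \emph{not} upgrade to strong $L^1$-convergence, nor does it yield an a.e.-convergent subsequence. For instance $f_n(x)=1+\cos(2\pi n x)$ on $(0,1)$ is uniformly bounded (hence uniformly integrable), converges weakly to $1$, yet $\|f_n-1\|_{L^1(0,1)}=2/\pi$ for every $n$, and no subsequence converges a.e.\ (otherwise dominated convergence applied to $\cos^2$ would force $\int_0^1\cos^2(2\pi n_k x)\,\mathrm{d}x\to 0$, contradicting that this integral equals $1/2$). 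The correct mechanism for the coagulation term -- which the paper delegates to \cite{Stew89} and \cite{BLLxx} via Step~1 of Proposition~\ref{propc1} -- is to show directly that $f_n(s)\otimes f_n(s)\rightharpoonup g(s)\otimes g(s)$ in the appropriate weighted $L^1((0,\infty)^2)$: on the truncated box $[\delta,R]^2$, approximate the bounded integrand $K_\varepsilon\chi_\vartheta$ uniformly by finite sums of tensor products $\sum a_{ij}\phi_i(x)\psi_j(y)$ with $\phi_i,\psi_j$ bounded, so that the one-dimensional weak convergences can be applied separately to each factor; your truncation estimates, controlling the contributions from outside $[\delta,R]^2$ uniformly in $n$ via the moment bounds, are then exactly what is needed. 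With this one correction, the argument is complete.
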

%%%%%%%%%%%%%%%%

\begin{proof}
For $n\ge 1$ we put $f_{\varepsilon,n}:= \Psi_\varepsilon(\cdot,f_n^{in})$. On the one hand, it follows from \eqref{c6b}, \eqref{c7b}, and \eqref{c8b} that
\begin{equation}
f_{\varepsilon,n}(t) \in \mathcal{W}_\varepsilon \ , \qquad t\ge 0\ , \ n\ge 1\ , \label{c63}
\end{equation}
recalling that the set $\mathcal{W}_\varepsilon$ is defined in \eqref{c60}. On the other hand, let $0\le t_1 < t_2$ and $n\ge 1$. We infer from \eqref{c62} that
\begin{equation*}
\|f_{\varepsilon,n}(t_2)-f_{\varepsilon,n}(t_1)\|_1 \le C_{\ref{cstB6}} \left[ \sigma + \left( \sigma + \mu_0 \varepsilon^{-1} \right)^2 \right] (t_2-t_1)\ .
\end{equation*} 
Combining this estimate with \eqref{c6b} gives, for $R>0$, 
\begin{align*}
\int_0^\infty x \left| f_{\varepsilon,n}(t_2,x) - f_{\varepsilon,n}(t_1,x) \right|\ \mathrm{d}x & \le R \|f_{\varepsilon,n}(t_2)-f_{\varepsilon,n}(t_1)\|_1 \\
& \qquad + \frac{1}{R} \int_R^\infty x^2 \left( f_{\varepsilon,n}(t_2,x) + f_{\varepsilon,n}(t_1,x) \right)\ \mathrm{d}x \\
& \le C_{\ref{cstB6}} \left[ \sigma + \left( \sigma + \mu_0 \varepsilon^{-1} \right)^2 \right] (t_2-t_1) + \frac{2\sigma}{R}\ .
\end{align*}
Now, taking $R=1/\sqrt{t_2-t_1}$ in the previous inequality, we end up with
\begin{equation*}
\int_0^\infty x \left| f_{\varepsilon,n}(t_2,x) - f_{\varepsilon,n}(t_1,x) \right|\ \mathrm{d}x \le \left\{ C_{\ref{cstB6}} \left[ \sigma + \left( \sigma + \mu_0 \varepsilon^{-1} \right)^2 \right] + 2 \sigma \right\} \sqrt{t_2-t_1}\ . 
\end{equation*}
Consequently, the sequence $(f_{\varepsilon,n})_{n\ge 1}$ is equicontinuous at each $t\ge 0$ for the norm-topology of $X_1$ and thus also for the weak topology of $X_1$. Recalling \eqref{c61} and \eqref{c63}, we are again in a position to use the variant of the Arzel\`a-Ascoli theorem stated in \cite[Theorem~A.3.1]{Vrab03} to deduce that there are $F_\varepsilon\in C([0,\infty),X_{1,w})$ and a subsequence $(f_{\varepsilon,n_k})_{k\ge 1}$ of $(f_{\varepsilon,n})_{n\ge 1}$ (possibly depending on $\varepsilon$) such that
\begin{equation}
f_{\varepsilon,n_k} \longrightarrow F_\varepsilon \;\text{ in }\; C([0,T],X_{1,w}) \label{c32}
\end{equation}  
for any $T>0$. Since $f_{\varepsilon,n_k}$ satisfies \eqref{c6}, \eqref{c7}, \eqref{c8}, \eqref{c9}, and \eqref{c32} for $k\ge 1$, we can argue as in Step~1 of the proof of Proposition~\ref{propc1} to establish that $F_\varepsilon$ is a weak solution to \eqref{c1} with initial condition $f^{in}$ and also satisfies \eqref{c6}, \eqref{c7}, \eqref{c8}, and \eqref{c9}, along with
\begin{equation*}
f_{\varepsilon,n_k} \longrightarrow F_\varepsilon \;\text{ in }\; C([0,T],X_{0,w}\cap X_{\gamma,w}) 
\end{equation*}
for any $T>0$. The uniqueness assertion in Proposition~\ref{propc1} then guarantees that $F_\varepsilon = \Psi_\varepsilon(\cdot,f^{in})$. 

A consequence of the above analysis is that $\Psi_\varepsilon(\cdot,f^{in})$ is the only cluster point of the sequence $(f_{\varepsilon,n})_{n\ge 1}$ in the space $C([0,T],X_{1,w})$, whatever the value of $T>0$. Together with the compactness of $(f_{\varepsilon,n})_{n\ge 1}$, this observation ensures that it is the whole sequence $(f_{\varepsilon,n})_{n\ge 1}$ which converges to $\Psi_\varepsilon(\cdot,f^{in})$ in $C([0,T],X_{1,w})$ for any $T>0$, thereby completing the proof of Proposition~\ref{propc2}.
\end{proof}

%%%%%%%%%%%%%%%%
%%%%%%%%%%%%%%%%
\subsection{Stationary Solution to \eqref{c1}}\label{sec3.5}
%%%%%%%%%%%%%%%%
%%%%%%%%%%%%%%%%

Thanks to the outcome of Sections~\ref{sec3.1}-\ref{sec3.3}, we are now in a position to prove the existence of at least one stationary weak solution $\varphi_{\varepsilon}$ to the coagulation-fragmentation equation \eqref{c1} for $\varepsilon\in (0,\varepsilon_{m_0,\sigma})$, along with some estimates on $\varphi_{\varepsilon}$ which will be needed in Section~\ref{sec4} to carry out the limit $\varepsilon\to 0$. 

%%%%%%%%%%%%%%%%
\begin{theorem}\label{thmc4}
For $\varepsilon\in (0,\varepsilon_{m_0,\sigma})$, the coagulation-fragmentation equation \eqref{c1a} has a stationary weak solution $\varphi_{\varepsilon}\in \mathcal{Z}_\varepsilon \cap L^{p_1}((0,\infty),x^{m_1+\gamma}\mathrm{d}x)$ satisfying 
\begin{equation}
\frac{1}{2} \int_0^\infty \int_0^\infty K_\varepsilon(x,y) \chi_\vartheta(x,y) \varphi_{\varepsilon}(x) \varphi_{\varepsilon}(y)\ \mathrm{d}y \mathrm{d}x = \int_0^\infty a_\varepsilon(x) N_\vartheta(x) \varphi_{\varepsilon}(x)\ \mathrm{d}x \label{c50}
\end{equation}
for all $\vartheta\in L^\infty(0,\infty)$ and
\begin{equation}
L_{m_1+\gamma,p_1}(\varphi_{\varepsilon}) \le \sigma_1\ , \label{c70}
\end{equation}
the constant $\sigma_1$ being defined in \eqref{c301}
\end{theorem}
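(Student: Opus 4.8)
The plan is to deduce Theorem~\ref{thmc4} from the combination of Proposition~\ref{propc1}, Proposition~\ref{propc2}, and Proposition~\ref{propc3} via an abstract fixed point argument, in the spirit of \cite[Theorem~1.2]{EMRR05} and \cite[Theorem~16.5]{Am90}. First I would record the abstract setting: the set $\mathcal{Z}_\varepsilon$ defined in \eqref{c35} is, by construction of the moment and integrability bounds appearing in its definition together with the Dunford--Pettis theorem, a nonempty, convex, and sequentially weakly compact subset of $X_1$ — nonempty because one can exhibit an explicit element (for instance a suitable multiple of $x\mapsto e^{-x}$, whose moments of all orders are finite and which can be rescaled so that $M_1=\varrho$; one checks \eqref{c300}, \eqref{c35b}--\eqref{c35d} hold for this choice since the $\mu_m$ dominate $\Gamma(m+1)\varrho^m$ and $\sigma$ was chosen large), convex because all the constraints in \eqref{c300} and \eqref{c35} are defined by linear inequalities on the moments and a convexity inequality for the $L^{p_1}$-seminorm, and sequentially weakly compact in $X_1$ because the bound $M_{2+\gamma}(h)\le\sigma$ prevents escape of mass at infinity while $M_{m_0}(h)<\infty$ with $m_0<1$ controls concentration near the origin, so the set is uniformly integrable and tight.

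Next I would invoke Proposition~\ref{propc3}, which tells us precisely that for $f^{in}\in\mathcal{Z}_\varepsilon$ one has $\Psi_\varepsilon(t,f^{in})\in\mathcal{Z}_\varepsilon$ for all $t\ge 0$; hence each time-$t$ map $S_t:=\Psi_\varepsilon(t,\cdot)$ sends $\mathcal{Z}_\varepsilon$ into itself. By Proposition~\ref{propc2}, each $S_t$ is continuous from $\mathcal{Z}_\varepsilon$, equipped with the weak topology of $X_1$, into itself, since weak convergence in $X_1$ of initial data forces $C([0,T],X_{1,w})$-convergence of the solutions, in particular weak convergence at time $t$. One therefore has, for every $t>0$, a weakly continuous self-map of a convex weakly compact subset of the (separable, reflexive-on-this-set) space $X_1$, so Tychonov's (Schauder--Tychonoff) fixed point theorem yields a fixed point $\varphi_\varepsilon^{(t)}\in\mathcal{Z}_\varepsilon$ of $S_t$. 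To upgrade these time-$t$ fixed points to a genuine stationary solution I would use the standard argument (as in \cite[Proof of Theorem~5.2]{GPV04} or \cite[Theorem~16.5]{Am90}): pick a sequence $t_n\downarrow 0$, let $\varphi_n\in\mathcal{Z}_\varepsilon$ be a fixed point of $S_{t_n}$, extract by weak compactness of $\mathcal{Z}_\varepsilon$ a subsequence $\varphi_{n_k}\rightharpoonup\varphi_\varepsilon\in\mathcal{Z}_\varepsilon$ in $X_1$; then for each fixed $s\ge 0$ and all large $k$ one writes $s$ as a multiple of $t_{n_k}$ up to a remainder tending to $0$, uses the semigroup property, the equicontinuity estimate \eqref{c62}, and the continuity from Proposition~\ref{propc2} to conclude $\Psi_\varepsilon(s,\varphi_\varepsilon)=\varphi_\varepsilon$ for all $s\ge 0$; that is, $\varphi_\varepsilon$ is a stationary solution of \eqref{c1a}.

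It then remains to translate "$\varphi_\varepsilon$ is a time-independent weak solution" into the weak formulation \eqref{c50} and to establish the extra integrability \eqref{c70}. The first is immediate from \eqref{c302} in Proposition~\ref{propc1}: since $t\mapsto\int_0^\infty\vartheta(x)\varphi_\varepsilon(x)\,\mathrm{d}x$ is constant, its derivative vanishes, giving exactly \eqref{c50} for all $\vartheta\in L^\infty(0,\infty)$. For \eqref{c70} I would apply the time-averaged estimate \eqref{c9} to the solution $\Psi_\varepsilon(\cdot,\varphi_\varepsilon)\equiv\varphi_\varepsilon$: for every $t>0$,
\begin{equation*}
L_{m_1+\gamma,p_1}(\varphi_\varepsilon) = \frac{1}{t}\int_0^t L_{m_1+\gamma,p_1}(\varphi_\varepsilon)\,\mathrm{d}s \le \frac{1}{a_0 t} L_{m_1,p_1}(\varphi_\varepsilon) + \sigma_1\ ,
\end{equation*}
and since $\varphi_\varepsilon\in\mathcal{Z}_\varepsilon\subset\mathcal{Y}_\varepsilon$ gives $L_{m_1,p_1}(\varphi_\varepsilon)\le\sigma_1\varepsilon^{-2}<\infty$ by \eqref{c300d}, letting $t\to\infty$ yields $L_{m_1+\gamma,p_1}(\varphi_\varepsilon)\le\sigma_1$, which is \eqref{c70} and in particular shows $\varphi_\varepsilon\in L^{p_1}((0,\infty),x^{m_1+\gamma}\,\mathrm{d}x)$. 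The main obstacle I anticipate is not any single estimate — all the hard analytic work has been front-loaded into Section~\ref{sec2} and Proposition~\ref{propc1} — but rather the careful bookkeeping needed to verify that $\mathcal{Z}_\varepsilon$ is genuinely nonempty and that the self-map/continuity/compactness hypotheses of the abstract fixed point theorem are all met in the same topology (weak $X_1$), together with the slightly delicate passage from time-$t$ fixed points to a stationary state, where one must combine the semigroup property with the Lipschitz-in-time bound \eqref{c62} uniformly; this is routine but must be done with care so that the limit $\varphi_\varepsilon$ inherits membership in $\mathcal{Z}_\varepsilon$ (which it does, $\mathcal{Z}_\varepsilon$ being weakly closed) and satisfies the weak formulation for all test functions and all times.
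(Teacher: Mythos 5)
Your proof is correct and follows essentially the same route as the paper: check that $\mathcal{Z}_\varepsilon$ is nonempty, convex, closed, and sequentially weakly compact in $X_1$, combine Propositions~\ref{propc1}--\ref{propc3} to get a semiflow that is weakly $X_1$-continuous and leaves $\mathcal{Z}_\varepsilon$ invariant, deduce the existence of a stationary point, and then read \eqref{c50} off \eqref{c302} and \eqref{c70} off \eqref{c9}; the paper simply cites \cite[Theorem~1.2]{EMRR05} for the abstract fixed-point step that you unfold via time-$t$ fixed points. One small correction: in the passage from time-$t_n$ fixed points to a genuine stationary solution, the equicontinuity you need is the $\sqrt{t}$-H\"older estimate in the $X_1$ norm derived in the proof of Proposition~\ref{propc2} (which combines \eqref{c62} with the moment bound \eqref{c6b}), not the $X_0$-norm estimate \eqref{c62} alone.
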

%%%%%%%%%%%%%%%%

\begin{proof}
Let $\varepsilon\in (0,\varepsilon_{m_0,\sigma})$. By Propositions~\ref{propc1} and~\ref{propc2}, $\Psi_\varepsilon$ is a dynamical system on $\mathcal{Y}_\varepsilon$ for the weak topology of $X_1$ and, according to Proposition~\ref{propc3}, the subset $\mathcal{Z}_\varepsilon$ of $\mathcal{Y}_\varepsilon$ is invariant under the action of $\Psi_\varepsilon$; that is, $\Psi_\varepsilon(t,\mathcal{Z}_\varepsilon) \subset \mathcal{Z}_\varepsilon$ for all $t\ge 0$. Since $x\mapsto \varrho^{-1} e^{-x/\varrho}$ belongs to $\mathcal{Z}_\varepsilon$, the set $\mathcal{Z}_\varepsilon$ is a non-empty convex and closed subset of $X_1$. In addition, owing to the Dunford-Pettis theorem, $\mathcal{Z}_\varepsilon$ is a sequentially weakly compact subset of $X_1$. Thanks to these properties, we infer from \cite[Theorem~1.2]{EMRR05} that there is $\varphi_{\varepsilon}\in \mathcal{Z}_\varepsilon$ such that $\Psi_\varepsilon(t,\varphi_{\varepsilon})=\varphi_{\varepsilon}$ for all $t\ge 0$. In other words, $\varphi_{\varepsilon}$ is a stationary solution to \eqref{c1} as described in Proposition~\ref{propc1}, and the weak formulation \eqref{c50} readily follows from \eqref{c302}. We also deduce from \eqref{c9} that, for $t>0$,
\begin{equation*}
L_{m_1+\gamma,p_1}(\varphi_{\varepsilon}) = \frac{1}{t} \int_0^t L_{m_1+\gamma,p_1}(\varphi_{\varepsilon})\ \mathrm{d}s \le \frac{1}{a_0 t} L_{m_1,p_1}(\varphi_{\varepsilon}) + \sigma_1\ .
\end{equation*}
Letting $t\to\infty$ in the above inequality gives \eqref{c70} and completes the proof of Theorem~\ref{thmc4}.
\end{proof}

%%%%%%%%%%%%%%%%
%%%%%%%%%%%%%%%%
\subsection{Proof of Theorem~\ref{thm1}}\label{sec4}
%%%%%%%%%%%%%%%%
%%%%%%%%%%%%%%%%

We are left with investigating the limit $\varepsilon\to 0$ (if any) of the family $(\varphi_{\varepsilon})_{\varepsilon\in (0,\varepsilon_{m_0,\sigma})}$ of stationary weak solutions to \eqref{CFe} constructed in Theorem~\ref{thmc4}. To this end, we first observe that, since $\varphi_{\varepsilon}\in \mathcal{Z}_\varepsilon$ for all $\varepsilon\in (0,\varepsilon_{m_0,\sigma})$, it satisfies
\begin{align}
M_1(\varphi_{\varepsilon}) & = \varrho\ , \qquad \varepsilon\in (0,\varepsilon_{m_0,\sigma}) \ , \label{d0} \\
M_m(\varphi_{\varepsilon}) & \le \mu_m\ , \qquad m>2+\gamma\ , \ \varepsilon\in (0,\varepsilon_{m_0,\sigma}) \ , \label{d1} \\
M_m(\varphi_{\varepsilon}) & \le \sigma\ , \qquad m\in (1,2+\gamma]\ , \ \varepsilon\in (0,\varepsilon_{m_0,\sigma}) \ , \label{d2} \\
M_m(\varphi_{\varepsilon}) & \le \sigma + \mu_m\ , \qquad m\in (\lambda,1)\ , \ \varepsilon\in (0,\varepsilon_{m_0,\sigma}) \ , \label{d3} 
\end{align}
and
\begin{equation}
L_{m_1+\gamma,p_1}(\varphi_{\varepsilon}) \le \sigma_1\ , \qquad \varepsilon\in (0,\varepsilon_{m_0,\sigma}) \ , \label{d4} 
\end{equation}
see the definition \eqref{c35} of $\mathcal{Z}_\varepsilon$. We claim that these estimates guarantee that 
\begin{equation}
(\varphi_{\varepsilon})_{\varepsilon\in (0,\varepsilon_{m_0,\sigma})} \,\text{ is relatively sequentially weakly compact in }\, X_m \,\text{ for any }\, m>\lambda\ . \label{d100}
\end{equation}
Indeed, let $E$ be a measurable subset of $(0,\infty)$ with finite measure and $R>1$. We infer from H\"older's inequality that, for $\varepsilon\in (0,\varepsilon_{m_0,\sigma})$,
\begin{align*}
\int_E x^m \varphi_{\varepsilon}(x)\ \mathrm{d}x & \le \int_0^{1/R} x^m \varphi_{\varepsilon}(x)\ \mathrm{d}x + \int_{1/R}^R  x^m  \mathbf{1}_E(x) \varphi_{\varepsilon}(x)\ \mathrm{d}x \\
& \qquad + \int_R^\infty x^m \varphi_{\varepsilon}(x)\ \mathrm{d}x \\
& \le R^{(\lambda-m)/2} \int_0^{1/R} x^{(m+\lambda)/2} \varphi_{\varepsilon}(x)\ \mathrm{d}x + R^m |E|^{(p_1-1)/p_1} \left( \int_{1/R}^R \varphi_{\varepsilon}(x)^{p_1}\ \mathrm{d}x \right)^{1/p_1} \\
& \qquad + R^{-2-\gamma} \int_R^\infty x^{m+2+\gamma} \varphi_{\varepsilon}(x)\ \mathrm{d}x \\
& \le R^{(\lambda-m)/2} M_{(m+\lambda)/2}(\varphi_{\varepsilon}) + R^{(mp_1+ m_1+\gamma)/p_1} |E|^{(p_1-1)/p_1} L_{m_1+\gamma,p_1}(\varphi_{\varepsilon})^{1/p_1} \\
& \qquad + R^{-2-\gamma} M_{m+2+\gamma}(\varphi_{\varepsilon})\ .
\end{align*}
We now infer from \eqref{d1}, \eqref{d2}, \eqref{d3}, and \eqref{d4} that
\begin{equation}
\int_E x^m \varphi_{\varepsilon}(x)\ \mathrm{d}x \le A_{m,\sigma} \left( R^{(\lambda-m)/2} + R^{(mp_1+ m_1+\gamma)/p_1} |E|^{(p_1-1)/p_1} + R^{-2-\gamma} \right)\ , \label{d50}
\end{equation}
with
\begin{equation*}
A_{m,\sigma} := \sup_{\varepsilon\in (0,\varepsilon_{m_0,\sigma})} \left\{ M_{(m+\lambda)/2}(\varphi_{\varepsilon}) \right\} + \sigma_1^{1/p_1} + \mu_{m+2+\gamma}< \infty\ .
\end{equation*}
Introducing
\begin{equation*}
\eta_{X_m}(\delta) := \sup\left\{ \int_E x^m \varphi_{\varepsilon}(x)\ \mathrm{d}x\ :\ |E|<\delta\ , \ \varepsilon\in (0,\varepsilon_{m_0,\sigma}) \right\}\ , \qquad \delta\in (0,1)\ ,
\end{equation*}
we deduce from \eqref{d50} that
\begin{equation*}
\eta_{X_m}(\delta) \le A_{m,\sigma} \left( R^{(\lambda-m)/2} + R^{(mp_1+ m_1+\gamma)/p_1} \delta^{(p_1-1)/p_1} + R^{-2-\gamma} \right)\ .
\end{equation*}
Hence, since $p_1>1$,
\begin{equation*}
\limsup_{\delta\to 0} \eta_{X_m}(\delta) \le A_{m,\sigma} \left( R^{(\lambda-m)/2} + R^{-2-\gamma} \right)\ .
\end{equation*}
We finally let $R\to\infty$ to conclude that
\begin{equation}
\lim_{\delta\to 0} \eta_{X_m}(\delta)= 0\ . \label{d5}
\end{equation}
Similarly, for $\varepsilon\in (0,\varepsilon_{m_0,\sigma})$ and $R>1$, it follows from \eqref{d1} that
\begin{equation*}
\int_R^\infty x^m \varphi_{\varepsilon}(x)\ \mathrm{d}x \le R^{-2-\gamma} \mu_{m+2+\gamma}\ ,
\end{equation*}
and thus
\begin{equation}
\lim_{R\to\infty} \sup_{\varepsilon\in (0,\varepsilon_{m_0,\sigma})} \left\{ \int_R^\infty x^m \varphi_{\varepsilon}(x)\ \mathrm{d}x \right\} = 0\ . \label{d6}
\end{equation}
The claim \eqref{d100} is then a consequence of \eqref{d5}, \eqref{d6}, and the Dunford-Pettis theorem. 

We now infer from \eqref{d100} and the reflexivity of $L^{p_1}((0,\infty),x^{m_1+\gamma}\mathrm{d}x)$ that there are a subsequence $(\varphi_{\varepsilon_k})_{k\ge 1}$ of the family $(\varphi_{\varepsilon})_{\varepsilon\in (0,\varepsilon_{m_0,\sigma})}$ and
\begin{equation}
\varphi\in X_1^+ \cap L^{p_1}((0,\infty),x^{m_1+\gamma}\mathrm{d}x) \cap \bigcap_{m>\lambda} X_m \label{d101}
\end{equation}
such that
\begin{align}
\varphi_{\varepsilon_k} \rightharpoonup \varphi \;\text{ in }\; X_m\ , \qquad m>\lambda\ , \label{d7} \\
\varphi_{\varepsilon_k} \rightharpoonup \varphi \;\text{ in }\; L^{p_1}((0,\infty),x^{m_1+\gamma}\mathrm{d}x)\ . \nonumber
\end{align}
A straightforward consequence of \eqref{d0} and \eqref{d7} (with $m=1$) is that
\begin{equation}
M_1(\varphi) = \varrho\ . \label{d9}
\end{equation}

Let us now check that $\varphi$ is a stationary weak solution to \eqref{a1}, as described in Theorem~\ref{thm1}~\textbf{(s3)}. To this end, we consider $\vartheta\in \Theta_1$ and first note that
\begin{equation}
|\chi_\vartheta(x,y)| \le 2 \|\vartheta'\|_\infty \min\{x,y\}\ , \qquad (x,y)\in (0,\infty)^2\ , \label{d10}
\end{equation}
and
\begin{equation}
|N_\vartheta(x)| \le 2 \|\vartheta'\|_\infty x\ , \qquad x>0\ , \label{d11}
\end{equation}
by \eqref{a15b} and \eqref{a15c}. 

Let us begin with the coagulation term. By \eqref{d0}, \eqref{d3}, and H\"older's inequality,
\begin{align*}
& \left| 2 \varepsilon_k K_0 \int_0^\infty \int_0^\infty \chi_\vartheta(x,y) \varphi_{\varepsilon_k}(x) \varphi_{\varepsilon_k}(y)\ \mathrm{d}y\mathrm{d}x \right| \\
& \qquad \le 4 \varepsilon_k K_0 \|\vartheta'\|_\infty M_{(\lambda+1)/2}(\varphi_{\varepsilon_k}) M_{(1-\lambda)/2}(\varphi_{\varepsilon_k}) \\
& \qquad \le 4 \varepsilon_k K_0 \|\vartheta'\|_\infty M_{(\lambda+1)/2}(\varphi_{\varepsilon_k}) M_1(\varphi_{\varepsilon_k})^{(1-\lambda)/2} M_0(\varphi_{\varepsilon_k})^{(1+\lambda)/2} \\
& \qquad \le 4 \varepsilon_k K_0 \|\vartheta'\|_\infty (\sigma + \mu_{(\lambda+1)/2}) \varrho^{(1-\lambda)/2} M_0(\varphi_{\varepsilon_k})^{(1+\lambda)/2}\ .
\end{align*}
Since $\varphi_{\varepsilon_k}\in \mathcal{Z}_{\varepsilon_k}\subset \mathcal{Y}_{\varepsilon_k}$, we further deduce from \eqref{c300c} that
\begin{align*}
& \left| 2 \varepsilon_k K_0 \int_0^\infty \int_0^\infty \chi_\vartheta(x,y) \varphi_{\varepsilon_k}(x) \varphi_{\varepsilon_k}(y)\ \mathrm{d}y\mathrm{d}x \right| \\
& \qquad \le 4 \varepsilon_k K_0 \|\vartheta'\|_\infty (\sigma + \mu_{(\lambda+1)/2}) \varrho^{(1-\lambda)/2} (\sigma + \mu_0 \varepsilon_k^{-1})^{(1+\lambda)/2} \\
& \qquad \le 4 K_0 \|\vartheta'\|_\infty (\sigma + \mu_{(\lambda+1)/2}) \varrho^{(1-\lambda)/2} (\sigma + \mu_0 )^{(1+\lambda)/2} \varepsilon_k^{(1-\lambda)/2}.
\end{align*}
Consequently,
\begin{equation}
\lim_{k\to \infty} 2 \varepsilon_k K_0 \int_0^\infty \int_0^\infty \chi_\vartheta(x,y) \varphi_{\varepsilon_k}(x) \varphi_{\varepsilon_k}(y)\ \mathrm{d}y\mathrm{d}x = 0\ . \label{d12}
\end{equation}
Next, by \eqref{d10},
\begin{equation*}
\frac{|\chi_\vartheta(x,y)|}{x^{(2\beta+1-\lambda)/2} y^{(2\alpha + 1 - \lambda)/2}} \le 2 \|\vartheta'\|_\infty\ , \qquad (x,y)\in (0,\infty)\ ,
\end{equation*}
and, since
\begin{equation*}
\chi_\vartheta(x,y) x^\alpha y^\beta \varphi_{\varepsilon_k}(x) \varphi_{\varepsilon_k}(y) = \frac{\chi_\vartheta(x,y)}{x^{(2\beta+1-\lambda)/2} y^{(2\alpha + 1 - \lambda)/2}} x^{(1+\lambda)/2} \varphi_{\varepsilon_k}(x) y^{(1+\lambda)/2} \varphi_{\varepsilon_k}(y)\ ,
\end{equation*}
it follows from \eqref{d7} (with $m=(1+\lambda)/2$) that
\begin{align}
& \lim_{k\to\infty} \int_0^\infty \int_0^\infty \chi_\vartheta(x,y) x^\alpha y^\beta \varphi_{\varepsilon_k}(x) \varphi_{\varepsilon_k}(y)\ \mathrm{d}y\mathrm{d}x \nonumber \\
& \qquad = \lim_{k\to\infty} \int_0^\infty \int_0^\infty \frac{\chi_\vartheta(x,y)}{x^{(2\beta+1-\lambda)/2} y^{(2\alpha + 1 - \lambda)/2}} x^{(1+\lambda)/2} \varphi_{\varepsilon_k}(x) y^{(1+\lambda)/2} \varphi_{\varepsilon_k}(y)\ \mathrm{d}y\mathrm{d}x \nonumber \\
& \qquad = \int_0^\infty \int_0^\infty \frac{\chi_\vartheta(x,y)}{x^{(2\beta+1-\lambda)/2} y^{(2\alpha + 1 - \lambda)/2}} x^{(1+\lambda)/2} \varphi(x) y^{(1+\lambda)/2} \varphi(y)\ \mathrm{d}y\mathrm{d}x \nonumber \\
& \qquad = \int_0^\infty \int_0^\infty \chi_\vartheta(x,y) x^\alpha y^\beta \varphi(x) \varphi(y)\ \mathrm{d}y\mathrm{d}x \ . \label{d13}
\end{align}
Similarly, 
\begin{equation}
\begin{split}
& \lim_{k\to\infty} \int_0^\infty \int_0^\infty \chi_\vartheta(x,y) x^\beta y^\alpha \varphi_{\varepsilon_k}(x) \varphi_{\varepsilon_k}(y)\ \mathrm{d}y\mathrm{d}x  \\
& \qquad = \int_0^\infty \int_0^\infty \chi_\vartheta(x,y) x^\beta y^\alpha \varphi(x) \varphi(y)\ \mathrm{d}y\mathrm{d}x \ . 
\end{split} \label{d14}
\end{equation}

For the fragmentation term, it readily follows from \eqref{d0} and \eqref{d11} that
\begin{equation*}
\left| a_0 \varepsilon_k^2 \int_0^\infty N_\vartheta(x) \varphi_{\varepsilon_k}(x)\ \mathrm{d}x \right| \le 2 a_0 \varepsilon_k^2 \|\vartheta'\|_\infty M_1(\varphi_{\varepsilon_k}) = 2 a_0 \varepsilon_k^2 \|\vartheta'\|_\infty \varrho\ .
\end{equation*}
Hence,
\begin{equation}
\lim_{k\to\infty} a_0 \varepsilon_k^2 \int_0^\infty N_\vartheta(x) \varphi_{\varepsilon_k}(x)\ \mathrm{d}x = 0\ . \label{d15}
\end{equation}
We finally infer from \eqref{d7} (with $m=1+\gamma$) and \eqref{d11} that
\begin{align}
& \lim_{k\to \infty} \int_0^\infty x^\gamma N_\vartheta(x) \varphi_{\varepsilon_k}(x)\ \mathrm{d}x = \lim_{k\to \infty} \int_0^\infty \frac{N_\vartheta(x)}{x} x^{1+\gamma} \varphi_{\varepsilon_k}(x)\ \mathrm{d}x \nonumber \\ 
& \qquad\qquad = \int_0^\infty \frac{N_\vartheta(x)}{x} x^{1+\gamma} \varphi(x)\ \mathrm{d}x = \int_0^\infty x^\gamma N_\vartheta(x) \varphi(x)\ \mathrm{d}x\ . \label{d16}
\end{align}
Collecting \eqref{d12}, \eqref{d13}, \eqref{d14}, \eqref{d15}, and \eqref{d16} allows us to take the limit $\varepsilon_k\to 0$ in \eqref{c50} and conclude that $\varphi$ is a stationary weak solution to \eqref{a1} in the sense of Theorem~\ref{thm1}~\textbf{(s3)}. Recalling \eqref{d101} and \eqref{d9}, we have shown that $\varphi$ satisfies the properties~\textbf{(s1)}-\textbf{(s3)} stated in Theorem~\ref{thm1}. 

%%%%%%%%%%%%%%%%
%%%%%%%%%%%%%%%%
\section{Small Size Behaviour}\label{sec5}
%%%%%%%%%%%%%%%%
%%%%%%%%%%%%%%%%

This section is devoted to the proof of Proposition~\ref{prop2}. The starting point is the finiteness of some moments of order lower than $\lambda$ when $\gamma\ge \alpha$.

%%%%%%%%%%%%%%%%
\begin{lemma}\label{lems1}
Let $\varrho>0$ and consider a stationary weak solution  $\varphi$ to \eqref{a1} satisfying the properties~\textbf{(s1)}-\textbf{(s3)} stated in Theorem~\ref{thm1}. 
\begin{itemize}[label=$-$]
	\item If $\gamma>\alpha$, then $\varphi\in X_\alpha$;
	\item If $\gamma=\alpha$, then $\varphi\in X_\beta$.
\end{itemize}
\end{lemma}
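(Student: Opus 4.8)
The plan is to feed the weak formulation \textbf{(s3)} the regularized power functions $\vartheta_{m,\delta}(x):=(x+\delta)^m-\delta^m$ (with $m\in(0,1)$, $\delta\in(0,1)$), pass to the limit $\delta\to0$ to obtain a balance identity, and then exploit the sign of the coagulation term together with the homogeneity of $K$ to extract a moment bound, which is then iterated down to the critical exponent. Since $\vartheta_{m,\delta}\in\Theta_1$ is concave, nondecreasing and vanishes at $0$, one has $\chi_{\vartheta_{m,\delta}}\le0$ on $(0,\infty)^2$ and, using $\int_0^1 zB(z)\,\mathrm dz=1$, $N_{\vartheta_{m,\delta}}\le0$ on $(0,\infty)$, together with the $\delta$-uniform bound $|N_{\vartheta_{m,\delta}}(y)|\le\mathfrak b_m\,y^m$. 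As $-\chi_{\vartheta_{m,\delta}}$ is nonincreasing in $\delta$ and increases to $x^m+y^m-(x+y)^m$ as $\delta\downarrow0$, monotone convergence handles the coagulation term, while the $\delta$-uniform bound and dominated convergence handle the fragmentation term provided $M_{m+\gamma}(\varphi)<\infty$, i.e. $m+\gamma>\lambda$. This yields, for every such $m$,
\[
\tfrac12\int_0^\infty\!\!\int_0^\infty K(x,y)\,\bigl[x^m+y^m-(x+y)^m\bigr]\,\varphi(x)\varphi(y)\,\mathrm dy\,\mathrm dx=a_0(\mathfrak b_m-1)\,M_{m+\gamma}(\varphi)<\infty .
\]

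Next I would turn this identity into a moment estimate. Fix a small $\delta\in(0,1)$ (if $\varphi$ charges only $[1,\infty)$ there is nothing to prove, since $M_\alpha(\varphi)\le M_1(\varphi)<\infty$ there). On $\{x<\delta\}\times\{\delta<y<1\}$ one has $K(x,y)\ge K_0 x^\alpha y^\beta$ and, by concavity of $t\mapsto t^m$, $x^m+y^m-(x+y)^m\ge(1-m)x^m$; restricting the (nonnegative) integrand of the identity to this region therefore gives
\[
(1-m)K_0\Bigl(\int_0^\delta x^{\alpha+m}\varphi\Bigr)\Bigl(\int_\delta^1 y^\beta\varphi\Bigr)\ \le\ a_0(\mathfrak b_m-1)\,M_{m+\gamma}(\varphi).
\]
Writing $M_{m+\gamma}(\varphi)=\int_0^\delta x^{m+\gamma}\varphi+\int_\delta^\infty x^{m+\gamma}\varphi$ and bounding $\int_0^\delta x^{m+\gamma}\varphi\le\delta^{\gamma-\alpha}\int_0^\delta x^{m+\alpha}\varphi$ (using $\gamma\ge\alpha$), the quantity $\int_0^\delta x^{m+\alpha}\varphi$ occurs on both sides, and for $\delta$ small its coefficient on the left exceeds the one on the right; the term can then be absorbed, giving $\int_0^\delta x^{m+\alpha}\varphi<\infty$, hence $\varphi\in X_{\alpha+m}$ (the tail $\int_\delta^\infty x^{\alpha+m}\varphi$ being finite since $\alpha+m>\lambda$ once $\varphi$ has enough a priori moments).

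To reach the endpoint, I would iterate. For $\gamma>\alpha$: starting from $\varphi\in\bigcap_{s>\lambda}X_s$, the previous step gives $\varphi\in X_s$ for $s>\alpha+\max\{0,\lambda-\gamma\}$, and letting $m$ decrease to $\max\{0,\lambda-\gamma\}$ with $\delta$ fixed — whereupon all quantities on the right converge — even the endpoint $\varphi\in X_{\alpha+\max\{0,\lambda-\gamma\}}$ is reached. Re-running the construction with this improved integrability (so that $m+\gamma$ is only required to be at least the current threshold), the admissible lower endpoint of the exponent drops by $\gamma-\alpha>0$ at each round, and after finitely many rounds it reaches $\alpha$, i.e. $\varphi\in X_\alpha$. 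For $\gamma=\alpha<\beta$ one proceeds similarly but with a test function chosen so that the fragmentation side is proportional to $M_\beta(\varphi)$ itself (essentially $\vartheta\sim x^{\beta-\alpha}$, suitably regularized): there the critical exponent $m+\gamma=\lambda$ occurs, and one combines monotone convergence for the coagulation term with Fatou's lemma for the fragmentation term to bound $M_\beta(\varphi)$; the degenerate sub-case $\gamma=\alpha=\beta$ is the most delicate and appears to need the additional information carried by \textbf{(s2)}.

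The main obstacle is precisely the passages to the limit: in Step~1 the Lipschitz norm $\|\vartheta_{m,\delta}'\|_\infty=m\delta^{m-1}$ blows up as $\delta\to0$, so one cannot use the naive bound $|\chi_{\vartheta}|\lesssim\min\{x,y\}$ uniformly and must instead rely on the monotonicity of $-\chi_{\vartheta_{m,\delta}}$ and on the $\delta$-independent bound $|N_{\vartheta_{m,\delta}}(y)|\le\mathfrak b_m y^m$; and in Step~3 one must let $m$ tend to the critical value while controlling the right-hand side. Equally delicate is the absorption in Step~2, which is what lets the argument land on the critical space $X_\alpha$ (respectively $X_\beta$) rather than only on $X_{\alpha+\varepsilon}$; and the borderline constraint $m+\gamma=\lambda$, unavoidable when $\gamma=\alpha$, which is where the argument is tightest.
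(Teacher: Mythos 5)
Your route differs from the paper's and, with some polishing, works in the case $\gamma>\alpha$; but it genuinely breaks down in the case $\gamma=\alpha$, and the fix you sketch does not close the gap.

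The paper's proof is both simpler and uniform across the two sub-cases: it uses a single test function, the \emph{truncated constant} $\zeta_{0,\delta}(x)=x\max\{x,\delta\}^{-1}$ (so $\zeta_{0,\delta}(x)=x/\delta$ for $x\le\delta$ and $\zeta_{0,\delta}(x)\equiv1$ for $x\ge\delta$), for which $-\chi_{\zeta_{0,\delta}}\ge\mathbf 1_{(\delta,\infty)^2}$ and $-N_{\zeta_{0,\delta}}\le\mathfrak b_0\mathbf 1_{(\delta,\infty)}$. Plugging this into \textbf{(s3)} gives, with no limiting argument needed first,
\[
K_0\left(\int_\delta^\infty x^\alpha\varphi\,\mathrm dx\right)\left(\int_\delta^\infty y^\beta\varphi\,\mathrm dy\right)\le a_0\mathfrak b_0\int_\delta^\infty x^\gamma\varphi\,\mathrm dx ,
\]
and all three truncated integrals are finite for $\delta>0$. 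When $\gamma=\alpha$ one simply divides by $\int_\delta^\infty x^\alpha\varphi>0$ (positive for $\delta$ small since $\varphi\not\equiv0$), getting the $\delta$-uniform bound $\int_\delta^\infty y^\beta\varphi\le a_0\mathfrak b_0/K_0$, hence $\varphi\in X_\beta$. When $\gamma>\alpha$, a Hölder interpolation (writing the exponent $\gamma$ as a convex combination of $\alpha$ and $1+\gamma$, and $1$ of $\beta$ and $2$) turns the same inequality into a $\delta$-uniform bound on $\int_\delta^\infty x^\alpha\varphi$.

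On your proposal: for $\gamma>\alpha$, testing with $(x+\delta)^m-\delta^m$, letting $\delta\to0$, localizing the coagulation integral to $\{x<\delta'\}\times\{\delta'<y<1\}$, and iterating in $m$ is viable. Two remarks. First, the ``absorption'' step is not needed during the iteration: once $m+\gamma$ exceeds the current integrability threshold, $M_{m+\gamma}(\varphi)$ is already finite and the inequality directly gives $\varphi\in X_{\alpha+m}$; the only place a separate limiting argument is required is the endpoint $m\downarrow0$, where you should invoke $M_\gamma(\varphi)<\infty$ (available after finitely many rounds since $\gamma>\alpha$) together with monotone convergence of $\int_0^{\delta'}x^{\alpha+m}\varphi$. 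Second, and more seriously, the case $\gamma=\alpha$ defeats the method: with $m=\beta-\alpha$ both $\alpha+m$ and $m+\gamma$ equal $\beta$, so your inequality tries to bound $\int_0^{\delta'}x^\beta\varphi$ by a multiple of $M_\beta(\varphi)$ itself, which is circular. Absorption cannot save it because $\delta'^{\gamma-\alpha}\equiv1$, and there is no reason for $(1-m)K_0\int_{\delta'}^1 y^\beta\varphi$ to dominate $a_0(\mathfrak b_{\beta-\alpha}-1)$; combining monotone convergence with Fatou does not remove the circularity either. The structural reason is that your test-function family $(x+\delta)^m-\delta^m$ collapses to $0$ at $m=0$, whereas what this regime really requires is a regularization of the constant function $1$ vanishing at the origin --- which is exactly what $\zeta_{0,\delta}$ provides. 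Also, the sub-case $\gamma=\alpha=\beta$, which you flag as needing extra input from \textbf{(s2)}, is in fact handled by the same one-line division, with no additional information.
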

%%%%%%%%%%%%%%%%

\begin{proof}
For $\delta\in (0,1)$, we set $\zeta_{0,\delta}(x) = x \max\{x,\delta\}^{-1}$, $x>0$. Then $\zeta_{0,\delta}\in \Theta_1$ and satisfies
\begin{align*}
& -\chi_{\zeta_{0,\delta}}(x,y) \ge \mathbf{1}_{(\delta,\infty)^2}(x,y)\ , \qquad (x,y)\in (0,\infty)^2\ , \\
& - N_{\zeta_{0,\delta}}(x) \le \mathfrak{b}_0 \mathbf{1}_{(\delta,\infty)}(x)\ , \qquad x>0\ .
\end{align*}
It then follows from Theorem~\eqref{thm1}~\textbf{(s3)} that
\begin{align}
& K_0 \left( \int_\delta^\infty x^\alpha \varphi(x)\ \mathrm{d}x \right) \left( \int_\delta^\infty y^\beta \varphi(y)\ \mathrm{d}y \right) = \frac{1}{2} \int_\delta^\infty \int_\delta^\infty K(x,y) \varphi(x) \varphi(y)\ \mathrm{d}y\mathrm{d}x \nonumber \\
& \qquad \le - \frac{1}{2} \int_0^\infty \int_0^\infty K(x,y) \chi_{\zeta_{0,\delta}}(x,y) \varphi(x) \varphi(y)\ \mathrm{d}y\mathrm{d}x \nonumber \\
& \qquad = - \int_0^\infty a(x) N_{\zeta_{0,\delta}}(x) \varphi(x)\ \mathrm{d}x \le a_0 \mathfrak{b}_0 \int_\delta^\infty x^\gamma \varphi(x)\ \mathrm{d}x\ . \label{sig1}
\end{align}

\noindent $-$ If $\gamma>\alpha$, then we infer from Theorem~\ref{thm1}~\textbf{(s2)} and H\"older's inequality that
\begin{align*}
\int_\delta^\infty x^\gamma \varphi(x)\ \mathrm{d}x & \le \left( \int_\delta^\infty x^\alpha \varphi(x)\ \mathrm{d}x \right)^{1/(1+\gamma-\alpha)} \left( \int_\delta^\infty x^{1+\gamma} \varphi(x)\ \mathrm{d}x \right)^{(\gamma-\alpha)/(1+\gamma-\alpha)} \\ 
& \le M_{1+\gamma}(\varphi)^{(\gamma-\alpha)/(1+\gamma-\alpha)} \left( \int_\delta^\infty x^\alpha \varphi(x)\ \mathrm{d}x \right)^{1/(1+\gamma-\alpha)}
\end{align*}
and
\begin{align*}
\left( \int_\delta^\infty y \varphi(y)\ \mathrm{d}y \right)^{2-\beta} & \le \left( \int_\delta^\infty y^\beta \varphi(y)\ \mathrm{d}y \right) \left( \int_\delta^\infty y^2 \varphi(y)\ \mathrm{d}y \right)^{1-\beta} \\
& \le M_2(\varphi)^{1-\beta} \int_\delta^\infty y^\beta \varphi(y)\ \mathrm{d}y\ .
\end{align*}
Combining \eqref{sig1} and the above inequalities gives
\begin{align*}
& M_2(\varphi)^{\beta-1} \left( \int_\delta^\infty y \varphi(y)\ \mathrm{d}y \right)^{2-\beta} \left( \int_\delta^\infty x^\alpha \varphi(x)\ \mathrm{d}x \right)^{(\gamma-\alpha)/(1+\gamma-\alpha)} \\
& \qquad\qquad \le \frac{a_0 \mathfrak{b}_0}{K_0} M_{1+\gamma}(\varphi)^{(\gamma-\alpha)/(1+\gamma-\alpha)}\ .
\end{align*}
Consequently,
\begin{align*}
& \left( \int_\delta^\infty y \varphi(y)\ \mathrm{d}y \right)^{(2-\beta)(1+\gamma-\alpha)/(\gamma-\alpha)} \int_\delta^\infty x^\alpha \varphi(x)\ \mathrm{d}x \\
& \qquad\qquad \le M_{1+\gamma}(\varphi) \left( \frac{a_0 \mathfrak{b}_0 M_2(\varphi)^{1-\beta}}{K_0} \right)^{(1+\gamma-\alpha)/(\gamma-\alpha)}\ .
\end{align*}
Owing to Theorem~\ref{thm1}~\textbf{(s1)} and the positivity of $\varrho$, we can take the limit $\delta\to 0$ in the previous inequality to deduce that $\varphi\in X_\alpha$.

\smallskip

\noindent$-$ If $\gamma=\alpha$, then \eqref{sig1} gives, since $\varphi\not\equiv 0$ by Theorem~\ref{thm1}~\textbf{(s1)},
\begin{equation*}
\int_\delta^\infty y^\beta \varphi(y)\ \mathrm{d}y \le \frac{a_0 \mathfrak{b}_0}{K_0}
\end{equation*}
for $\delta$ small enough, which obviously implies that $\varphi\in X_\beta$ after taking the limit $\delta\to 0$.
\end{proof}

\begin{proof}[Proof of Proposition~\ref{prop2}] First, the integrability properties~\textbf{(m2)} and~\textbf{(m3)} stated in Proposition~\ref{prop2} readily follow from Lemma~\ref{lems1} and Theorem~\ref{thm1}~\textbf{(s2)} by interpolation.

\smallskip

\noindent~\textbf{(m1)}: $\gamma>\alpha$. Consider $m\in (m_\star,0)$ and recall that $\mathfrak{b}_m\in (1,\infty)$ by \eqref{a17} and \eqref{b95b}. We first observe that, since $\gamma>\alpha$, $\beta\in [\alpha,1)$, $\varphi\in X_\alpha\cap X_{1+\gamma}$, and $\varphi\not\equiv 0$ by \eqref{a14b}, Theorem~\ref{thm1}, and Lemma~\ref{lems1}, 
\begin{equation}
0 < M_\beta(\varphi) < \infty \;\text{ and }\; M_\gamma(\varphi)<\infty\ . \label{x0}
\end{equation}
This implies that there is $\delta_0 \in (0,1)$ such that 
\begin{equation}
r_\delta := \left( \frac{K_0}{2 a_0 \mathfrak{b}_m} \int_\delta^\infty y^\beta \varphi(y)\ \mathrm{d}y \right)^{1/(\gamma-\alpha)} > \delta\ , \qquad \delta\in [0,\delta_0)\ . \label{x00}
\end{equation}
Next, for $\delta\in (0,\delta_0)$, we define the function $\zeta_{m,\delta}$ by $\zeta_{m,\delta}(x) := x \max\{x,\delta\}^{m-1}$, $x>0$, and note that $\zeta_{m,\delta}$  belongs to $\Theta_1$. Moreover, since $m<0$, 
\begin{itemize}[label=$-$]
\item for $(x,y)\in (\delta,\infty)^2$,
\begin{equation*}
- \chi_{\zeta_{m,\delta}}(x,y) = x^m + y^m - (x+y)^m \ge x^m\ ;
\end{equation*}
\item for $(x,y)\in (\delta,\infty)\times (0,\delta)$,
\begin{equation*} 
- \chi_{\zeta_{m,\delta}}(x,y) = x^m + y \delta^{m-1} - (x+y)^m \ge 0\ ;
\end{equation*}
\item for $(x,y)\in (0,\delta)\times (\delta,\infty)$,
\begin{equation*} 
- \chi_{\zeta_{m,\delta}}(x,y) = x \delta^{m-1} + y^m - (x+y)^m \ge 0\ ;
\end{equation*}
\item for $(x,y)\in (0,\delta)^2$ such that $x+y>\delta$, 
\begin{align*}
- \chi_{\zeta_{m,\delta}}(x,y) & = x \delta^{m-1} + y \delta^{m-1} - (x+y)^m \\
& \ge (x+y) \left[ \delta^{m-1} - (x+y)^{m-1} \right] \ge 0\ ;
\end{align*}
\item for $(x,y)\in (0,\delta)^2$ such that $x+y<\delta$, 
\begin{equation*}
- \chi_{\zeta_{m,\delta}}(x,y) = x \delta^{m-1} + y \delta^{m-1} - (x+y) \delta^{m-1} = 0\ .
\end{equation*}
\end{itemize}
Also, by \eqref{a15b} and \eqref{a15c},
\begin{itemize}[label=$-$]
\item for $x\in (0,\delta)$, 
\begin{equation*}
- N_{\zeta_{m,\delta}}(x) = \delta^{m-1} \int_0^x y b(y,x)\ \mathrm{d}y - \delta^{m-1} x = 0\ ;
\end{equation*}
\item for $x>\delta$,
\begin{align*}
- N_{\zeta_{m,\delta}}(x) & = \int_0^\delta y \delta^{m-1} b(y,x)\ \mathrm{d}y + \int_\delta^x y^m b(y,x)\ \mathrm{d}y - x^m \\
& \le  \int_0^x y^m b(y,x)\ \mathrm{d}y = \mathfrak{b}_m x^m\ .
\end{align*}
\end{itemize}
 We infer from Theorem~\ref{thm1}~\textbf{(s3)} and the previous inequalities that
\begin{align*}
& K_0 \int_\delta^\infty \int_\delta^\infty x^{\alpha+m} y^\beta \varphi(x) \varphi(y)\ \mathrm{d}y\mathrm{d}x \le K_0 \int_0^\infty \int_0^\infty \chi_{\zeta_{m,\delta}}(x,y) x^\alpha y^\beta \varphi(x) \varphi(y)\ \mathrm{d}y\mathrm{d}x \\
& \qquad = \frac{1}{2} \int_0^\infty \int_0^\infty \chi_{\zeta_{m,\delta}}(x,y) K(x,y) \varphi(x) \varphi(y)\ \mathrm{d}y\mathrm{d}x = a_0 \int_0^\infty x^\gamma N_{\zeta_{m,\delta}}(x) \varphi(x)\ \mathrm{d}x \\
& \qquad \le a_0 \mathfrak{b}_m \int_\delta^\infty x^{\gamma+m} \varphi(x)\ \mathrm{d}x \ .
\end{align*}
Therefore,
\begin{equation}
K_0 \left( \int_\delta^\infty y^\beta \varphi(y)\ \mathrm{d}y \right) \int_\delta^\infty x^{\alpha+m} \varphi(x)\ \mathrm{d}x \le a_0 \mathfrak{b}_m \int_\delta^\infty x^{\gamma + m} \varphi(x)\ \mathrm{d}x \ . \label{x1}
\end{equation}
Now, since $\gamma>\alpha$, it follows from \eqref{x0} and \eqref{x00} that
\begin{align*}
a_0 \mathfrak{b}_m \int_\delta^\infty x^{\gamma + m} \varphi(x)\ \mathrm{d}x & \le a_0 \mathfrak{b}_m r_\delta^{\gamma-\alpha} \int_\delta^{r_\delta} x^{\alpha + m} \varphi(x)\ \mathrm{d}x + a_0 \mathfrak{b}_m r_\delta^m \int_{r_\delta}^\infty x^\gamma \varphi(x)\ \mathrm{d}x \\
& \le a_0 \mathfrak{b}_m r_\delta^{\gamma-\alpha} \int_\delta^\infty x^{\alpha + m} \varphi(x)\ \mathrm{d}x + a_0 \mathfrak{b}_m r_\delta^m M_\gamma(\varphi)\ .
\end{align*}
Combining this inequality with \eqref{x00}  and \eqref{x1} gives
\begin{equation*}
\frac{K_0}{2} \left( \int_\delta^\infty y^\beta \varphi(y)\ \mathrm{d}y \right) \int_\delta^\infty x^{\alpha+m} \varphi(x)\ \mathrm{d}x \le a_0 \mathfrak{b}_m r_\delta^m M_\gamma(\varphi)\ .
\end{equation*}
Thanks to \eqref{x0}, we may let $\delta\to 0$ in the above inequality and use Fatou's lemma to find
\begin{equation*}
\frac{K_0 M_\beta(\varphi)}{2} \int_0^\infty x^{\alpha+m} \varphi(x)\ \mathrm{d}x \le a_0 \mathfrak{b}_m r_0^m M_\gamma(\varphi)\ .
\end{equation*}
Hence, $\varphi\in X_{\alpha+m}$ for any $m\in (m_\star,0)$ which, together with Theorem~\ref{thm1}~\textbf{(s2)} and an interpolation argument implies that $\varphi\in X_{\alpha+m}$ for any $m>m_\star$. 

To prove the second assertion in \textbf{(m1)} when $m_\star>-\infty$ and $\mathfrak{b}_{m_\star}=\infty$, we argue by contradiction and assume that $\varphi\in X_{\alpha+m_\star}$. Then, owing to \eqref{a14b} and the assumption $\gamma>\alpha$, 
\begin{equation}
\overline{M} := \max\left\{ M_{\alpha+m_\star}(\varphi) , M_{\beta+m_\star}(\varphi) , M_\alpha(\varphi) , M_\beta(\varphi) , M_{\gamma+m_\star}(\varphi) \right\}< \infty\ . \label{xx2} 
\end{equation} 
Consider next $R>1$. Since $\mathfrak{b}_{m_\star}=\infty$, there is $\delta_R\in (0,1)$ such that
\begin{equation}
\int_{\sqrt{\delta}}^1 z^{m_\star} B(z)\ \mathrm{d}z \ge R\ , \qquad \delta\in (0,\delta_R)\ . \label{xx1}
\end{equation}
Fix $\delta\in (0,\delta_R)$. It follows from the negativity of $m_\star$ and the definition of $\zeta_{m_\star,\delta}$ that
\begin{equation*}
0 \le - \chi_{\zeta_{m_\star,\delta}}(x,y) \le x^{m_\star} + y^{m_\star}\ , \qquad (x,y)\in (0,\infty)^2\ , 
\end{equation*}
and
\begin{equation*}
- N_{\zeta_{m_\star,\delta}}(x) \ge 0\ , \qquad x>0\ , 
\end{equation*}
while \eqref{xx1} entails that, for $x>\sqrt{\delta}$,
\begin{align*}
-N_{\zeta_{m_\star,\delta}}(x) & \ge \left( \int_{\delta/x}^1 z^{m_\star} B(z)\ \mathrm{d}z - 1 \right) x^{m_\star} \ge \left( \int_{\sqrt{\delta}}^1 z^{m_\star} B(z)\ \mathrm{d}z - 1 \right) x^{m_\star} \ge (R-1) x^{m_\star}\ .
\end{align*}
Since $\zeta_{m_\star,\delta}\in \Theta_1$, we infer from \eqref{xx2}, Theorem~\ref{thm1}~\textbf{(s3)}, and the previous inequalities that
\begin{align*}
a_0 (R-1) \int_{\sqrt{\delta}}^\infty x^{\gamma+m_\star} \varphi(x)\ \mathrm{d}x & \le - \int_{\sqrt{\delta}}^\infty a(x)N_{\zeta_{m_\star,\delta}}(x) \varphi(x)\ \mathrm{d}x \\
& \le - \int_0^\infty a(x)N_{\zeta_{m_\star,\delta}}(x) \varphi(x)\ \mathrm{d}x \\
& = -\frac{1}{2} \int_0^\infty \int_0^\infty K(x,y) \chi_{\zeta_{m_\star,\delta}}(x,y) \varphi(x) \varphi(y)\ \mathrm{d}y\mathrm{d}x \\
& = - K_0 \int_0^\infty \int_0^\infty x^\alpha y^\beta \chi_{\zeta_{m_\star,\delta}}(x,y) \varphi(x) \varphi(y)\ \mathrm{d}y\mathrm{d}x \\
& \le K_0 \left[ M_{\alpha+m_\star}(\varphi) M_\beta(\varphi) + M_{\beta+m_\star}(\varphi) M_\alpha(\varphi) \right] \\
& \le 2 K_0 \overline{M}^2\ .
\end{align*}
Hence, using again \eqref{xx2}, 
\begin{equation*}
a_0 R \int_{\sqrt{\delta}}^\infty x^{\gamma+m_\star} \varphi(x)\ \mathrm{d}x \le a_0 M_{\gamma+m_\star}(\varphi)+ 2 K_0 \overline{M}^2 \le a_0 \overline{M} + 2 K_0 \overline{M}^2\ .
\end{equation*} 
Taking the limit $\delta\to 0$ gives
\begin{equation*}
a_0 R M_{\gamma+m_\star}(\varphi) \le a_0 \overline{M} + 2 K_0 \overline{M}^2\ .
\end{equation*} 
The above inequality being valid for all $R>1$, we let $R\to \infty$ to conclude that $M_{\gamma+m_\star}(\varphi)=0$; that is, $\varphi\equiv 0$, which contradicts Theorem~\ref{thm1}~\textbf{(s1)}.

\smallskip

\noindent~\textbf{(m4)}: $\alpha>\gamma$. As in the proof of Lemma~\ref{lemb2}, we use a decomposition technique in the spirit of \cite[Lemma~3.1]{FoLa05} and \cite[Lemma~8.2.12]{BLLxx}, along with a truncation procedure, to estimate the contribution of the coagulation term. More precisely, for $m\in (\lambda-\gamma,\lambda)$, we deduce from \eqref{a14b} and the assumption $\alpha>\gamma>0$ that 
\begin{equation*}
0 \le \lambda - 2 \alpha < \lambda-2\gamma < m-\gamma < m < \lambda < 1\ .
\end{equation*}
We define 
\begin{equation*}
\omega:= 2/(m+\gamma-\lambda)>0 \ , \qquad y_i := i^{-\omega}\ , \qquad i\ge 1\ ,
\end{equation*}
and set $\zeta_i(x) := x \max\{ x , y_i\}^{m-\gamma}$, $x>0$, $i\ge 2$. Clearly, $\zeta_i\in \Theta_1$ for all $i\ge 2$ and we infer from the convexity and monotonicity of $x\mapsto x^{m-\gamma-1}$ that,
\begin{itemize}[label=$-$]
\item for $(x,y)\in (y_i,\infty)^2$,
\begin{align*}
- \chi_{\zeta_i}(x,y) & = x^{m-\gamma} + y^{m-\gamma} - (x+y)^{m-\gamma} \\
& = x \left[ x^{m-\gamma-1} - (x+y)^{m-\gamma-1} \right] + y \left[ y^{m-\gamma-1} - (x+y)^{m-\gamma-1} \right] \\
& \ge 2 (1+\gamma-m) xy (x+y)^{m-\gamma-2}\ ;
\end{align*}
\item for $(x,y)\in (y_i,\infty)\times (0,y_i)$,
\begin{align*}
- \chi_{\zeta_i}(x,y) & = x^{m-\gamma} + y y_i^{m-\gamma-1} - (x+y)^{m-\gamma} \\
& = x \left[ x^{m-\gamma-1} - (x+y)^{m-\gamma-1} \right] + y \left[ y_i^{m-\gamma-1} - (x+y)^{m-\gamma-1} \right] \ge 0\ ;
\end{align*}
\item for $(x,y)\in (0,y_i)\times (y_i,\infty)$,
\begin{equation*}
- \chi_{\zeta_i}(x,y) = x y_i^{m-\gamma-1} + y^{m-\gamma} - (x+y)^{m-\gamma} \ge 0\ ;
\end{equation*}
\item for $(x,y)\in (0,y_i)^2$ such that $x+y>y_i$,
\begin{align*}
- \chi_{\zeta_i}(x,y) & = x y_i^{m-\gamma-1} + y y_i^{m-\gamma-1} - (x+y)^{m-\gamma-1} \\
& \ge (x+y) \left[ y_i^{m-\gamma-1} - (x+y)^{m-\gamma-1} \right] \ge 0\ ;
\end{align*}
\item for $(x,y)\in (0,y_i)^2$ such that $x+y<y_i$,
\begin{equation*}
- \chi_{\zeta_i}(x,y) = x y_i^{m-\gamma-1} + y y_i^{m-\gamma-1} - (x+y) y_i^{m-\gamma-1} = 0\ .
\end{equation*}
\end{itemize}
Also, by \eqref{a15b} and \eqref{a15c},
\begin{itemize}[label=$-$]
\item for $x\in (0,y_i)$,
\begin{equation*}
- N_{\zeta_i}(x)  = y_i^{m-\gamma-1} \int_0^x y b(y,x)\ \mathrm{d}y - y_i^{m-\gamma-1} x = 0\ ;
\end{equation*}
\item for $x>y_i$,
\begin{align*}
- N_{\zeta_i}(x) & = \int_0^{y_i} y y_i^{m-\gamma-1} b(y,x)\ \mathrm{d}y + \int_{y_i}^x y^{m-\gamma} b(y,x)\ \mathrm{d}y - x^{m-\gamma} \\
& \le  \int_0^x y^{m-\gamma} b(y,x)\ \mathrm{d}y = \mathfrak{b}_{m-\gamma} x^{m-\gamma}\ .
\end{align*}
\end{itemize}

Let $I\ge 2$. Since 
\begin{equation*}
(xy)^{\lambda/2} \le \frac{1}{2} \left( x^\alpha y^\beta + x^\beta y^\alpha \right) = \frac{K(x,y)}{2K_0}\ , \qquad (x,y)\in (0,\infty)^2\ , 
\end{equation*}
we deduce from Theorem~\ref{thm1}~\textbf{(s3)} and the above properties of $\zeta_I$, $\chi_{\zeta_I}$, and $N_{\zeta_I}$ that
\begin{align}
& K_0 (1+\gamma-m) \int_{y_I}^\infty \int_{y_I}^\infty (xy)^{(\lambda+2)/2} (x+y)^{m-\gamma-2} \varphi(x) \varphi(y)\ \mathrm{d}y\mathrm{d}x \nonumber \\
& \qquad \le \frac{1+\gamma-m}{2} \int_{y_I}^\infty \int_{y_I}^\infty xy K(x,y) (x+y)^{m-\gamma-2} \varphi(x) \varphi(y)\ \mathrm{d}y\mathrm{d}x \nonumber \\
& \qquad \le - \frac{1}{2} \int_{y_I}^\infty \int_{y_I}^\infty K(x,y) \chi_{\zeta_I}(x,y) \varphi(x) \varphi(y)\ \mathrm{d}y\mathrm{d}x \nonumber \\
& \qquad \le - \frac{1}{2} \int_0^\infty \int_0^\infty K(x,y) \chi_{\zeta_I}(x,y) \varphi(x) \varphi(y)\ \mathrm{d}y\mathrm{d}x \nonumber \\
& \qquad = - \int_0^\infty a(x) N_{\zeta_I}(x) \varphi(x)\ \mathrm{d}x \nonumber \\
& \qquad \le a_0 \mathfrak{b}_{m-\gamma} \int_{y_I}^\infty x^m \varphi(x)\ \mathrm{d}x\ . \label{zz2}
\end{align}
Next, $(y_I,1)=\bigcup_{1\le i \le I-1} (y_{i+1},y_i)$, so that
\begin{align}
& \int_{y_I}^\infty \int_{y_I}^\infty (xy)^{(\lambda+2)/2} (x+y)^{m-\gamma-2} \varphi(x) \varphi(y)\ \mathrm{d}y\mathrm{d}x \nonumber \\ 
& \qquad \ge \int_{y_I}^1 \int_{y_I}^1 (xy)^{(\lambda+2)/2} (x+y)^{m-\gamma-2} \varphi(x) \varphi(y)\ \mathrm{d}y\mathrm{d}x \nonumber \\
& \qquad \ge \sum_{i=1}^{I-1} \int_{y_{i+1}}^{y_i} \int_{y_{i+1}}^{y_i} (xy)^{(\lambda+2)/2} (x+y)^{m-\gamma-2} \varphi(x) \varphi(y)\ \mathrm{d}y\mathrm{d}x \nonumber \\
& \qquad \ge 2^{m-\gamma-2} \sum_{i=1}^{I-1} y_i^{m-\gamma-2} J_i^2\ , \label{zz3}
\end{align}
where
\begin{equation*}
J_i := \int_{y_{i+1}}^{y_i} x^{(\lambda+2)/2} \varphi(x)\ \mathrm{d}x\ , \qquad i\ge 1\ .
\end{equation*}
Next, since $m<(\lambda+2)/2$, it follows from the Cauchy-Schwarz inequality that
\begin{align}
\int_{y_I}^1 x^m \varphi(x)\ \mathrm{d}x & = \sum_{i=1}^{I-1} \int_{y_{i+1}}^{y_i} x^m \varphi(x)\ \mathrm{d}x \le \sum_{i=1}^{I-1} y_{i+1}^{(2m-\lambda-2)/2} J_i \nonumber \\
& \le \left( \sum_{i=1}^{I-1} y_{i+1}^{2m-\lambda-2} y_i^{\gamma+2-m} \right)^{1/2} \left( \sum_{i=1}^{I-1} y_i^{m-\gamma-2} J_i^2 \right)^{1/2} \nonumber \\
& \le 2^{\omega(\lambda+2-2m)/2} \left( \sum_{i=1}^\infty \frac{1}{i^2} \right)^{1/2} \left( \sum_{i=1}^{I-1} y_i^{m-\gamma-2} J_i^2 \right)^{1/2} \ . \label{zz4}
\end{align}
We then infer from \eqref{zz3} and \eqref{zz4} that there is $c_1(m)>0$ depending only on $K_0$, $\alpha$, $\beta$, $a_0$, $\gamma$, $B$, $\varrho$, and $m$ such that
\begin{equation}
\begin{split}
& K_0 (1+\gamma-m) \int_{y_I}^\infty \int_{y_I}^\infty (xy)^{(\lambda+2)/2} (x+y)^{m-\gamma-2} \varphi(x) \varphi(y)\ \mathrm{d}y\mathrm{d}x \\
& \qquad \ge c_1(m) a_0 \mathfrak{b}_{m-\gamma} \left( \int_{y_I}^1 x^m \varphi(x)\ \mathrm{d}x \right)^2\ .
\end{split} \label{zz5}
\end{equation}
In addition, since $m<1$, we infer from Theorem~\ref{thm1}~\textbf{(s1)} that
\begin{align}
\left( \int_{y_I}^\infty x^m \varphi(x)\ \mathrm{d}x \right)^2 & \le 2  \left( \int_{y_I}^1 x^m \varphi(x)\ \mathrm{d}x \right)^2 + 2  \left( \int_1^\infty x \varphi(x)\ \mathrm{d}x \right)^2 \nonumber \\
& \le 2  \left( \int_{y_I}^1 x^m \varphi(x)\ \mathrm{d}x \right)^2 + 2 \varrho^2 \ . \label{zz6}
\end{align}
Collecting \eqref{zz2}, \eqref{zz5}, and \eqref{zz6} and using the Cauchy-Schwarz inequality, we end up with
\begin{align*}
\left( \int_{y_I}^\infty x^m \varphi(x)\ \mathrm{d}x \right)^2 & \le \frac{2}{c_1(m)} \int_{y_I}^\infty x^m \varphi(x)\ \mathrm{d}x + 2 \varrho^2 \\
& \le \frac{1}{2} \left( \int_{y_I}^\infty x^m \varphi(x)\ \mathrm{d}x \right)^2 + \frac{2}{c_1(m)^2} + 2 \varrho^2\ .
\end{align*}
Hence, 
\begin{equation*}
\int_{y_I}^\infty x^m \varphi(x)\ \mathrm{d}x \le \frac{2}{c_1(m)} \left( 1 + c_1(m)^2 \varrho^2 \right)^{1/2}\ .
\end{equation*}
The above inequality being valid for any $I\ge 2$ with a right-hand side which does not depend on $I\ge 2$, we may take the limit $I\to\infty$ to conclude that $\varphi\in X_m$ and complete the proof of Proposition~\ref{prop2}.
\end{proof}

%%%%%%%%%%%%%%%%
%%%%%%%%%%%%%%%%
%\section*{Acknowledgments}
%%%%%%%%%%%%%%%%
%%%%%%%%%%%%%%%%

%%%%%%%%%%%%%%%%
%%%%%%%%%%%%%%%%
\bibliographystyle{siam}
\bibliography{StatSolCF}
%%%%%%%%%%%%%%%%
%%%%%%%%%%%%%%%%

\end{document}